\tikzset{
    >=stealth,
    every picture/.style={thick},
    graphs/every graph/.style={empty nodes},
}
\tikzstyle{vertex}=[
\tikzstyle{printersafe}=[decoration={snake,amplitude=0pt}]
\newcommand{\codim}{\operatorname{codim}}
\newcommand{\mult}{\operatorname{mult}}
\newcommand{\Spec}{\operatorname{Spec}}
\renewcommand{\qq}{\mathbb{Q}}
\newcommand{\zz}{\mathbb{Z}}
\newcommand{\nn}{\mathbb{N}}
\newcommand{\rr}{\mathbb{R}}
\newcommand{\cc}{\mathbb{C}}
\def\O#1.{\mathcal {O}_{#1}}			
\def\pr #1.{\mathbb P^{#1}}				
\def\af #1.{\mathbb A^{#1}}				
\def\ses#1.#2.#3.{0\to #1\to #2\to #3 \to 0}		
\def\xrar#1.{\xrightarrow{#1}}			
\def\K#1.{K_{#1}}						
\def\bA#1.{\mathbf{A}_{#1}}				
\def\bM#1.{\mathbf{M}_{#1}}				
\def\bL#1.{\mathbf{L}_{#1}}				
\def\bB#1.{\mathbf{B}_{#1}}				
\def\bK#1.{\mathbf{K}_{#1}}				
\def\subs#1.{_{#1}}						
\def\sups#1.{^{#1}}						
\DeclareMathOperator{\Supp}{Supp}		
\DeclareMathOperator{\Nklt}{Nklt}	
\newcommand{\rar}{\rightarrow}		
\newcommand{\drar}{\dashrightarrow}	
  \newtheorem*{ks.princ}{
  Connectedness Principle}
  \newtheorem{theorem}{Theorem}[section]
  \newtheorem{lemma}[theorem]{Lemma}
  \newtheorem{proposition}[theorem]{Proposition}
  \newtheorem{corollary}[theorem]{Corollary}
\theoremstyle{definition}
  \newtheorem{definition}[theorem]{Definition}
  \newtheorem{example}[theorem]{Example}
\newtheorem{remark}[theorem]{Remark}
\theoremstyle{remark}
\numberwithin{equation}{section}
\newcommand{\strutt}{\mathcal{O}}
\newcommand{\AF}{\lceil A \rceil - \lfloor F \rfloor}
\begin{document}
\title{On the connectedness principle and dual complexes for generalized pairs}

\author[S. Filipazzi]{Stefano Filipazzi}
\address{EPFL, SB MATH-CAG, MA C3 625 (B\^{a}timent MA), Station 8, CH-1015 Lausanne, Switzerland}
\email{stefano.filipazzi@epfl.ch}

\author[R. Svaldi]{Roberto Svaldi}
\address{Universit\`a degli Studi di Milano, Dipartimento di Matematica ``F. Enriques'', Via Saldini 50, 20133 Milano Mi, Italy}
\address{EPFL, SB MATH-GE, Station 8, CH-1015 Lausanne, Switzerland}
\email{roberto.svaldi@unimi.it}

\subjclass[2020]{Primary 14E30.}
\keywords{Non-klt locus, generalized pairs, dual complex.}
\thanks{
Part of this work was completed during a visit of RS to the University of Utah. 
RS would like to thank the University of Utah for the hospitality and the nice working environment, and Christopher Hacon for funding his visit.
SF acknowledges support from a Graduate Research Fellowship awarded by the University of Utah. 
He was also partially supported by NSF research grants no: DMS-1801851, DMS-195252 and by a grant from the Simons Foundation; Award Number: 256202. 
RS was partially supported by Churchill College, Cambridge,  by the European Union's Horizon 2020 research and innovation programme under the Marie Sk\l{}odowska-Curie grant agreement No. 842071, and by the {\it Programma per giovani ricercatori ``Rita Levi Montalcini''}.
}

\begin{abstract}
Let $(X,B)$ be a pair, and let $f \colon X \rar S$ be a contraction with $-(\K X. + B)$ nef over $S$.
A conjecture, known as the Shokurov--Koll\'ar connectedness principle, predicts that $f \sups -1. (s) \cap \Nklt(X,B)$ has at most two connected components, where $s \in S$ is an arbitrary schematic point and $\Nklt(X,B)$ denotes the non-klt locus of $(X,B)$.
In this work, we prove this conjecture, characterizing those cases in which $\Nklt(X,B)$ fails to be connected, and we extend these same results also to the category of generalized pairs.
Finally, we apply these results and the techniques to the study of the dual complex for generalized log Calabi--Yau pairs, generalizing results of Koll\'ar--Xu and Nakamura.
\end{abstract}

\maketitle

\setcounter{tocdepth}{1}
\tableofcontents

\section{Introduction}

In birational geometry, one of the fundamental and most studied problems is the classification of singularities.
The study of singularities can be carried out either from a local viewpoint, that is, considering a germ of a normal singularity $x \in X$, or from a global one, considering a normal proper variety $Y$.
More often than not, though, we are led to consider more general frameworks: namely, in the local case we consider germs of a normal singularity $x\in X$ and an effective Weil divisor $B$ with coefficients in $[0,1]$, while in the global one, we consider log pairs $(Y, B)$, see \S~\ref{sect.gen.pairs.sings} for the precise definitions. 
The importance of this generalization is evident if, for example, one considers the Riemann--Hurwitz formula for a proper finite map, cf.~\cite{KM98}*{Prop. 5.20}, or when attempting to extend the adjunction formula to the non-Gorenstein case, cf.~\cite{Kol92}*{Chapter 16}.

From the point of view of the birational classification of algebraic varieties, and in particular, the Minimal Model Program, log canonical singularities are the broader class of interest.
Roughly speaking, log canonical singularities can be characterized by the requirement that the pullback of a locally generating top-dimensional differential form may only have poles of order at most one along the exceptional divisors of a log resolution, cf.~\S~\ref{sect.gen.pairs.sings} for a more precise definition.
It has been clear since the 1990's, through the work of Nadel and many others, that the locus of maximal singularities, that is, the set of points that are dominated by exceptional valuations along which poles of order one appear when pulling back a locally generating top-dimensional form, features very important geometrical and cohomological properties that can be used, for example, to construct and lift sections of log divisors from lower-dimensional subvarieties: this type of technique is one of the fundamental tools in the Minimal Model Program and birational geometry, at large.
Such locus where the singularities are maximal is called the non-klt locus and, therefore, it is of particular interest to study its structure.

\subsection*{Connectedness of the non-klt locus}
In this work, we prove an optimal and general structure theorem for the non-klt locus of positively curved pairs, which falls in the framework of the following connectedness principle.
\begin{ks.princ}
Let $(X,B)$ be a log canonical pair.
Let $f \colon X \rar S$ be a contraction. 
If $-(\K X.+ B)$ is $f$-nef and $f$-big, then $\Nklt(X,B)$ is connected in a neighborhood of any fiber of $f$.
\end{ks.princ}
The original version of the connectedness principle dates back  to~\cite{Kol92}*{Theorem~17.4}, which generalized a result of Shokurov,~\cite{MR1162635}*{5.7}, who proved the principle for anti-ample log canonical divisors. Many more instances and generalizations of the principle have appeared throughout the years.
Despite its simplicity, this statement has many powerful applications: for example, inversion of adjunction (see~\cite{Kol92}*{Theorem~17.6},~\cite{kawakita}, and~\cite{MR3165017} for a more recent and general version) or the fact that log canonical singularities are Du-Bois (see~\cite{KK10}), or yet again the study of the geometry and boundedness of varieties of Fano-type and complements (see~\cites{KM99, HM07, Bir16a}).
Perhaps more surprisingly, the connectedness principle has also been used to study hyperbolicity questions related to the positivity of log pairs and even foliations (see~\cites{Sva, SS}).

We work with the following setup: we consider log pairs $(X,B)$ together with a contraction $f \colon X \rar S$ such that $-(\K X. + B)$ $f$-nef. 
Defining the auxiliary class $\mathbf M _X \coloneqq -(\K X. + B)$ and considering the Cartier closure $\mathbf M$ of $\mathbf M _X$, we obtain a generalized pair $(X, B, \mathbf M)$.
This new pair has the advantage of being a Calabi--Yau generalized pair, i.e. $K_X+B+\mathbf M_X \sim_\mathbb{Q} 0$, a condition that is maintained when taking a birational contraction of the space $X$;
moreover, passing to the framework of generalized pairs does not alter the non-klt locus.
Using this data, we are reduced to studying connectedness properties of the non-klt locus of generalized pairs of Calabi--Yau type.
In this framework, we are able to fully and explicitly describe the extent to which failure of the connectedness of $f \sups -1.(s) \cap \Nklt(X,B)$ may be realized.

\begin{theorem} 
\label{main theorem}
Let $(X,B,\mathbf{M})/S$ be a generalized pair, and $f \colon X \rar S$ be a projective morphism such that $\K X. + B + \mathbf{M}_X \sim \subs \qq,f. 0$. 
Fix $s \in S$ and assume that $f \sups -1. (s)$ is connected but $f \sups -1. (s) \cap \Nklt(X,B,\mathbf{M})$ is disconnected (as $k(s)$-schemes). Then,
$f \sups -1. (s) \cap \Nklt(X,B,\mathbf M )$ has exactly two connected components. 
Moreover, taking a dlt model $\overline f \colon (\overline X, \overline B , \mathbf M) \rar S$ of $(X, B, \mathbf{M})$, then
\begin{itemize}
    \item [(1)] $(\overline X,\overline B,\mathbf{M})/S$ is generalized plt in a neighborhood of $\overline{f} \sups -1.(s)$; and
    \item[(2)] there are an \'etale morphism $(s' \in S') \rar (s \in S)$ and a projective morphism $T' \rar S'$ such that $k(s)=k(s')$ and  $(\overline X, \overline B,\mathbf M ) \times_S S'$ is birational to a standard $\pr 1.$-link over $T'$.
\end{itemize}
\end{theorem}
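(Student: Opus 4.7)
The plan is to replace $(X,B,\mathbf{M})$ by a generalized dlt model and then run a cleverly chosen MMP whose output exposes the $\mathbb{P}^1$-link structure. First, I would pass to a generalized dlt modification $\overline{f}\colon(\overline{X},\overline{B},\mathbf{M})\to S$; its existence for generalized pairs is available in the current literature. Since dlt modifications are crepant, $\Nklt(\overline{X},\overline{B},\mathbf{M})=\lfloor\overline{B}\rfloor$ surjects onto $\Nklt(X,B,\mathbf{M})$, and the fiberwise connectedness of the non-klt locus is preserved. Hence the hypothesis becomes: $\lfloor\overline{B}\rfloor\cap\overline{f}^{-1}(s)$ is disconnected while $\overline{f}^{-1}(s)$ is connected.

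Second, I would exploit the generalized Calabi--Yau identity $K_{\overline{X}}+\overline{B}+\mathbf{M}_{\overline{X}}\sim_{\mathbb{Q},S}0$ to rewrite $-\lfloor\overline{B}\rfloor\sim_{\mathbb{Q},S}K_{\overline{X}}+\{\overline{B}\}+\mathbf{M}_{\overline{X}}$, where now $(\overline{X},\{\overline{B}\},\mathbf{M})$ is generalized klt. I then run a $(K_{\overline{X}}+\{\overline{B}\}+\mathbf{M}_{\overline{X}})$-MMP over $S$ with scaling; by the generalized Cone and Contraction theorems, this is simultaneously a $-\lfloor\overline{B}\rfloor$-MMP, so the components of $\lfloor\overline{B}\rfloor$ meeting $\overline{f}^{-1}(s)$ cannot be contracted divisorially. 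Because $\lfloor\overline{B}\rfloor$ is nonzero and effective over $s$, the MMP cannot terminate with $-\lfloor\overline{B}\rfloor$ nef in a neighborhood of $s$, so it must end in a generalized Mori fiber space $g\colon\overline{X}'\to T'$ over (an open neighborhood of $s$ in) $S$.

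Third, I analyze the geometry of the Mori fibration. For a general fiber $F$ of $g$ one has $-\lfloor\overline{B}\rfloor\cdot F>0$, forcing $F\cong\mathbb{P}^1$ and $\lfloor\overline{B}\rfloor\cdot F\in\{1,2\}$. Generalized adjunction along a component of $\lfloor\overline{B}\rfloor$ produces a generalized log Calabi--Yau structure of one dimension lower, and an induction on dimension (using the lower-dimensional case of the connectedness principle itself) rules out the case $\lfloor\overline{B}\rfloor\cdot F=1$, since that would contradict the disconnectedness above $s$. Thus $\lfloor\overline{B}\rfloor|_{\overline{X}'}$ consists of exactly two prime sections of $g$, and after an étale base change $S'\to S$ which splits the potential monodromy swapping them, I recover the standard $\mathbb{P}^1$-link over $T'$, yielding conclusion (2) and the exact count of two components.

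Finally, statement (1) is extracted by transporting the structure back through the MMP: the two components of $\lfloor\overline{B}\rfloor$ on $\overline{X}'$ are disjoint normal divisors, and every MMP step is an isomorphism in codimension one along $\lfloor\overline{B}\rfloor$, so the same disjointness and normality persists in a neighborhood of $\overline{f}^{-1}(s)$, which is precisely the generalized plt condition for $(\overline{X},\overline{B},\mathbf{M})$ there. The main obstacle is controlling the MMP in this generalized setting: ensuring termination, verifying that no step merges or breaks connected components of $\lfloor\overline{B}\rfloor$ over $s$, and arranging scaling so that the final birational equivalence to a $\mathbb{P}^1$-link is valid in a neighborhood of the fiber rather than only generically. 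The inductive use of the lower-dimensional connectedness statement and the careful bookkeeping of the moduli part $\mathbf{M}$ under adjunction are the delicate technical ingredients.
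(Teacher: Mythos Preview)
The central gap is in your second step. You assert that the $(K_{\overline X}+\{\overline B\}+\mathbf M_{\overline X})$-MMP over $S$ must end in a Mori fiber space because ``$\lfloor\overline B\rfloor$ is nonzero and effective over $s$''. This does not follow. The MMP terminates with a Mori fiber space only if $K_{\overline X}+\{\overline B\}+\mathbf M_{\overline X}\sim_{\qq,S}-\lfloor\overline B\rfloor$ fails to be pseudo-effective over $S$, which requires that some component of $\lfloor\overline B\rfloor$ \emph{dominate} $S$. If every component of $\lfloor\overline B\rfloor$ is vertical, then on the generic fiber $-\lfloor\overline B\rfloor$ restricts to $0$, the divisor is pseudo-effective over $S$, and the MMP ends in a minimal model with no Mori fiber space to analyze. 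Ruling out this vertical case is exactly the content of Proposition~\ref{key prop} in the paper, and it is the technical heart of the whole argument: one applies the generalized canonical bundle formula (Theorem~\ref{generalized canonical bundle formula}) to induce a generalized pair on $S$, passes to a generalized dlt model of $S$, and runs several auxiliary MMPs to contract the ``residual'' vertical divisors connecting the components of $\Nklt$, eventually contradicting the birational invariance of the number of connected components (Proposition~\ref{p.conn.gen.pairs}). You have skipped this entirely.

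There is also a smaller gap in your derivation of (1). Showing that the two horizontal sections on the Mori fiber space model are disjoint normal divisors is not yet generalized plt: you must also exclude \emph{non-divisorial} generalized log canonical centers coming from $\mathbf M$. The paper does this with a separate argument, scaling the nef part to $(X_n,B_n,\alpha\mathbf M)$, taking a dlt model, and running one more MMP to reach a contradiction. Your appeal to ``isomorphism in codimension one along $\lfloor\overline B\rfloor$'' does not address this, and the induction you invoke in step three is not actually needed once one knows a horizontal component exists: the paper argues directly that any second component of $\Nklt$ over $s$ must also be $h$-ample, hence horizontal, forcing the $\pr 1.$-bundle picture.
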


After the completion of this work, we learnt that Birkar has also obtained a similar statement to Theorem~\ref{main theorem} using different techniques,~\cite{B20}.
\\
In dimension 2, Shokurov proved that Theorem~\ref{main theorem} for log Calabi--Yau pairs, i.e., when $\K X. + B$ is numerically trivial,~\cite{Kol92}*{Proposition~12.3.1}; Prokhorov then settled the theorem in general for log canonical of dimension 2,~\cite{Pro01}*{\S~3.3}.
In the log Calabi--Yau case, a version of the theorem was proven by Fujino in dimension 3,~\cite{Fuj00}, and later Koll\'ar and Kov\'acs extended it in any dimension,~\cite{KK10}. 
More recently, Hacon and Han,~\cite{HH19}, proved a weaker version of Theorem~\ref{main theorem} conditionally to termination of flips or to the dimension of $X$ being at most 4.
Our approach is rather different than the most recent results of~\cite{HH19}, and our main insight is the adoption of the language of generalized pairs, cf.~\S~\ref{sect.gen.pairs.sings}, together with the establishment of a canonical bundle formula for these pairs, see Theorem~\ref{generalized canonical bundle formula}.

The statement of Theorem~\ref{main theorem} is sharp, in that none of its hypotheses may be weakened.
We remark that we do not assume any hypothesis on the singularities of the generalized pair $(X, B, \mathbf M)$.
Properties (1)-(2) in the above statement imply that, when $\Nklt(X,B,\mathbf M )$ is disconnected in a neighborhood of $f^{-1}(s)$, then $(X, B, \mathbf M)$ is actually generalized log canonical and, \'etale locally around $s$, the pair admits exactly two disjoint lc centers that coincide with the two connected components of $\Nklt(X,B,\mathbf M )$ in a neighborhood of $f^{-1}(s)$.
Furthermore, easy examples show that, in order to conclude that properties (1)-(2) hold in the statement of Theorem~\ref{main theorem}, we may be forced to pass to a dlt model of $(X, B, \mathbf M)$, see~\S~\ref{gen.dlt.mod.ssect} for the definition and existence of dlt models.

\begin{example}
Let us consider $X \coloneqq \mathbb{P}^2$ (while $S$ will be a point in this example and $M=0$) with homogeneous coordinates, $X_0, X_1, X_2$ and let $B \coloneqq \frac 2 3 L_0 + \frac 2 3 L_1 + \frac 2 3 D + L_2$, where $L_i \coloneqq \{X_i=0\}$ and $D \coloneqq \{X_1+X_2=0\}$.
Then $(\mathbb P^2, B)$ is log canonical, but not dlt, $K_{\mathbb P^2} +B \sim_\mathbb Q 0$, and its lc centers are the point $[0:0:1]$ and the line $\{X_2=0\}$. 
Hence, $\Nklt(X, B)$ is disconnected, but it is not divisorial -- which shows that $(X, B)$ is not dlt.
To obtain a dlt model $(\overline X, \overline B)$ of $(X, B)$, it suffices to blow up the point $[0:0:1]$. 
Thus, in this case, $\overline X=F_1$ and $\overline{B} = \{0\} \times \mathbb P^1 + \{\infty\} \times \mathbb P^1 + \frac 2 3 (\mathbb P^1 \times \{0\} + \mathbb P^1 \times \{1\} + \mathbb P^1 \times \{\infty\})$, which immediately shows how the projection of $X'$ to the second factor endows the pair $(\overline X, \overline B)$ with a $\mathbb{P}^1$-link structure over $\mathbb{P}^1$.  
\end{example}

The notion of standard $\pr 1.$-link mentioned in (2) of the above theorem is an adaptation to the framework of generalized pairs of the following fundamental example.
It is recalled and defined precisely in the context of generalized pairs in \S~\ref{subsect P1 link}.
\begin{example}
\label{example:p1.link.intro}
Let $(T,\Delta)$ be a klt pair.
Then, the pair $(\pr 1. \times T,\lbrace 0 \rbrace \times T + \lbrace \infty \rbrace \times T + \pr 1. \times \Delta)$ together with the morphism $f \colon \pr 1. \times T \rar T$ is a standard $\pr 1.$-link.
Notice that $f \sups -1. (t) \cap \Nklt(\pr 1. \times T,\lbrace 0 \rbrace \times T + \lbrace \infty \rbrace \times T + \pr 1. \times \Delta)$ has two connected components for every $t \in T$, each one corresponding to one of the two distinguished sections of $f$.
\end{example}

\subsection*{$\mathbb{P}^1$-links} The content of Theorem~\ref{main theorem} can be used inductively to study the combinatorics of the log canonical centers of a dlt pair $(X,B)$ with $-(\K X. + B)$ $f$-nef for some contraction $f \colon X \rar S$.
More precisely, we obtain the following statement, generalizing the content of~\cite{Kol13}*{Theorem~4.40}.

\begin{theorem} 
\label{Theoremp1 link}
Let $(X,B, \mathbf{M})/S$ be a generalized dlt pair, and $f \colon X \rar S$ be a projective morphism such that $\K X. + B + \mathbf M _X \sim \subs \qq,f. 0$.
Fix $s \in S$ such that $f \sups -1. (s)$ is connected.
Let $Z \subset X$ be minimal (with respect to inclusion) among the generalized log canonical centers such that $s \in f (Z)$.
Let $W$ be a generalized log canonical center such that $s \in f (W)$.
Then, there exists a generalized log canonical center $Z_W \subset W$ such that $Z$ and $Z_W$ are $\pr 1.$-linked and $s \in f(Z_W)$.
In particular, all the minimal (with respect to inclusion) generalized log canonical centers $Z_i \subset Z$ such that $s \in f (Z_i)$ are $\pr 1.$-linked to each other.
\end{theorem}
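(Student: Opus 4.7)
My plan is to prove the theorem by induction on $\dim X$; the base case $\dim X = 0$ is vacuous. For the inductive step, assume the theorem holds for all generalized dlt pairs of dimension strictly less than $\dim X$.

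The key reduction, which I isolate as Step A, is the following: \emph{any two minimal generalized log canonical centers $Z_1, Z_2$ of $(X, B, \mathbf{M})$ with $s \in f(Z_i)$ are $\mathbb{P}^1$-linked}. This is the point at which Theorem~\ref{main theorem} enters. Following the tie-breaking strategy employed in the proof of~\cite{Kol13}*{Theorem~4.40}, suitably adapted to the generalized setting, I would construct a crepant birational model $(X', B', \mathbf{M}')$ of $(X, B, \mathbf{M})$ over $S$ whose non-klt locus over a neighborhood of $s$ decomposes as the disjoint union of a small neighborhood of (the birational transform of) $Z_1$ and a small neighborhood of $Z_2$. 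Theorem~\ref{main theorem} then says that, after passing to an \'etale cover of the base around $s$, this modified pair is birational to a standard $\mathbb{P}^1$-link whose two distinguished sections realize the desired $\mathbb{P}^1$-linking between $Z_1$ and $Z_2$.

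With Step A granted, I handle an arbitrary glc center $W$ with $s \in f(W)$ as follows. If $W = X$, then $Z_W := Z$ satisfies all requirements. If $W \subsetneq X$, generalized dlt-ness provides a divisorial glc center $D$ of $(X, B, \mathbf{M})$ containing $W$, because every glc center of a generalized dlt pair is an irreducible component of an intersection of divisorial glc centers. Generalized adjunction, as developed earlier in the paper, endows $D$ with a generalized dlt structure $(D, B_D, \mathbf{M}_D)$ with $K_D + B_D + \mathbf{M}_D \sim_{\mathbb{Q}, f|_D} 0$, whose glc centers are exactly the glc centers of $(X, B, \mathbf{M})$ contained in $D$. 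Choose a minimal glc center $Z_D$ of $(D, B_D, \mathbf{M}_D)$ over $s$; by the above correspondence, $Z_D$ is automatically minimal as a glc center of $(X, B, \mathbf{M})$ over $s$ as well. Since $\dim D < \dim X$, the induction hypothesis applies to $(D, B_D, \mathbf{M}_D)$ with minimal center $Z_D$ in the role of $Z$, producing $Z_W \subset W$ a glc center with $s \in f(Z_W)$ that is $\mathbb{P}^1$-linked to $Z_D$. Combining with Step A (which gives that $Z$ and $Z_D$ are $\mathbb{P}^1$-linked) and transitivity of the $\mathbb{P}^1$-linking relation, I conclude.

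The principal obstacle is Step A: the tie-breaking construction must be executed carefully, as the nef part $\mathbf{M}$ is fixed and cannot be perturbed as freely as the boundary $B$. One must produce a crepant modification that isolates exactly the two prescribed minimal glc centers as the two connected components of the non-klt locus, without disturbing the relative Calabi--Yau condition required by Theorem~\ref{main theorem}. A further technical point is verifying compatibility of $\mathbb{P}^1$-linking with crepant birational modifications and with generalized dlt adjunction, so that links established at one step of the induction persist through the next.
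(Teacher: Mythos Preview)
Your overall induction on $\dim X$ and the treatment of a general $W$ via adjunction to a divisorial center $D \supset W$ are on the right track, but Step~A has a genuine gap. The proposed tie-breaking is incompatible with the hypotheses of Theorem~\ref{main theorem}. A \emph{crepant} birational modification preserves the connected components of the non-klt locus over $s$ (this is exactly Proposition~\ref{p.conn.gen.pairs}), so if $\Nklt(X,B,\mathbf M)$ is connected over $s$ you cannot disconnect it on any crepant model; in particular you cannot isolate two chosen minimal centers as separate components by crepant means. Classical tie-breaking, on the other hand, perturbs the boundary and is \emph{not} crepant, so the condition $K_X + B + \mathbf M_X \sim_{\qq, f} 0$ is destroyed and Theorem~\ref{main theorem} no longer applies. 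Even setting this aside, the conclusion of Theorem~\ref{main theorem} in the disconnected case is that the pair is generalized plt with exactly two log canonical centers, which is incompatible with a pair that started with three or more minimal centers over $s$. (Incidentally, the proof of \cite{Kol13}*{Theorem~4.40} does not use tie-breaking.)

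The paper's argument avoids this entirely. It splits on whether $f^{-1}(s) \cap \lfloor B \rfloor$ is connected. If disconnected, Theorem~\ref{main theorem} applies directly and there are only two centers, already $\pr 1.$-linked. If connected, one writes $\lfloor B \rfloor = \sum D_i$ and, after an \'etale base change making each $D_i$ have connected fiber over $s$, finds a chain $D_1, \dots, D_r$ with $Z \subset D_1$, $W \subset D_r$, and $D_i \cap D_{i+1} \cap f^{-1}(s) \neq \emptyset$. One then applies the inductive hypothesis inside each $D_i$ to propagate the linkage step by step from $Z$ to a center $Z_W \subset W$; Remark~\ref{remark minimality} guarantees minimality is preserved along the chain. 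A second step removes the \'etale base change. Thus Step~A and the general case are proved simultaneously by this chain argument, not via a separate tie-breaking input.
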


In~\cite{HH19}, Hacon and Han proved a similar statement for log pairs, conditionally to termination of flips or to the condition $\dim (X) \leq 4$.

We illustrate the idea behind Theorem~\ref{Theoremp1 link} with an example.
\begin{example}
Consider the pair $(X, B)$, where $X=\pr 1 . \times \pr 1.$ and $B \coloneqq B_1 + B_2 + B_3 + B_4$ is a toric invariant divisor whose irreducible components are the $B_i$.
Then, the pair $(X,B)$ is dlt with $\K X. + B \sim 0$ and Theorem~\ref{Theoremp1 link} applies.
The minimal log canonical centers of the pair $(X,B)$ are the four toric invariant points given by the intersections of the $B_i$.
For every $i= 1, \ldots, 4$, $(B_i,(B-B_i)| \subs D_i.) = (B_i, p_{i, 0} + p_{i, 1}) \simeq (\pr 1.,\lbrace 0 \rbrace + \lbrace \infty \rbrace)$.
For any $i$, we say that the points $p_{i, 0}, p_{i, 1}$ are directly $\pr 1.$-linked, as they lie on the same lc center.
Not all of the $p_{i, j}$, $i=1, \dots, 4$, $j = 0, 1$ are directly $\pr 1.$-linked as we can choose a pair of them that do not lie on the same curve $B_i$.
Nonetheless, the property of being $\pr 1.$-linked is an equivalence relation and so we can partition the set of four points into the orbits of this equivalence relation. 
In the case of $\pr 1 . \times \pr 1.$, and more generally for the case of toric pairs, it is immediate to see that the conclusion of Theorem~\ref{Theoremp1 link} holds at once: namely, all $p_{i, j}$ are $\pr 1.$-linked.
\end{example}

\subsection*{Dual complex for generalized log Calabi--Yau pairs} 

The dual complex of singularities $\mathcal{DMR}(X,B)$ of a log canonical pair $(X,B)$ is a PL-homeomorphism equivalence class of CW-complexes encoding combinatorial information about the strata of $\Nklt(X,B)$.
Given a log resolution $\pi \colon Y \rar X$ of $(X,B)$, it is possible to construct a CW-complex $\mathcal{D}(B_Y \sups =1.)$ whose cells are in correspondence with the intersections of the irreducible components of the simple normal crossing variety $B_Y \sups =1.$ containing all prime divisors of $Y$ along which $K_X+B$ has poles of order one, see \S~\ref{dual.compl.gen.ssect} for a rigorous definition.
By work of de Fernex, Koll\'ar, and Xu,~\cite{dFKX}, the PL-homeomorphism type of $\mathcal{D}(B_Y \sups =1.)$ is independent of the choice of log resolution $\pi \colon Y \rar X$.
In~\cite{KX16}, Koll\'ar and Xu studied the dual complex of log canonical pairs $(X,B)$ with $\K X. + B \sim_\qq 0$, proving that the PL-homeomorphism class $\mathcal{DMR}(X,B)$ of the dual complex of $(X, B)$ admits as a representative an equidimensional complex and it satisfies
\begin{align*}
H^i(\mathcal{DMR}(X,B),\qq)=0, \ \text{for} \ 0 < i < \dim(\mathcal{DMR}(X,B)).
\end{align*}
Furthermore, they described sufficient conditions for the contractibility of $\mathcal{DMR}(X,B)$.

In this paper, we study the dual complex and its topological and cohomological features for log canonical pairs $(X,B)$ with $-(\K X. + B)$ nef, once again, by translating this problem into the analogous one for log Calabi--Yau generalized pairs with log canonical singularities.
Theorems~\ref{main theorem} and~\ref{Theoremp1 link} provide us with powerful tools to extend the results of~\cite{KX16} to this much wider context.
\begin{theorem}
\label{dual.main.thm}
Let $(X, B, \mathbf{M})$ be a generalized pair with log canonical singularities. 
Assume that $K_X+B+M \sim_{\mathbb{Q}} 0$.
Then the dual complex $\mathcal{DMR}(X,B, \mathbf M )$ is an equidimensional pseudomanifold (with boundary). 
Moreover, exactly one of the following condition holds:\begin{enumerate}
    \item $\mathcal{DMR}(X,B, \mathbf M )$ is disconnected and it only contains two points;
    \item $\mathcal{DMR}(X,B, \mathbf M)$ is connected and collapsible to a point;
    \item 
    $\mathcal{DMR}(X,B, \mathbf M)$ is connected, non-collapsible, and 
    \begin{align*}
    H^i(\mathcal{DMR}(X,B, \mathbf M), \mathbb Q)=0, 
    \ \text{for} 
    \ 0 < i < \dim \mathcal{DMR}(X,B, \mathbf M).
    \end{align*}
    \end{enumerate}
\end{theorem}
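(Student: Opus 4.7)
The plan is to adapt the strategy of Koll\'ar--Xu~\cite{KX16} to the framework of generalized pairs, using Theorems~\ref{main theorem} and~\ref{Theoremp1 link} as the key new inputs. First, I would replace $(X,B,\mathbf{M})$ by a generalized dlt modification; this does not alter the PL-homeomorphism class of $\mathcal{DMR}(X,B,\mathbf{M})$, so one may assume $(X,B,\mathbf{M})$ is generalized dlt. Then the strata of $\lfloor B \rfloor$ coincide with the generalized lc centers and can be used to compute the dual complex directly.

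For the pseudomanifold and equidimensionality claims, I would argue by induction on $\dim X$. Each codimension-one face of $\mathcal{D}(\lfloor B\rfloor)$ corresponds to a curve generalized lc center $C$; generalized adjunction produces $(C, B_C, \mathbf{M}_C)$ with $K_C + B_C + \mathbf{M}_C \sim_{\mathbb{Q}} 0$, and Theorem~\ref{main theorem} applied to $C \to \mathrm{pt}$ bounds the number of points of $B_C$ of coefficient $\geq 1$ by two, giving the pseudomanifold property. Equidimensionality follows because, for any stratum $Z$, generalized adjunction yields a generalized dlt Calabi--Yau pair on $Z$ of smaller dimension, whose dual complex---a face of $\mathcal{DMR}$---contains a vertex by the inductive hypothesis.

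For the trichotomy, the disconnected case is resolved at once by Theorem~\ref{main theorem}: up to an \'etale base change and birational equivalence, $(X,B,\mathbf{M})$ is a standard $\mathbb{P}^1$-link over a klt base $(T, \Delta)$, and each of the two sections is klt after adjunction, so each is a minimal stratum with no further substrata. Hence $\mathcal{DMR}$ consists of two isolated vertices, which is case~(1). For the connected case, the plan is to run a $(K_X + B^{<1} + \mathbf{M}_X)$-MMP; since $K_X + B + \mathbf{M}_X \sim_{\mathbb{Q}} 0$, this is equivalent to an MMP maximizing $\lfloor B \rfloor$, and as in~\cite{dFKX} each divisorial contraction or flip alters $\mathcal{DMR}$ only by a collapse or subdivision preserving PL-type, provided one verifies the generalized analogues of the relevant adjunction and extraction statements.

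The MMP terminates in two possible ways. If it ends at a Mori fibration, Theorem~\ref{Theoremp1 link} identifies all minimal generalized lc centers in a general fiber as mutually $\mathbb{P}^1$-linked; iteratively collapsing these $\mathbb{P}^1$-links exhibits $\mathcal{DMR}$ as collapsible, giving case~(2). Otherwise, it ends at a model on which $K + B^{<1} + \mathbf{M}$ is nef, so $-\lfloor B \rfloor$ is nef and restricts trivially to each of its irreducible components; the spectral sequence argument of~\cite{KX16} then yields the vanishing $H^i(\mathcal{DMR},\mathbb{Q}) = 0$ for $0 < i < \dim \mathcal{DMR}$, giving case~(3). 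The main obstacle I expect is checking that MMP steps for generalized pairs commute with formation of the dual complex, and that the Koll\'ar--Xu cohomology vanishing extends intact to the generalized setting; both rely on the good behaviour of the $\mathbf{M}$-part under adjunction and on the generalized canonical bundle formula (Theorem~\ref{generalized canonical bundle formula}).
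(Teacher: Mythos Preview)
Your treatment of equidimensionality, the pseudomanifold property, and the disconnected case is sound and close in spirit to the paper's (the paper packages equidimensionality into Theorem~\ref{dual complex equidimensional} via Theorem~\ref{Theoremp1 link}, but your adjunction-plus-induction argument works equally well).

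The genuine gap is in your handling of the connected case. Your dichotomy does not exist: since $K_X+B+\mathbf{M}_X\sim_\qq 0$, we have $K_X+B^{<1}+\mathbf{M}_X\sim_\qq -B^{=1}$, which is anti-effective and hence never pseudo-effective once $B^{=1}\neq 0$. So the $(K_X+B^{<1}+\mathbf{M}_X)$-MMP \emph{always} terminates in a Mori fiber space, and your ``otherwise nef'' branch---which was to supply the cohomology vanishing in case~(3)---never occurs. Conversely, your Mori-fiber branch overreaches: Theorem~\ref{Theoremp1 link} says minimal lc centers are mutually $\pr 1.$-linked, but this does \emph{not} collapse the whole dual complex (if it did, case~(3) would be empty). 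So as written you have no mechanism that distinguishes collapsible from non-collapsible, and no argument at all for the rational cohomology vanishing.

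The paper's route around this is the content of \S\ref{red.ample.sect}--Corollary~\ref{corollary reduce to pairs}. One first builds, via Theorem~\ref{main technical thm for complexes} and Corollary~\ref{corollary reduction to fibers}, a crepant model $\tilde q\colon \tilde X\to Z$ on which $\tilde B^{=1}$ fully supports a $\tilde q$-big and $\tilde q$-semi-ample divisor and every lc center dominates $Z$; by Lemma~\ref{lemma KX} one then has $\mathcal{DMR}(X,B,\mathbf M)\simeq \mathcal{DMR}(\tilde X_z,\tilde B_z,\mathbf M|_{\tilde X_z})/G$ for a general fiber and a finite group $G$. The key step you are missing is Corollary~\ref{corollary reduce to pairs}: if $\mathcal{DMR}$ is \emph{not} collapsible, one runs a further $(K_{\tilde X}+\tilde B^{=1})$-MMP over $Z$ and shows, using~\cite{KX16}*{Proposition~24} and~\cite{dFKX}*{Theorem~3}, that collapsibility would follow unless $\tilde B_\eta=\lfloor \tilde B_\eta\rfloor$ and $\mathbf M|_{\tilde X_\eta}\equiv 0$. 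Thus in the non-collapsible case the generalized pair on the general fiber is an honest log Calabi--Yau pair, and the cohomology vanishing is inherited from~\cite{KX16}. This reduction-to-pairs step is the essential idea for case~(3); your proposal should replace the nonexistent ``nef'' branch with it.
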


In order to prove Theorem~\ref{dual.main.thm} we show that the computation of the dual complex of a generalized log canonical pair can be reduced to the classical case of log pairs, in the non-collapsible case.
Under this hypothesis, we show that in general the PL-homeomorphism class $\mathcal{DMR}(X,B, \mathbf M)$ admits as a representative a finite quotient of the dual complex obtained by adjunction to a general fibre of the morphism $\tilde{q} \colon \tilde{X} \rar Z$ constructed in Corollary~\ref{corollary reduce to pairs} -- here, $\tilde X$ is a birational model of $X$ crepant for the generalized pair $(X, B, \mathbf M)$. 
Moreover, using that $\mathcal{DMR}(X,B, \mathbf M)$ is non-collapsible, we show that, upon restricting to the general fibre $F$ of $\tilde q$, the moduli part $\mathbf M$ becomes $0$ on $F$ and we can invoke the results of~\cite{KX16}. 

Recent work of Nakamura,~\cite{Nak19}, extends the construction of dual complex also to the category of log pairs with singularities worse than log canonical.
Nakamura, ~\cite{Nak19}*{Theorem~1.1}, showed that the dual complex of a log pair $(X, B)$ is collapsible provided that $-(\K X. + B)$ is nef and big, without any assumption on the singularities of $(X,B)$.
Using the techniques of~\cite{Nak19} together with the ideas used in the proof of Theorem~\ref{dual.main.thm}, we obtain the following theorem generalizing Nakamura's result.

\begin{theorem} 
\label{thm non lc}
Let $(X, B, \mathbf{M})$ be a generalized pair with singularities worse than log canonical.
Assume that $K_X+B + \mathbf M _X \sim_{\mathbb{Q}} 0$.
Then the dual complex $\mathcal{DMR}(X,B, \mathbf M)$ is collapsible.
\end{theorem}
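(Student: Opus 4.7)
The plan is to adapt Nakamura's proof of \cite{Nak19}*{Theorem~1.1} to the setting of generalized pairs, systematically substituting the generalized connectedness principle of Theorem~\ref{main theorem} for the classical Koll\'ar--Shokurov statement, and combining this with the reduction-to-log-pairs technique used in the proof of Theorem~\ref{dual.main.thm}. As a first step, we would take a $\qq$-factorial generalized dlt modification $\pi \colon \tilde X \to X$ of $(X, B, \mathbf{M})$; by standard properties of such modifications, this step does not alter the PL-homeomorphism class $\mathcal{DMR}(X, B, \mathbf{M})$. On $\tilde X$ we decompose the boundary as $B_{\tilde X} = B^{\leq 1} + B^{>1}$ according to the size of the coefficients. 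Since $(X, B, \mathbf{M})$ has singularities worse than log canonical, $B^{>1} \neq 0$, and the relation $K_X + B + \mathbf{M}_X \sim_\qq 0$ yields
\[
-(K_{\tilde X} + B^{\leq 1} + \mathbf{M}_{\tilde X}) \sim_\qq B^{>1} \geq 0,
\]
so the auxiliary generalized lc pair $(\tilde X, B^{\leq 1}, \mathbf{M})$ has its anti-log-canonical class $\qq$-linearly equivalent to a nonzero effective divisor supported on the non-lc locus of the original pair.

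We would then execute the Nakamura-style strategy by running a $(K_{\tilde X} + B^{\leq 1} + \mathbf{M}_{\tilde X})$-MMP, equivalently a $B^{>1}$-positive MMP, over the base. Under this MMP, the combinatorial complex $\mathcal{D}(B^{=1})$ is successively simplified by elementary collapses, each step corresponding to the removal of a free face triggered by an extremal contraction or flip. The geometric content of each move is controlled by analyzing the fibres of the relevant extremal contraction: here Theorem~\ref{main theorem} replaces the classical connectedness principle and shows that the non-klt locus inside such a fibre is either connected, producing a genuine collapse, or splits into exactly two connected components fitting into a standard $\pp^1$-link, in which case Theorem~\ref{Theoremp1 link} dictates how the strata of $B^{=1}$ glue and again yields an elementary collapse of the complex. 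When it is convenient to sidestep the MMP, one may instead pass to the fibration $\tilde q \colon \tilde X \to Z$ provided by Corollary~\ref{corollary reduce to pairs}, arising from the generalized canonical bundle formula of Theorem~\ref{generalized canonical bundle formula}, and then apply \cite{Nak19}*{Theorem~1.1} on a general fibre.

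The principal difficulty is the control of the nef b-divisor $\mathbf{M}$ under the MMP: unlike in the case of ordinary log pairs, the generalized setting requires that $\mathbf{M}$ descend compatibly under each contraction or flip, and that its contribution to $K_{\tilde X} + B^{\leq 1} + \mathbf{M}_{\tilde X}$ never obstruct the identification of a free face in the simplicial complex. A related, more technical, obstacle is to handle the trace of $\mathbf{M}$ on the general fibre of $\tilde q$: whereas in Theorem~\ref{dual.main.thm} non-collapsibility was leveraged to force $\mathbf{M}|_F = 0$, here one must exploit instead the worse-than-log-canonical hypothesis and the effectivity of $B^{>1}$ to achieve an analogous vanishing, or, failing that, to absorb the contribution of $\mathbf{M}|_F$ into the boundary of the resulting log pair without destroying the dual complex.
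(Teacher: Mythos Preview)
Your broad outline --- pass to a $\qq$-factorial generalized dlt model, write $\Delta = B \wedge \Supp(B)$ so that $\K X. + \Delta + \bM X. \sim_\qq -(B-\Delta)$ is not pseudo-effective, and run a $(\K X. + \Delta + \bM X.)$-MMP terminating in a Mori fiber space --- matches the paper's skeleton. However, the mechanism you propose for tracking the dual complex along the MMP is not the one that works, and the difficulties you flag about the descent of $\bM.$ are symptoms of this.

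The paper does \emph{not} invoke Theorem~\ref{main theorem} or Theorem~\ref{Theoremp1 link} at any point in this argument, and the individual MMP steps do \emph{not} produce elementary collapses of the complex. The key device is instead that the MMP is run \emph{with scaling of an ample divisor} $H$: for $0<\epsilon\ll 1$ the same sequence of contractions and flips is also a $(\K X. + \Delta + \bM X. + \epsilon H)$-MMP. One then proves (Lemma~\ref{aux.dlt.lemma1}) that for such $\epsilon$ there exists an effective $\Gamma_\epsilon \sim_\qq \bM X. + \epsilon H$ with $(X,\Delta+\Gamma_\epsilon)$ an \emph{ordinary} dlt pair having the same dual complex and the same non-klt locus as $(X,\Delta,\bM.)$. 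This is precisely the step that absorbs $\bM.$ into the boundary and dissolves the descent issue you raised. Each step of the MMP now becomes a step of a classical dlt MMP which is positive for some component of $\Delta^{=1}$ (since $(B_i-\Delta_i)\cdot R_i>0$), so \cite{dFKX}*{Theorem~19} applies directly and shows that the simple homotopy type of $\mathcal D(B_i^{\geq 1})$ is \emph{preserved} along the run. Collapsibility is then obtained in one stroke at the terminal model: as $\bM X_n.$ is pseudo-effective, $-(\K X_n. + \Delta_n)$ is $g$-ample for the Mori fiber space $g\colon X_n\to Z$, and \cite{Nak19}*{Lemma~3.1} gives that $\mathcal D(\Delta_n^{=1})$ is collapsible.

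Two further remarks. First, the route via Corollary~\ref{corollary reduce to pairs} is designed for the generalized \emph{log canonical} case and is not used here. Second, in the non-log-canonical setting the dual complex is only well defined up to simple homotopy equivalence (Definition~\ref{def dual complex nonlc}), not PL-homeomorphism, so your opening reduction should be phrased accordingly.
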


\subsection*{Strategy of proof.}
The proof of Theorem~\ref{main theorem} consists of several reductions.
For simplicity, we sketch them under the assumption that $(X,B, \mathbf M)$ is generalized log canonical.
The general case is treated similarly but requires some heavy notation.

In \S~\ref{sect.bir.case}, we show that the number of connected components of $\Nklt(X,B, \mathbf M)$ are preserved under birational morphisms.
In particular, this allows us to run certain MMPs while preserving the assumptions of the statement.
For instance, we can assume that $X$ is $\qq$-factorial and that $(X,B, \bM.)$ is generalized dlt.
Thus, we have $\Nklt(X,B, \mathbf M)=B \sups =1.$.

Then, the core of the proof consists in showing that some component of $B \sups =1.$ dominates $S$.
This is done in Proposition \ref{key prop}.
We illustrate the main idea under the assumption that $S$ is a curve and that $s \in S$ is a closed point.
Assume that no component of $B \sups =1.$ dominates $S$.
Then, we can assume that all the connected components of $B \sups =1.$ map to $s$.
For notation's sake, assume that they are two, denoted by $\Delta_1$ and $\Delta_2$.
Let $\tilde X \rar X$ be a log resolution of $(X,B)$, and denote by $(\tilde X,B_{\tilde X}, \mathbf M)$ the trace of $(X,B, \mathbf M)$ on $\tilde X$, see the line before Definition~\ref{def.sings} for the definition of trace of a generalized pair.
By the results of \S~\ref{sect.bir.case}, $B_{\tilde X} \sups =1.$ has two distinct connected components $\Gamma_1$ and $\Gamma_2$, each one mapping to the corresponding $\Delta_i$.
Notice that $\Gamma_1 \cup \Gamma_2 \subset \tilde X _s$, where $\tilde X _s$ denotes the fiber over $s$.
By our assumption, the fiber $\tilde X _s$ contains other irreducible components that connect $\Gamma_1$ and $\Gamma_2$.
Denote the support of these residual components by $E$.
In order to get a contradiction, we would like to contract $E$.
By ideas similar to ones contained in~\cite{FG14}*{proof of Theorem 1.1}, we can run a suitable MMP over $S$ that contracts $E$, while preserving at least one irreducible component of each $\Gamma_i$.
Thus, we reach a model $\overline{X} \rar S$ where the fiber over $s$ consists of the strict transforms of $\Gamma_1$ and $\Gamma_2$, which are now connected.
This contradicts suitable results in \S~\ref{sect.bir.case}, which guarantee that the MMP we just run cannot connect different connected components of the non-klt locus.

When $\dim (S) \geq 2$, this step is more delicate.
Indeed, the components $D_1$ and $D_2$ can dominate different subvarieties $T_1$ and $T_2$, each one containing $s$.
To control this phenomenon, we make use of the generalized canonical bundle formula (see \S~\ref{section cbf}).
For simplicity, assume that $X \rar S$ is a contraction.
Then, the generalized canonical bundle formula allows us to define a generalized pair $(S,B_S,\mathbf N)/S$.
In this way, we can regard $T_1$ and $T_2$ as non-klt centers of $(S,B_S, \mathbf N)$.
Then, the results of \S~\ref{sect.bir.case} guarantee that a generalized dlt model $\hat S$ of $(S,B_S, \mathbf N)$ preserves the connected components of $\Nklt(S,B_S, \mathbf N)$.
To conclude, we show that a similar argument as in the case $\dim(S)=1$ works over $\hat S$.

Once it is established that at least one connected component of $B \sups =1.$ dominates $S$, we can conclude the proof of Theorem~\ref{main theorem}.
Let $D_1,\ldots,D_k$ be the connected components of $B \sups =1.$.
Up to relabelling, we can assume that $D_1$ dominates $S$.
Then, the divisor $\K X. + B \sups < 1. +M$ is not pseduo-effective over $S$.
Therefore, we may run a $(\K X. + B \sups <1. + \mathbf M _X)$-MMP over $S$, which terminates with a Mori fiber space $g \colon \tilde X \rar Z$.
Denote by $(\tilde X,\tilde B , \mathbf M)$ the trace of $(X,B, \mathbf M)$ on $\tilde X$.
Since this is a $(-B) \sups =1.$-MMP, it follows from the results in \S~\ref{sect.bir.case} that
the connected components of $B \sups =1.$ are in one-to-one correspondence with the connected components of $\tilde B \sups =1.$.
Since $\tilde D _1$ is $g$-ample and $\tilde D_1 \cap \tilde D_i = \emptyset$ for $i \geq 2$, it follows that $g$ has relative dimension 1 with general fiber $\pr 1.$.
This forces every $D_i$ to be horizontal over $Z$.
Since $\K \tilde X. + \tilde B + \mathbf M \subs \tilde X . \sim_\qq 0/Z$, it follows that $k =2$.
By direct inspection, we conclude that, when $k=2$, $(\tilde X, \tilde B , \mathbf M) \rar Z$ is a standard $\pr 1.$-link up to an \'etale base change.

The proof of Theorem~\ref{Theoremp1 link} follows the proof of~\cite{Kol13}*{Theorem~4.40}, which deals with the case of log Calabi--Yau pairs.
The argument is by induction on the dimension, and relies on Theorem~\ref{main theorem} to reduce to the case when $\Nklt(X,B, \mathbf M)$ is connected along the fibers of the morphism.

We conclude by sketching the proof of Theorem~\ref{dual.main.thm}.
Theorem~\ref{main theorem} gives an explicit description of the case when $\mathcal{DMR}(X,B, \mathbf M)$ is not connected.
Therefore, we can assume that $\mathcal{DMR}(X,B, \mathbf M)$ is connected.
Then, as a direct combinatorial consequence of Theorem~\ref{Theoremp1 link}, we obtain that the dual complex $\mathcal{DMR}(X,B,\bM.)$ is equidimensional at each point.
In \S~\ref{sect.dual.cplx} we follow ideas of~\cite{KX16} and show that, under certain assumptions, we can construct a morphism $f \colon X \rar Z$ such that $\dim(Z) \geq 1$ and $\mathcal{DMR}(X,B, \mathbf M) \simeq \mathcal{DMR}(X_z,B_z, \mathbf M | \subs X_z.)/G$, where $X_z$ is a general fiber of $f$ and $G$ is a finite group.
Then, in this situation, we can argue by induction on the dimension.
In the leftover cases, we have that $B \sups =1.$ fully supports a big and semi-ample divisor.
This condition on $\Nklt(X,B, \mathbf M)$ allows us to apply a version of Kawamata--Viehweg vanishing for generalized pairs, and conclude the proof of Theorem~\ref{dual.main.thm}.

\subsection*{Acknowledgements} The authors wish to thank  
Tommaso de Fernex, Gabriele Di Cerbo, Christopher Hacon, Mirko Mauri, James M\textsuperscript{c}Kernan, Joe Waldron
for helpful discussions and encouragements.
The authors wish to thank Christopher Hacon and Mirko Mauri also for reading a first draft of this work.
We wish to thank the anonymous referee for useful comments and suggestions that helped the authors improve the clarity of this work.

\section{Preliminaries}\label{prelim.sect}
In this section, we set our notation and collect some definitions and preliminary results that will be useful in the paper.

\subsection{Terminology and conventions}
\label{term.subs}
Throughout this paper, we will work over an algebraically closed field of characteristic 0.
For anything not explicitly addressed in this subsection, we direct the reader to the terminology and the conventions of~\cite{KM98} and~\cite{Kol13}.
\newline
A \emph{contraction} is a projective morphism $f\colon X \rar Z$ of quasi-projective varieties with $f_* \O X. = \O Z.$. 
If $X$ is normal, then so is $Z$ and the fibers of $f$ are connected.

Let $\mathbb{K}$ denote $\zz$, $\qq$, or $\rr$. We say that $D$ is a \emph{$\mathbb{K}$-divisor} on a variety $X$ if we can write $D = \sum \subs i=1. ^n d_i P_i$ where $d_i \in \mathbb{K}$, $n \in \nn$ and $P_i$ is a prime Weil divisor on $X$ for all $i=1, \ldots, n$. 
We say that $D$ is $\mathbb{K}$-Cartier if it can be written as a $\mathbb{K}$-linear combination of $\zz$-divisors that are Cartier.
The \textit{support} of a $\mathbb{K}$-divisor $D=\sum_{i=1}^n d_iP_i$ is the union of the prime divisors appearing in the formal sum $\mathrm{Supp}(D)= \sum_{i=1}^n P_i$.
\newline
In all of the above, if $\mathbb{K}= \zz$, we will systematically drop it from the notation.

Given a prime divisor $P$ in the support of $D$, we will denote by $\mu_P (D)$ the coefficient of $P$ in $D$.
Given a divisor $D = \sum \subs i=1. ^n \mu_{P_i}(D) P_i$, we define its {\it round down} $\lfloor D \rfloor  \coloneqq \sum \subs i=1.^n \lfloor \mu_{P_i}(D) \rfloor P_i$.
The {\it round up} $\lceil D \rceil$ of $D$ is defined analogously.
The {\it fractional part} $\{D\}$ of $D$ is defined as $\{D\} \coloneqq D - \lfloor D \rfloor$.
Let $D_1 = \sum \subs i=1. ^n \mu_{P_i}(D_1) P_i$ and $D_2= \sum \subs i=1. ^n \mu_{P_i}(D_2) P_i$. We define $D_1 \wedge D_2 \coloneqq \sum \subs i=1. ^n \min \lbrace \mu_{P_i}(D_1),\mu_{P_i}(D_2) \rbrace P_i$.
Similarly, we set $D_1 \vee D_2 \coloneqq \sum \subs i=1. ^n \max \lbrace \mu_{P_i}(D_1),\mu_{P_i}(D_2) \rbrace P_i$.
For a divisor $D$, we set $D \sups \geq 0. \coloneqq D \vee 0$, where $0$ denotes the zero divisor.
Similarly, we define $D \sups \leq 0. \coloneqq -(D \wedge 0)$. In particular, we have $D=D \sups \geq 0. - D \sups \leq 0.$.

Given a divisor $D = \sum \mu_{P_i}(D) P_i$ on a normal variety $X$, and a morphism $\pi \colon X \to Z$, we define
\begin{align*}
D^v \coloneqq \sum_{\pi(P_i) \subsetneqq Z} \mu_{P_i}(D) P_i, \
D^h \coloneqq \sum_{\pi(P_i) = Z} \mu_{P_i}(D) P_i.
\end{align*}

\subsection{B-divisors}
\label{b-div.subs}
Let $\mathbb{K}$ denote $\zz$, $\qq$, or $\rr$.
Given a normal variety $X$, a {\it $\mathbb{K}$-b-divisor} $\mathbf{D}$ is a (possibly infinite) sum of geometric valuations $V_i$ of $k(X)$ with coefficients in $\mathbb{K}$,
\begin{align*}
 \mathbf{D}= \sum_{i \in I} b_i V_i, \; b_i \in \mathbb{K},
\end{align*}
such that for every normal variety $X'$ birational to $X$, only a finite number of the $V_i$ can be realized by divisors on $X'$. 
The {\it trace} $\mathbf{D}_{X'}$ of $\mathbf{D}$ on $X'$ is defined as 
\begin{align*}
\mathbf{D}_{X'} \coloneqq \sum_{
\{i \in I \; | \; c_{X'}(V_i)= D_i, \;  
\codim_{X'} D_i=1\}} b_i D_i,
\end{align*}
where $c_{X'}(V_i)$ denotes the center of the valuation on $X'$.
\newline
Given a $\mathbb{K}$-b-divisor $\mathbf{D}$ over $X$, we say that $\mathbf{D}$ is a {\it $\mathbb{K}$-b-Cartier} $\mathbb{K}$-b-divisor if there exists a birational model $X'$ of $X$ such that $\mathbf{D}_{X'}$ is $\mathbb K$-Cartier on $X'$ and for any model $r \colon X''  \rar X', \; \mathbf{D}_{X''} = r^\ast \mathbf{D}_{X'}$.
When that is the case, we will say that $\mathbf{D}$ descends to $X'$ and write $\mathbf{D}= \overline{\mathbf{D}_{X'}}$.
We say that $\mathbf{D}$ is {\it b-effective}, if $\mathbf{D}_{X'}$ is effective for any model $X'$.
We say that $\mathbf{D}$ is {\it b-nef}, if it is $\mathbb{K}$-b-Cartier and, moreover, there exists a model $X'$ of $X$ such that $\mathbf{D}= \overline{\mathbf{D}_{X'}}$ and $\mathbf{D}_{X'}$ is nef on $X'$. 
The notion of b-nef b-divisor can be extended analogously to the relative case.
\newline
In all of the above, if $\mathbb{K}= \zz$, we will systematically drop it from the notation.

\begin{example}
Let $X$ be a normal variety and denote by $K_X$ the choice of a divisor in the canonical class. 
The \emph{canonical b-divisor} $\mathbf{K}$ extending $K_X$ is defined as follows:
its trace $\mathbf K \subs X.$ on $X$ is $K_X$, while the trace $\mathbf K \subs X'.$ on a birational model $\pi \colon X' \rar X$ is given by $\K X'.$, where the divisor $\K X'.$ in the canonical class of $X'$ is chosen so that $\pi_* \K X'.= \K X.$.
The b-divisor $\mathbf{K}$ is not $\qq$-b-Cartier, as it follows easily by blowing up a smooth point.
\end{example}

\begin{example}
\label{discr.div.ex}
Let $(X, B)$ be a log sub-pair.
The \emph{discrepancy b-divisor} $\mathbf{A}(X,B)$ is defined as follows: on a birational model $\pi \colon X' \rar X$, its trace $\mathbf{A}(X,B)_{X'}$ is given by the identity  $\mathbf{A}(X,B)_{X'}\coloneqq \K X'. - \pi^\ast (\K X. + B)$.
Then, the b-divisor $\mathbf{A}^\ast (X,B)$ is defined taking its trace $\mathbf{A}^\ast (X,B)_{X'}$ on $X'$ to be $\sum \subs a_i > -1. a_i D_i$, where $\mathbf{A}(X,B)_{X'} = \sum_i a_i D_i$.
\end{example}

Given a morphism of normal varieties 
$\pi \colon X \to T$, and a
$\mathbb K$-b-Cartier 
$\mathbb K$-b-divisor
$\mathbf M$ 
(resp. 
$\mathbf N$)
on $X$ 
(resp. $T$), 
we will write 
$\mathbf M \sim_{\mathbb K} \pi^* \mathbf N$ 
to indicate that there exists a birational model 
$\pi' \colon X' \rightarrow T'$ of $\pi \colon X \rar T$ such that $\bM X'. \sim_{\mathbb{Q}}(\pi')^*\mathbf{N}_{T'}$, $\bM.= \overline{\bM X'.}$, and $\mathbf{N}=\overline{\mathbf{N}_{T'}}$.

\subsection{Generalized pairs and singularities}
\label{sect.gen.pairs.sings}

We recall the definition of generalized pairs, first introduced in~\cite{BZ16}.
This is a generalization of the classic setting of log pairs.

\begin{definition}
A {\em generalized sub-pair} $(X,B, \mathbf{M})/Z$ over $Z$  is the datum of:
\begin{itemize}
\item a normal variety $X  \rar Z$ projective over $Z$;
\item an $\mathbb R$-Weil divisor $B$ on $X$;
\item a $b$-$\mathbb R$-Cartier b-divisor $\mathbf{M}$ over $X$ which descends to an $\mathbb R$-Cartier divisor $\mathbf{M}_{X'}$ on some birational model $X' \rightarrow X$, and $\mathbf{M}_{X'}$ is relatively nef over $Z$.
\end{itemize}
Moreover, we require that $K_X +B+ \mathbf{M}_X$ is $\mathbb R$-Cartier.
If $B$ is effective, we say that $(X,B,\bM.)/Z$ is a generalized pair.
\end{definition}

In the above definition, we can always replace $X'$ with a higher birational model $X''$ and $\mathbf{M}_{X'}$ with $\mathbf{M}_{X''}$ without changing the generalized pair.
Whenever $\mathbf{M}_{X''}$ descends to $X''$, then the data of the rational map $X'' \drar X$, $B$, and $\mathbf{M}_{X''}$ encode all the information of the generalized pair.

When the setup is clear, we will denote the generalized sub-pair $(X, B, \mathbf{M})/Z$ by $(X,B+\mathbf{M}_X)/Z$ and we will say that $(X,B+\mathbf{M}_X)$ is a generalized pair over $Z$ with datum $\mathbf M$; for the sake of simplifying the notation, we will often replace $\mathbf{M}_X$ by $M$, and write $(X, B+M)$.
When $Z = {\rm Spec}(\mathbb{C})$, we will simply write $(X, B, \mathbf{M})$ and $(X, B +M)$.

Let $(X,B, \mathbf{M})/Z$ be a generalized sub-pair and $\pi \colon Y \rar X$ a projective birational morphism. 
Then, we may write
\begin{align*}
\K Y.+B_Y + \mathbf{M}_{Y}=\pi^\ast (K_X+B+M).
\end{align*}
Given a prime divisor $E$ on $Y$, we define the {\em generalized log discrepancy} of $E$ with respect to $(X,B+M)/Z$  to be $a_E(X,B+M)\coloneqq 1-\mu_{E}(B_Y)$.
In this setup, we call $(Y,B_Y,\bM.)/Z$ the \emph{trace} of $(X,B,\bM.)/Z$ on $Y$.

\begin{definition}
\label{def.sings}
Let $(X,B, \mathbf{M})/Z$ be a generalized sub-pair.
If $a_E(X,B+M) \geq 0$ for all divisors $E$ over $X$, we say that $(X,B+M)$ is \emph{generalized sub-log canonical}.
Similarly, if $a_E(X,B+M) > 0$ for all divisors $E$ over $X$ and $\lfloor B \rfloor \leq 0$, we say that $(X,B+M)$ is \emph{generalized sub-klt}.
When $B \geq 0$, we say that $(X,B+M)$ is \emph{generalized log canonical} or \emph{generalized klt}, respectively.
\end{definition}

\begin{remark} 
\label{remark valuations nef part}
Let $(X,B, \mathbf{M})/Z$ be a generalized sub-pair and let $N$ be an $\mathbb R$-Cartier divisor on $X$ such that $\mathbf{M}_{X'}+f^\ast N$ is nef over $Z$, where $f \colon X' \rar X$ is a birational model of $X$ on which $\mathbf{M}_{X'}$ descends.
Then, $(X, B, \mathbf{M}+\overline{N})/Z$ is a generalized sub-pair with datum $\mathbf{M}+ \bar{N}$ and $a_E(X,B+M)=a_E(X,B+(M+N))$ for every divisor $E$ over $X$.
\end{remark}

\begin{example}
\label{gen.discr.div.example}
Let $(X,B,\mathbf{M})$ be a generalized sub-pair.
The \emph{generalized discrepancy b-divisor} $\mathbf{A}(X,B,\mathbf{M})$ is defined as follows: 
on a birational model $\pi \colon X' \rar X$, its trace $\mathbf{A}(X,B,\mathbf{M})_{X'}$ is given by the identity  $\mathbf{A}(X,B,\mathbf{M})_{X'}\coloneqq \K X'. + \mathbf{M} \subs X'.- \pi^\ast (\K X. + B + \mathbf{M}_X)$.
Then, the b-divisor $\mathbf{A}^\ast (X,B,\mathbf{M})$ is defined taking its trace $\mathbf{A}^\ast (X,B,\mathbf{M})_{X'}$ on $X'$ to be $\mathbf{A}(X,B,\mathbf{M})^\ast_{X'} \coloneqq \sum \subs a_i > -1. a_i D_i$, where $\mathbf{A}(X,B,\mathbf{M})_{X'} = \sum_i a_i D_i$.
Notice that, if $\mathbf{M}$ descends to $X'$, we have identities of b-divisors $\mathbf{A}(X,B,\mathbf{M})=\mathbf{A}(X',B')$, and $\mathbf{A}^\ast (X,B,\mathbf{M})=\mathbf{A}^\ast (X',B')$, where $B'=-\mathbf{A}(X,B,\mathbf{M}) \subs X'.$.
\end{example}

\begin{definition}
\label{lc.center.def}
Let $(X,B,\bM.)/Z$ be a generalized sub-pair and let $E$ be a divisor over $X$.
If $a_E(X,B+M) \leq 0$, we say that $E$ is a {\em non-klt place} for the generalized pair, and $c_X(E)\subset X$ is a {\em non-klt center} for the generalized pair. 
The \emph{non-klt locus} $\Nklt(X,B, \mathbf M)$ is defined as the union of all the non-klt centers of $(X,B+M)$. 
If $a_E(X,B+M) = 0$, we say that $E$ is a {\em generalized log canonical place} for $(X,B+M)$, and $c_X(E)$ is a {\em generalized log canonical center} for $(X,B+M)$, provided that $(X, B+M)$ is generalized log canonical in a neighborhood of $c_X(E)$.
\end{definition}

It is possible to extend the classical results on adjunction for lc pairs, cf.~\cite{Kol92}*{\S~16}, to the context of generalized pairs.

Let $(X,B,\mathbf{M})/Z$ be a generalized pair.
Let $S$ be an irreducible component of $\lfloor B \rfloor$, and denote by $S^\nu$ its normalization.
Let $f \colon X' \rar X$ be a log resolution of $(X,B)$ where $\mathbf{M}$ descends.
Denote by $g  \colon  S' \rar S^\nu$ the induced morphism, where $S'$ represents the strict transform of $S$ on $X'$.
Then, we can write
\begin{align*}
\K X'. + B' + \mathbf{M} \subs X'. = f^\ast  (\K X. + B + \mathbf{M}_X).
\end{align*}
Up to replacing $\mathbf{M}$ in its $\mathbb{K}$-linear equivalence class, we can assume that $S'$ does not appear in the support of $\mathbf{M} \subs X'.$.
Then, we set
\begin{align*}
\K S'. + B_{S'} + \mathbf{N}_{S'} \coloneqq (\K X'. + B' + \mathbf{M} \subs X'.)|_{S'},
\end{align*}
where $B_{S'} \coloneqq (B'-S')|_{S'}$, and $\mathbf{N} \subs S'. \coloneqq \mathbf{M} \subs X'.|_{S'}$. 
Define $B_{S^\nu} \coloneqq g_* B_{S'}$, and $\mathbf{N}_{S^\nu} \coloneqq g_* \mathbf{N}_{S'}$.
By construction, we get
\begin{align*}
\K S^\nu. + B_{S^\nu} + \mathbf N _{S^\nu} = (\K X. + B + \mathbf{M}_X)|_{S^\nu}.
\end{align*}
We refer to such operation as {\it generalized divisorial adjunction}.
By construction, the generalized pair $(S^\nu,B \subs S^\nu.,\mathbf{N})$ is a generalized pair over $Z$.
We may write $\mathbf M | \subs S^\nu.$ for $\mathbf N$ to highlight that indeed $\mathbf N$ comes from the restriction of $\mathbf M$ to $S$.

\subsection{Generalized dlt pairs and dlt models}
\label{gen.dlt.mod.ssect}
In this section, we recall the notion of dlt and plt singularities in the context of generalized pairs and we prove the existence of dlt models.

\begin{definition}
\label{gen.dlt.def}
We say that a generalized pair $(X,B, \mathbf{M})/Z$ is {\em generalized dlt}, if it is generalized log canonical and for the generic point $\eta$ of any generalized log canonical center the following conditions hold:
\begin{itemize}
    \item[(i)] $(X,B)$ is log smooth in a neighborhood of $\eta$; and
    \item[(ii)] $\bM . = \overline{\bM X.}$ over a neighborhood of $\eta$.
\end{itemize}
If, in addition, every connected component of $\lfloor B \rfloor$ is irreducible, we say that $(X,B+M)$ is {\em generalized plt}.
\end{definition}

The following result is a refinement of~\cite{Fil18}*{Theorem~3.2} and proves the existence of generalized dlt models.

\begin{theorem} 
\label{generalized dlt model}
Let $(X,B, \mathbf{M})/Z$ be a generalized pair.  
Then, there exists a $\qq$-factorial model $f^m\colon X^m \rar X$ such that every $f^m$-exceptional divisor has generalized log discrepancy with respect to $(X,B+M)$ at most $0$. 
Let $E^m$ denotes the reduced $f^m$-exceptional divisor.
Then the generalized pair $(X^m,B^m,\mathbf{M})/Z$ is generalized dlt, where $B^m \coloneqq (f^m)^{-1}_* (B \wedge \Supp(B)) + E^m$.
\end{theorem}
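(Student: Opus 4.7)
The plan is to construct $X^m$ by running an MMP starting from a sufficiently high log resolution, following the template of~\cite{Fil18}*{Theorem~3.2}; the refinement sought here lies in identifying the precise shape of $B^m$. First I would pick a log resolution $\pi \colon Y \to X$ of $(X, B)$ on which $\mathbf M$ descends, and define
\[
B_Y \coloneqq \pi^{-1}_\ast(B \wedge \Supp(B)) + E_Y,
\]
where $E_Y$ denotes the reduced $\pi$-exceptional divisor. Then $(Y, B_Y, \mathbf M)$ is generalized log smooth, hence $\mathbb{Q}$-factorial and generalized dlt.

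Next I would analyse the discrepancy divisor $D \coloneqq K_Y + B_Y + \mathbf M_Y - \pi^\ast(K_X + B + \mathbf M_X)$. Writing $\pi^\ast(K_X + B + \mathbf M_X) = K_Y + B'_Y + \mathbf M_Y$ with $B'_Y$ defined by the generalized discrepancy formula, one has $D = B_Y - B'_Y$, a divisor supported on $\pi$-exceptional primes and on strict transforms of components of $B$ with coefficient greater than $1$. The coefficient of $D$ along a $\pi$-exceptional prime $E$ equals $a_E(X, B+M)$, while along $\pi^{-1}_\ast P_i$ with $\mu_{P_i}(B) = b_i > 1$ it equals $1 - b_i < 0$. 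The effective part $D^{\geq 0}$ thus collects precisely those $\pi$-exceptional primes with strictly positive generalized log discrepancy, which are exactly the divisors that must be contracted.

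I would then run a $(K_Y + B_Y + \mathbf M_Y)$-MMP over $X$; since $\pi^\ast(K_X + B + \mathbf M_X)$ is $\pi$-numerically trivial, this amounts to running a $D$-MMP over $X$, whose termination for generalized dlt pairs in the relative setting is provided by the MMP machinery for generalized pairs used in~\cite{Fil18}. The output is a $\mathbb{Q}$-factorial model $f^m \colon X^m \to X$ on which the pushforward $D^m$ of $D$ is $f^m$-nef. Since $f^m_\ast D^m = \pi_\ast D \leq 0$, the negativity lemma applied to $-D^m$ yields $D^m \leq 0$, forcing every $\pi$-exceptional prime of positive generalized log discrepancy to be contracted along the MMP. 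Consequently every $f^m$-exceptional divisor has generalized log discrepancy at most $0$, and the resulting boundary has the desired shape $B^m = (f^m)^{-1}_\ast(B \wedge \Supp(B)) + E^m$.

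Finally, I would verify that $(X^m, B^m, \mathbf M)$ is generalized dlt. Since every divisor contracted by the MMP has strictly positive generalized log discrepancy---and hence cannot host a generalized lc place---the generalized lc centers of $(X^m, B^m, \mathbf M)$ correspond bijectively to those of the generalized log smooth pair $(Y, B_Y, \mathbf M)$, via the induced birational map. Over the generic point of each such center, the map $Y \dashrightarrow X^m$ is an isomorphism, so both log smoothness of $(X^m, B^m)$ and the descent of $\mathbf M$ in the sense of Definition~\ref{gen.dlt.def} are inherited from $Y$. The main obstacle is justifying termination of the relative MMP for generalized dlt pairs and applying the negativity lemma correctly in the presence of the nef moduli part $\mathbf M$; both points are addressed within the generalized-pair MMP framework, after which the rest is essentially bookkeeping as in~\cite{Fil18}.
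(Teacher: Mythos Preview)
Your overall strategy---run an MMP from a log resolution, invoke the negativity lemma to contract the divisors of positive generalized log discrepancy, and then verify that dlt-ness is preserved---is the natural one, and the bookkeeping you describe is correct. The genuine gap is termination. You write that ``termination for generalized dlt pairs in the relative setting is provided by the MMP machinery for generalized pairs used in~\cite{Fil18}'', but this is precisely what is not available: termination of the MMP for a $\qq$-factorial generalized \emph{dlt} (as opposed to klt) pair is not known in general, and \cite{Fil18}*{Theorem~3.2} itself does not run a dlt MMP directly---it uses the same perturbation trick as the present paper. So your citation is circular.

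The paper's proof is organised around exactly this obstacle. Rather than running the $(K_Y+B_Y+\mathbf M_Y)$-MMP, it exploits the existence of an effective $f$-exceptional divisor $C'$ with $-C'$ relatively ample (available because the resolution is a sequence of blow-ups) to build an auxiliary \emph{klt} pair $(X',\Delta'_{\epsilon,\nu,\tau})$ whose relative MMP over $X$ terminates by~\cite{BCHM}. The parameters $0<\epsilon\ll\tau\ll\nu\ll 1$ are then tuned so that this klt MMP contracts precisely the exceptional divisors $F'$ and $G'$ of positive generalized log discrepancy. A cost of this detour is that the output is only shown to be generalized \emph{log canonical}, not dlt, so the paper finishes by invoking~\cite{Bir16a}*{2.13.(2)} to pass to a genuine generalized dlt model; your direct approach, had termination been available, would avoid this last step, since running the MMP for the dlt pair $(Y,B_Y,\mathbf M)$ itself preserves dlt-ness. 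In short: your argument is morally the right one and is cleaner downstream, but the key technical content of the theorem is the reduction to a terminating klt MMP, and that reduction is missing from your proposal.
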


\begin{proof}
Let $f\colon X' \rar X$ be a log resolution of $(X,B)$ where $\mathbf M$ descends.
For the sake of simplifying notation, we will use $M'$ to denote $\mathbf{M}_{X'}$.
By Hironaka's theorem, we can assume that $f$ is obtained by blowing up loci of codimension at least two and that there exists an effective $f$-exceptional divisor $C'$ such that $-C'$ is $f$-ample.
We define $B'$ via the identity
\begin{align*}
\K X'. + B' + M' = f^\ast (\K X. + B + M ).
\end{align*}
In view of this definition, we can decompose $B'$ as $B'=f_*^{-1} \lbrace B \rbrace + E^+ + F' - G'$, where 
\begin{itemize}
    \item $E^+$ denotes the (not necessarily $f$-exceptional) divisors with generalized log discrepancy at most $0$ with respect to $(X, B, \mathbf{M})$;
    \item $F'$ the sum of all $f$-exceptional divisors with generalized log discrepancy in $(0, 1]$; and,
    \item $G'$ the sum of all $f$-exceptional divisors with generalized log discrepancy $>1$.
\end{itemize}
We define $E' \coloneqq \Supp (E^+)$.
Letting $H$ be a sufficiently ample divisor on $X$, for all $\epsilon, \nu, \tau \in \mathbb{R}$,
\begin{equation} 
\label{equation with epsilon mu nu}
E' + (1 + \nu)F' + \tau(-C'+f^\ast H)+M' = (1-\epsilon \tau)E' + (1+ \nu)F' + \tau(\epsilon E' - C' + f^\ast H) + M',
\end{equation}
and for any $0< \epsilon \ll \tau$ and $\epsilon \ll 1$, both $\tau(-C'+f^\ast H)+M'$ and $\tau(\epsilon E' - C' + f^\ast H) + M'$ are ample over $X$.
For any such choice of $\epsilon$ and $\tau$, we can choose divisors $H'_{1,\tau} \sim_{\mathbb{R},X} \tau(-C'+f^\ast H)+M'$ and $H'_{2,\tau, \epsilon} \sim_{\mathbb{R},X} \tau(\epsilon E' - C' + f^\ast H) + M'$ such that $B'+H'_{1,\tau}+H'_{2,\tau, \epsilon}$ has simple normal crossing support, and $\lfloor H'_{1,\tau} \rfloor = \lfloor H'_{2,\tau, \epsilon} \rfloor =0$.
\newline
Thus, if $0 < \tau <1 $ and $0 < \nu \ll1$, the pair
\begin{align*}
(X', \Delta'_{\epsilon, \nu, \tau}  \coloneqq  f^{-1}_*\lbrace B \rbrace+(1 - \epsilon \tau)E' + (1 + \nu)F' + H'_{2,\tau, \epsilon})
\end{align*}
is klt.
By~\cite{BCHM}, we can run a $(K_{X'} + \Delta'_{\epsilon, \tau, \nu})$-MMP over $X$ that terminates with a relative $\qq$-factorial minimal model
\begin{align*}
f^m_{\epsilon,\tau,\nu} \colon (X^m_{\epsilon,\tau,\nu}, \Delta^m_{\epsilon,\tau,\nu}) \rar X.
\end{align*}
By~\eqref{equation with epsilon mu nu}, $f^m_{\epsilon,\tau,\nu}$ is also a minimal model over $X$ for the pair
\begin{align*}
(X',\Gamma'_{\tau, \nu} \coloneqq  f^{-1}_*\lbrace B \rbrace+ E' + (1 + \nu)F' + H'_{1,\tau}).
\end{align*}
As the dlt property is preserved under steps of the MMP~\cite{KM98}*{Corollary~3.44}, both $(X^m_{\epsilon,\nu, \tau}, \Delta^m_{\epsilon,\nu, \tau})$ and $(X^m_{\epsilon,\nu, \tau}, \Gamma^m_{\nu, \tau})$ are dlt pairs, where $\Delta^m_{\epsilon,\nu, \tau}$ (respectively, $\Gamma^m_{\nu, \tau}$) is the push-forward of $\Delta'_{\epsilon,\nu, \tau}$ (resp., $\Gamma'_{\nu, \tau}$) on $X^m_{\epsilon, \nu, \tau}$.
Hence, the pair $(X^m_{\epsilon,\nu, \tau},B^m_{\epsilon,\nu, \tau})$ is dlt, where $B^m_{\epsilon,\nu, \tau}$ is the push-forward of $f^{-1}_* \lbrace B \rbrace +E' +F'$ on $X^m_{\epsilon,\nu, \tau}$.
\newline
To simplify the notation, we will denote by $A^m$ the strict transform on $X_{\epsilon,\nu, \tau}^m$ of any divisor $A$ on $X'$.
In particular, we will denote $\mathbf{M}_{X^m_{\epsilon, \nu, \tau}}$ by $M^m$.
Then, we define
\begin{eqnarray*}
&N \coloneqq \K X_{\epsilon,\nu, \tau}^m. + B^m_{\epsilon,\nu, \tau} + \nu F^m + H^m_{1} \sim_{\rr,f^m_{\epsilon,\nu, \tau}} \K X_{\epsilon,\nu, \tau}^m. + \Delta^m_{\epsilon,\nu, \tau},\\
&T \coloneqq \K X_{\epsilon,\nu, \tau}^m. + B^m_{\epsilon,\nu, \tau} + (E^+-E')^m - G^m + M^m \sim_\rr (f^m_{\epsilon,\nu, \tau})^\ast  ( \K X. + B + M).
\end{eqnarray*}
The divisor $N$ is $f^m_{\epsilon,\nu, \tau}$-nef, while $T$ is $f^m_{\epsilon,\nu, \tau}$-trivial, and
\begin{align*}
T - N \sim_{\rr,f^m_{\epsilon,\nu, \tau}} \tau C^m + (E^+ -E')^m - G^m  - \nu F^m  \eqqcolon D,
\end{align*}
so that $-D$ is $f_{\epsilon,\nu, \tau}^m$-nef and $f_{\epsilon,\nu, \tau \ast}^m D \geq 0$. 
Therefore, by the negativity lemma~\cite{KM98}*{Lemma~3.39}, $D$ is effective.
\newline
As $C'$, $E^+-E'$, $F'$ and $G'$ are independent of ${\epsilon,\nu, \tau}$, if we choose $0 < \epsilon \ll \tau \ll \nu \ll 1$, the $(K_{X'} + \Delta'_{\epsilon, \nu, \tau})$-MMP contracts $F'$ and $G'$, as
\begin{align*}
K_{X'} + \Delta'_{\epsilon, \nu, \tau} \sim_{\mathbb{R}, f} G' + \nu F' -\tau C' -(E^+-E').
\end{align*}
Indeed, as the effective divisors $G'$, $F'$ and $E^+-E'$ share no prime components, $\Supp(C') \subset \Supp(E'+F'+G')$, and $\tau \ll \nu$, for every prime divisor $P'$ on $X'$ $\mu \subs P'. (G' + \nu F' -\tau C' -(E^+-E')) > 0$ if and only if $\mu \subs P'. (G' + \nu F' -(E^+-E'))>0$.
We fix once and for all such a choice of the coefficients $\epsilon, \tau, \nu$, and we drop the dependence from ${\epsilon,\nu, \tau}$ in our notation.
\newline
The generalized pair $(X^m,B^m+M^m)/Z$ is generalized log canonical. 
In fact, $f^\ast H-C'$ is ample, as $H$ is assumed to be sufficiently ample; 
picking $0 \leq A' \sim_\qq \tau(f^\ast H - C')$ a general element in its $\mathbb{Q}$-linear equivalence class, so that $(X',f_* \sups -1. \lbrace B \rbrace + E' + (1+\nu)F' + A')$ is dlt by Bertini's theorem, the generalized pair $(X',f_* \sups -1. \lbrace B \rbrace + E' + (1+\nu)F' + A', \mathbf{M})/Z$ is generalized dlt, and each step in the $(K_{X'} + \Delta'_{\epsilon, \nu, \tau})$-MMP leading to $X^m$ is a $(\K X'. + f_* \sups -1. \lbrace B \rbrace + E' + (1+\nu)F' + A'+M')$-negative contraction.
Therefore, the generalized pair $(X^m,B^m+A^m,\mathbf{M})/Z$ is generalized log canonical.
As $X^m$ is $\qq$-factorial, then $(X^m,B^m, \mathbf{M})/Z$ is generalized log canonical.
Thus, $(X^m,B^m+M^m)$ has all the claimed properties, besides the fact that it may not be generalized dlt.
\newline
To conclude, it suffices to substitute $(X^m,B^m+M^m)$ with a generalized dlt model, which exists by~\cite{Bir16a}*{2.13.(2)} as $(X^m,B^m+M^m)$ is log canonical.
Passing to such model only extracts divisors with generalized log discrepancy $0$ with respect to $(X^m,B^m+M^m)$.
This completes the proof of the theorem.
\end{proof}

\begin{definition}
For a generalized pair $(X, B, \mathbf{M})/Z$,
we call the generalized pair constructed in Theorem~\ref{generalized dlt model} and denoted by $(X^m,B^m, \mathbf{M})/Z$ a \emph{generalized dlt model} for $(X,B+M)$.
\end{definition}

Finally, we include here a couple of technical results that will be used in the proof of Theorem~\ref{main technical thm for complexes}.
We first introduce a couple of definitions.

\begin{definition}
\label{def:dlt.away}
Given a generalized pair $(X,B,\bM.)$ and a closed subset $C \subset X$, we say that $(X,B,\bM.)$ is generalized dlt away from $C$ if $(X \setminus C,B,\bM.)$ is generalized dlt.
Here, we restrict $B$ to the open subset $X \setminus C$ as in \cite{Har77}*{Proposition II.6.5}.
Similarly, the preimage of $X \setminus C$ is open in every higher birational model of $X$. Therefore, we can therefore restrict $\bM.$ to $X \setminus C$ by restricting all its traces to the preimages of $X \setminus C$.
\end{definition}

\begin{definition}
\label{fully.supp.def}
Let $F$ be an effective $\mathbb{R}$-divisor on a variety $X$, and let $D$ be an $\mathbb R$-divisor on $X$. We say that $F$ \emph{fully supports} $D$ if $D$ is effective and the support of $D$ coincides with the support of $F$.
Moreover, given a morphism of varieties $f \colon X \to Z$, we say that $F$ {\it fully supports an $f$-ample divisor} if $F$ fully supports an effective $f$-ample $\mathbb{R}$-divisor $H$.
\end{definition}

\begin{lemma} \label{lemma patch}
Let $(X,B,\bM.)$ be a $\qq$-factorial generalized log canonical pair, and let $q \colon X \rar Z$ be a contraction.
Assume that $B \sups =1.$ fully supports a $q$-ample divisor, and that $(X,B,\bM.)$ is generalized dlt away from $B \sups =1.$.
Let $\pi \colon (X',B',\bM.) \rar X$ be a generalized dlt model for $(X,B,\bM.)$.
Then, $(B')\sups =1.$ fully supports a $(q \circ \pi)$-ample divisor.
\end{lemma}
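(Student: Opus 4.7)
The plan is to transport the $q$-ample divisor $H$ with $\mathrm{Supp}(H)=\mathrm{Supp}(B^{=1})$ up to $X'$ via $\pi^*$, and then correct it by a small amount to make it ample over $Z$ while keeping the support intact.

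\textbf{Step 1: identify $(B')^{=1}$.} By the construction of the generalized dlt model (Theorem~\ref{generalized dlt model}), every $\pi$-exceptional divisor has generalized log discrepancy $\leq 0$ with respect to $(X,B,\bM.)$. Since $(X,B,\bM.)$ is generalized log canonical, each such discrepancy is in fact $0$, so every $\pi$-exceptional divisor appears in $B'$ with coefficient $1$. Moreover, because $(X,B,\bM.)$ is generalized dlt away from $B^{=1}$, the non-klt centers of $(X,B,\bM.)$ are all contained in $B^{=1}$; hence every $\pi$-exceptional divisor has center in $\mathrm{Supp}(B^{=1})$. Consequently
\[
\mathrm{Supp}((B')^{=1}) \;=\; \pi^{-1}_{*}\mathrm{Supp}(B^{=1}) \;\cup\; \mathrm{Ex}(\pi).
\]

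\textbf{Step 2: $\pi^*H$ has the right support.} Let $H$ be an effective $q$-ample $\mathbb{Q}$-Cartier divisor on $X$ with $\mathrm{Supp}(H)=\mathrm{Supp}(B^{=1})$. For any exceptional prime divisor $E_i$ of $\pi$, one has $\mathrm{ord}_{E_i}(\pi^*H)>0$ if and only if $c_X(E_i)\subseteq \mathrm{Supp}(H)$, which holds by Step 1. Thus
\[
\mathrm{Supp}(\pi^*H) \;=\; \pi^{-1}_{*}\mathrm{Supp}(H)\cup \mathrm{Ex}(\pi) \;=\; \mathrm{Supp}((B')^{=1}).
\]

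\textbf{Step 3: produce a $\pi$-anti-ample exceptional divisor.} Since $\pi$ is projective birational and $X$ is $\mathbb{Q}$-factorial, pick a $\pi$-ample $\mathbb{Q}$-Cartier divisor $A$ on $X'$ and set $D\coloneqq \pi^{*}\pi_{*}A-A$. Then $D$ is $\pi$-exceptional with $\pi_{*}D=0$, and $-D=A-\pi^{*}\pi_{*}A$ is $\pi$-ample (as $\pi^{*}\pi_{*}A$ is $\pi$-trivial). By the negativity lemma, $D$ is effective. In particular $\mathrm{Supp}(D)\subseteq \mathrm{Ex}(\pi)\subseteq \mathrm{Supp}(\pi^*H)$.

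\textbf{Step 4: perturb to get $(q\circ\pi)$-ampleness.} For every $z\in Z$, since $\pi^*H$ is $(q\circ\pi)$-nef and $-D$ is $\pi$-ample, the standard relative ampleness criterion gives that $\pi^{*}H - \epsilon D$ is $(q\circ\pi)$-ample for all sufficiently small $\epsilon>0$. At the same time, for every prime divisor $P$ in $\mathrm{Supp}(\pi^*H)$ we have $\mu_{P}(\pi^*H)>0$, so for $\epsilon>0$ small enough the coefficient $\mu_{P}(\pi^*H)-\epsilon\,\mu_{P}(D)$ remains strictly positive. Combined with Step 2, this yields an effective $(q\circ\pi)$-ample divisor $H'\coloneqq \pi^{*}H-\epsilon D$ with $\mathrm{Supp}(H')=\mathrm{Supp}((B')^{=1})$, proving that $(B')^{=1}$ fully supports a $(q\circ\pi)$-ample divisor.

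The only mildly delicate point is Step 3---ensuring the existence of an effective $\pi$-exceptional divisor $D$ with $-D$ being $\pi$-ample---and this is exactly where the $\mathbb{Q}$-factoriality hypothesis on $X$ is used (so that $\pi_{*}A$ is $\mathbb{Q}$-Cartier); everything else is a routine pullback-and-perturb argument. The key conceptual input is that no non-klt center of $(X,B,\bM.)$ escapes $B^{=1}$, which is precisely what makes the pullback of $H$ already support every exceptional prime.
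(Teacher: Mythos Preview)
Your proof is correct and follows essentially the same approach as the paper. The paper invokes \cite{KM98}*{Lemma~2.62} to obtain the effective $\pi$-exceptional divisor $F$ with $-F$ $\pi$-ample, whereas you construct it directly as $D=\pi^{*}\pi_{*}A-A$ and invoke the negativity lemma; otherwise the two arguments are identical in structure (pull back $H$, check that all exceptional divisors land in $\Supp(H)$, then perturb by $-\epsilon D$ to recover relative ampleness over $Z$).
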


\begin{remark}
In the setup of Lemma~\ref{lemma patch}, the condition on the dlt-ness of the pair away from $B \sups =1.$ together with $\qq$-factoriality of $X$ guarantees that, when passing to a generalized dlt model of $(X,B,\bM.)$, we only extract divisors with center in $B \sups =1.$.
\end{remark}

\begin{proof}
Let $\pi \colon X' \rar X$ be as in the statement.
Let $H$ be a $q$-ample divisor that is fully supported on $B \sups =1.$.
Since $X$ is $\qq$-factorial, by~\cite{KM98}*{Lemma~2.62}, there is an effective divisor $F$ that is fully supported on the $\pi$-exceptional divisors such that $-F$ is $\pi$-ample.
Thus, for $0 < \epsilon \ll 1$, $\pi^\ast H-\epsilon F$ is ample over $Z$.
By assumption, the $\pi$-exceptional divisors all have center on $\Supp (H ) = \Supp (B \sups =1.)$.
Therefore, if $\epsilon$ is small enough, $\pi^\ast H-\epsilon F$ is effective.
By definition of dlt model, $\Supp {\rm Exc}(\pi) \subset \Supp((B')\sups =1.)$.
Thus, $\Supp(\pi^\ast H-\epsilon F) = \Supp ((B')\sups =1.)$, and the claim follows.
\end{proof}

\begin{lemma} 
\label{lemma supporto}
Let $(X,B,\bM.)/Z$ be a $\qq$-factorial generalized log canonical pair.
Assume that for some $0 < \epsilon \leq 1$, the generalized pair $(X,B \sups <1. + (1-\epsilon)B \sups =1.,\bM.)/Z$ is generalized klt.
Fix a generalized dlt model $(X^m,B^m,\bM.)/Z$ of $(X,B,\bM.)/Z$, and let $\pi \colon X^m \rar X$ denote the corresponding morphism.
Then, $\Supp(\pi^\ast (B \sups =1.))=\Supp((B^m)\sups =1.)$.
\end{lemma}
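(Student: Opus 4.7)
The plan is to unpack both sides of the claimed identity explicitly and match them component by component. Write $B^{=1} = \sum_i D_i$, so that $\Supp(\pi^*(B^{=1}))$ consists of the strict transforms $\pi^{-1}_* D_i$ together with all $\pi$-exceptional prime divisors whose center on $X$ is contained in $\Supp(B^{=1})$. On the other hand, by the construction in Theorem~\ref{generalized dlt model} we have $B^m = \pi^{-1}_*(B \wedge \Supp B) + E^m$ with $E^m$ the reduced $\pi$-exceptional divisor, and since $(X,B,\bM.)$ is generalized log canonical the generalized log discrepancies of the $\pi$-exceptional divisors are exactly zero; hence $(B^m)^{=1}$ is the sum of the $\pi^{-1}_* D_i$ together with all $\pi$-exceptional prime divisors. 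Thus the equality of supports reduces to showing that every $\pi$-exceptional prime divisor $E$ satisfies $c_X(E) \subseteq \Supp(B^{=1})$.

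This is the one real step, and it is where the klt hypothesis enters. Suppose for contradiction that $E$ is a $\pi$-exceptional prime divisor with $c_X(E) \not\subseteq \Supp(B^{=1})$. Set $U \coloneqq X \setminus \Supp(B^{=1})$, so the generic point of $c_X(E)$ lies in $U$. On $U$ the boundaries of the two generalized pairs $(X,B,\bM.)$ and $(X, B^{<1} + (1-\epsilon)B^{=1}, \bM.)$ coincide (they are both equal to $B^{<1}|_U$) and the nef part $\bM.$ is unchanged, so the restrictions of the two generalized pairs to $U$ are identical. Since the generalized log discrepancy of a divisorial valuation only depends on an arbitrarily small Zariski neighborhood of the generic point of its center, we conclude
\[
a_E(X,B,\bM.) = a_E(X, B^{<1} + (1-\epsilon)B^{=1}, \bM.).
\]
The right-hand side is strictly positive by the generalized klt assumption, while the left-hand side vanishes because $E$ is extracted by a generalized dlt model of the generalized lc pair $(X,B,\bM.)$. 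This contradiction completes the reduction, and the lemma follows.

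The only delicate point is the local nature of the discrepancy argument in the second paragraph; everything else is bookkeeping straight from the construction of the dlt model in Theorem~\ref{generalized dlt model}. No MMP or vanishing is required here, so the proof should be short.
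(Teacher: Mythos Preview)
Your proof is correct and follows exactly the same approach as the paper's, just with more detail: the paper's one-sentence proof simply notes that $\pi$ only extracts divisors appearing in $\Supp((B^m)^{=1})$ and that the generalized klt hypothesis forces $\pi(\mathrm{Exc}(\pi)) \subset B^{=1}$. Your explicit unpacking of both sides and the local discrepancy comparison on $U = X \setminus \Supp(B^{=1})$ is precisely the content behind that sentence.
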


\begin{proof}
This follows at once, since $\pi$ only extracts divisors appearing in $\Supp((B^m)\sups =1.)$ and the condition that $(X,B \sups <1. + (1-\epsilon)B \sups =1.,\bM.)/Z$ is generalized klt implies that $\pi({\rm Exc}(\pi)) \subset B \sups =1.$.
\end{proof}

\subsection{Canonical bundle formula} \label{section cbf}
In this section, we recall the statement of the {\em canonical bundle formula} for generalized log canonical pairs and we extend it to the relative setting. 
We refer the interested reader to~\cites{Amb04, Amb05, FG14} for the notation involved and a more detailed discussion about the topic, in the case of log canonical pairs.

\begin{definition}
\label{lc-trivial.def}
Let $(X, B)$ be a sub-pair.
A contraction $f \colon X \rar T$ of quasi-projective varieties is an \emph{lc-trivial fibration} if
\begin{itemize}
    \item[(i)] $(X,B)$ is a sub-pair with coefficients in $\qq$ that is sub-log canonical over the generic point of $T$;
    \item[(ii)] $\mathrm{rank} f_* \O X. (\lceil \mathbf{A}^\ast (X,B)\rceil)=1$, where $\mathbf{A}^\ast (X,B)$ is the b-divisor defined in Example~\ref{discr.div.ex}; and
    \item[(iii)] there exists a $\qq$-Cartier $\qq$-divisor $L_T$ on $T$ such that $\K X. + B \sim_\qq f^\ast  L_T$.
\end{itemize}
\end{definition}
Condition (ii) above is automatically satisfied if $B$ is effective over the generic point of $T$.

Given a sub-pair $(X,B)$ and an lc-trivial fibration $f \colon X \rar T$,
there exist $\mathbb{Q}$-b-divisors $\mathbf{B}$ and $\mathbf{N}$ over $T$ such that the following linear equivalence relation, known as the {\it canonical bundle formula}, holds
\begin{equation}
    \label{cbf.eqn}
    K_X+B \sim_{\mathbb{Q}} f^\ast(K_T+\mathbf{B}_{T}+\mathbf{N}_{T}).
\end{equation}
The b-divisor $\mathbf{B}$ is called the \emph{boundary part} in the canonical bundle formula; it is a canonically defined b-divisor.
Furthermore, if $B$ is effective, then so is $\mathbf{B}_T$.
The b-divisor $\mathbf{N}$ in turn is called the \emph{moduli part} in the canonical bundle formula, and it is in general defined only up to $\mathbb{Q}$-linear equivalence. 
The linear equivalence~\eqref{cbf.eqn} holds at the level of b-divisors: namely, 
\begin{align*}
\overline{(K_X+B)} \sim_\qq f^\ast (\mathbf{K}+\mathbf{B}+\mathbf{N}),
\end{align*} 
where $\mathbf{K}$ denotes the canonical b-divisor.
\newline
The moduli b-divisor $\mathbf{N}$ is expected to detect the variation of the restriction of the pair induced on fibers of the morphism $f$ by restricting $B$.

\begin{theorem}
\cite{FG14}*{cf.~Theorem 3.6}
\label{classic cbf}
Let $(X, B)/S$ be a sub-pair and let $f \colon (X,B) \rar T$ be an lc-trivial fibration.
Let $\mathbf{B}$ and $\mathbf{N}$ be the boundary and the moduli part of $f$, respectively.
Then, $\mathbf K + \mathbf B$ and $\mathbf N$ are $\qq$-b-Cartier b-divisors.
Furthermore, $\mathbf{N}$ is b-nef over $S$.
\end{theorem}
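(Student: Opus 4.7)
The plan is to reduce the statement, via base change and birational modification, to a situation where the moduli part is represented by the first Chern class of a Hodge bundle, and then invoke the Fujita--Kawamata semipositivity theorem. The b-Cartier statement for $\mathbf{K}+\mathbf{B}$ and $\mathbf{N}$ is in fact the deeper of the two conclusions, because it amounts to saying that after finitely many blow-ups of the base $T$, both b-divisors descend.

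First, I would set up the boundary b-divisor $\mathbf{B}$ in the canonical way: on each birational model $\sigma\colon T' \to T$, choosing a higher birational model $X' \to X$ dominating $T'$ with induced morphism $f' \colon X' \to T'$, declare the coefficient of a prime divisor $P \subset T'$ in $\mathbf{B}_{T'}$ to be $1-t_P$, where $t_P$ is the largest $t\in \mathbb{Q}$ such that $(X',B'+tf'^\ast P)$ is sub-lc over the generic point of $P$. Checking that this is independent of the choice of $X'$ and compatible under pullback makes $\mathbf{B}$ a well-defined $\mathbb{Q}$-b-divisor. Then I would define the moduli b-divisor by the identity $\mathbf{N} := \overline{L_T} - (\mathbf{K}+\mathbf{B})$ on $T$ and by the corresponding pullback formula on higher models; the main point is that \emph{some} birational model $T^\ast \to T$ realizes $\mathbf{K}+\mathbf{B}$ as the Cartier closure of its own trace, and then automatically $\mathbf{N}$ descends from the same model.

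To produce $T^\ast$, I would follow Ambro's strategy: using semistable reduction in codimension one (e.g.~via Abramovich--Karu, or Kawamata's covering trick) find a generically finite surjection $\tilde T \to T$ and a birational model of the fiber product so that the induced lc-trivial fibration $\tilde f \colon \tilde X \to \tilde T$ has unipotent monodromies at codimension-one points of $\tilde T$. Over $\tilde T$, classical Hodge-theoretic input identifies $\tilde f_\ast \omega_{\tilde X/\tilde T}(\lceil \mathbf{A}^\ast(\tilde X,\tilde B)\rceil)$, up to correcting twists, with the canonical extension of the bottom piece of a variation of Hodge structure on the smooth locus of $\tilde f$. In particular $\mathbf{N}_{\tilde T}$ is $\mathbb{Q}$-Cartier there, and descending under the finite cover via a Galois averaging argument produces a birational model $T^\ast$ of $T$ on which $\mathbf{K}+\mathbf{B}$ and $\mathbf{N}$ are both $\mathbb{Q}$-Cartier, proving the b-Cartier assertion.

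The main obstacle, as usual in canonical bundle formulas, is b-nefness of $\mathbf{N}$. Once one is on the semistable model $\tilde T$, I would invoke Fujita--Kawamata semipositivity (in the form extended by Kawamata, Kollár, and Viehweg) to deduce that the Hodge bundle representing $\mathbf{N}_{\tilde T}$ is nef, hence $\mathbf{N}_{\tilde T}$ is nef. Nefness then descends to $T^\ast$ because the cover $\tilde T \to T^\ast$ is finite and surjective, and once nef on $T^\ast$ the b-divisor $\mathbf{N}$ is b-nef. The relative version over $S$ is obtained by observing that all constructions are compatible with restriction to fibers of $T \to S$ and that Fujita--Kawamata semipositivity applies in the relative setting. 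I expect the Hodge-theoretic step, rather than the combinatorial bookkeeping of b-divisors, to be the real technical heart of the argument.
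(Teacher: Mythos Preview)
The paper does not prove this theorem: it is stated with a citation to \cite{FG14}*{cf.~Theorem 3.6} and used as a black box. There is therefore no ``paper's own proof'' to compare against.

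That said, your outline is a faithful sketch of the standard argument as it appears in Ambro's work and in \cite{FG14}: define $\mathbf{B}$ via log canonical thresholds, pass to a semistable model after a covering and birational modification, identify the moduli part with (a twist of) the bottom Hodge piece of a variation of Hodge structure, and deduce nefness from Fujita--Kawamata semipositivity. One point you might sharpen: the identification of $\mathbf{N}$ with a Hodge bundle goes through the rank-one condition (ii) in Definition~\ref{lc-trivial.def}, which is what forces the relevant Hodge bundle to be a line bundle rather than a higher-rank object; you implicitly use this but do not flag it. Otherwise the strategy is correct and is exactly what the cited reference does.
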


\begin{remark}
In the setup of Theorem~\ref{classic cbf}, let $T'$ be a model where the nef part $\mathbf N$ descends in the sense of b-divisors.
Then, $\mathbf N \subs T'.$ is nef over $S$. 
In particular, $(T, \bB T.,\mathbf{N})/S$ is a generalized sub-pair.
If $B \geq 0$, then $(T, \bB T.,\mathbf{N})/S$ is a generalized pair.
\end{remark}

A generalization of the canonical bundle formula to the category of generalized pairs was introduced in~\cite{Fil18}*{Theorem~1.4}, where Theorem~\ref{classic cbf} is extended to the case of generalized sub-pairs $(X,B+M)$ endowed with the analog for generalized pairs of an lc-trivial fibration.

\begin{definition}
\label{gen-lc-trivial.def}
Let $(X,B,\mathbf{M})/Z$ be a generalized sub-pair.
A contraction $f \colon X \rar T$ of quasi-projective varieties over $Z$ is a \emph{generalized lc-trivial fibration} if
\begin{itemize}
    \item[(i)] $(X,B,\mathbf{M})$ is a generalized sub-pair with coefficients in $\qq$ that is generalized sub-log canonical over the generic point of $T$;
    \item[(ii)] $\mathrm{rank} f_* \O X. (\lceil \mathbf{A}^\ast (X,B,\mathbf{M})\rceil)=1$, where $\mathbf{A}^\ast (X,B,\mathbf{M})$ is the b-divisor defined in Example~\ref{gen.discr.div.example}; and,
    \item[(iii)] there exists a $\qq$-Cartier divisor $L_T$ on $T$ such that $\K X. + B + \mathbf{M}_X \sim_\qq f^\ast  L_T$.
\end{itemize}
\end{definition}

As in Definition~\ref{lc-trivial.def}, condition (ii) in Definition~\ref{gen-lc-trivial.def} is automatically satisfied if $B$ is effective over the generic point of $T$.

We are able to adapt the proof of~\cite{Fil18}*{Theorem~1.4} with minor changes to further extended the canonical bundle formula for generalized pairs to the relative setting.
Hence, we will work in this broader context, and highlight the relevant modifications that need to occur in the proof of~\cite{Fil18}*{Theorem~1.4}.

Let $(X,B,\mathbf{M})/S$ be a generalized sub-pair over a quasi-projective variety $X$, and let $f \colon X \rar T$ be a generalized lc-trivial fibration over $S$.
Without loss of generality, we can assume that $\dim T > 0$. 
Fix a divisor $L_T$ on $T$ such that $\K X. + B + M \sim_\qq f^\ast L_T$. 
For any prime divisor $D$ on $T$, let $l_D$ be the generalized log canonical threshold of $f^\ast D$ with respect to $(X,B+M)$ over the generic point of $D$. 
Then, we define 
\begin{align*}
B_T \coloneqq \sum b_D D, \; N_T \coloneqq L_T - (\K T. + B_T),
\end{align*}
where $b_D \coloneqq 1-l_D$, so that
\begin{align*}
\K X. + B + M \sim_\qq f^\ast (\K T. + B_T + N_T).
\end{align*}
Given $\tilde X$ and $\tilde T$ higher birational models of $X$ and $T$, respectively, fitting in the following commutative diagram of morphisms
\begin{align*}
\xymatrix{
\tilde X  \ar[r]^{\phi} \ar[d]_{\tilde f}& X \ar[d]^{f} \\
\tilde T  \ar[r]^{\psi} & T
}
\end{align*}
we will denote by $(\tilde X, \tilde B  + \tilde M)$ the trace of the generalized sub-pair $(X,B+M)$ on $\tilde X$. 
Furthermore, we set $L_{\tilde T} \coloneqq \psi^\ast  L_{T}$. 
With this piece of data, we can define divisors $B_{\tilde T}$ and $M_{\tilde T}$ such that
\begin{align*}
\K \tilde X. + \tilde B + \tilde M \sim_\qq \tilde{f}^\ast (\K \tilde T. + B_{\tilde T} + N_{\tilde T}),
\end{align*}
$B_{T}= \psi_* B_{\tilde T}$, and $N_{T}= \psi_* N_{\tilde T}$. 
In this way, Weil b-divisors $\mathbf{B}$ and $\mathbf{N}$ are defined.
We write $\bB \tilde{T}.$ and $\mathbf N \subs \tilde{T}.$ for the traces of $\bB.$ and $\mathbf N$ on any higher model $\tilde{T}$.
When the setup is clear, we shall write $B \subs \tilde T.$ and $N \subs \tilde T.$ in place of $\mathbf{B} \subs \tilde T.$ and $\mathbf{N} \subs \tilde T.$, respectively.

In this setup, we have the following theorem, referred to as \emph{generalized canonical bundle formula}.

\begin{theorem}
\label{generalized canonical bundle formula}
Let $(X,B,\mathbf{M})/S$ be a generalized sub-pair. 
Let $f\colon X \rar T$ be a generalized lc-trivial fibration over $S$.
If $B$ is effective over the generic point of $T$, then the b-divisor $\mathbf N$ is $\qq$-b-Cartier and b-nef over $S$.
\end{theorem}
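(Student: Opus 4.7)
The plan is to follow the strategy of \cite{Fil18}*{Theorem~1.4}, tracking all auxiliary constructions so that nefness is preserved relative to $S$ rather than just absolutely. Since the classical canonical bundle formula (Theorem~\ref{classic cbf}) is already stated in the relative setting over $S$, the substantive work lies in reducing the generalized lc-trivial fibration to a classical one in a way compatible with the base $S$; once this reduction is done, the relative b-nefness is inherited directly from the classical statement.

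First, I would pass to a sufficiently high log resolution $\phi \colon X' \rar X$ on which $\mathbf{M}$ descends to a $\qq$-Cartier divisor $M' \coloneqq \mathbf{M}_{X'}$ that is nef over $S$, together with a birational model $\psi \colon T' \rar T$ such that $f$ induces a morphism $f' \colon X' \rar T'$ and the trace $(X',B',\mathbf{M})$ of $(X,B,\mathbf{M})$ satisfies $\K X'. + B' + M' \sim_\qq f'^\ast L_{T'}$ with $L_{T'} \coloneqq \psi^\ast L_T$. The key trick, following \cite{Fil18}, is to absorb the nef part $M'$ into the boundary: for a divisor $H_{T'}$ on $T'$ which is ample over $S$ and a small rational $\epsilon > 0$, the sum $M' + \epsilon f'^\ast H_{T'}$ is ample over $S$, so by Bertini one can choose a general effective $A'_\epsilon \sim_\qq M' + \epsilon f'^\ast H_{T'}$ whose support, together with that of $B'$, is simple normal crossing. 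Then $(X', B' + A'_\epsilon)$ is a classical sub-pair which is sub-log canonical over the generic point of $T'$, and $f'$ becomes a classical lc-trivial fibration over $S$ in the sense of Definition~\ref{lc-trivial.def}, with $\K X'. + B' + A'_\epsilon \sim_\qq f'^\ast(L_{T'} + \epsilon H_{T'})$.

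Applying Theorem~\ref{classic cbf} to $(X', B' + A'_\epsilon) \rar T'$ over $S$ yields a moduli b-divisor $\mathbf{N}^\epsilon$ which is $\qq$-b-Cartier and b-nef over $S$. Comparing the two canonical bundle formulas, the boundary b-divisor of the generalized lc-trivial fibration agrees with the boundary b-divisor of $(X', B' + A'_\epsilon) \rar T'$, since passing from $M'$ to the general $A'_\epsilon$ does not alter the generalized log canonical thresholds $l_D$ computing $\mathbf{B}$. Hence $\mathbf{N}$ and $\mathbf{N}^\epsilon - \epsilon\,\overline{H_{T'}}$ coincide as $\qq$-b-divisors; this simultaneously establishes that $\mathbf{N}$ is $\qq$-b-Cartier and, since the right-hand side is a sum of a b-divisor nef over $S$ with a $\qq$-b-Cartier correction which is anti-ample over $S$ by an arbitrarily small factor, its b-nefness over $S$ follows by letting $\epsilon \to 0^+$.

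The main obstacle is verifying that the boundary b-divisor $\mathbf{B}$ is genuinely independent of the auxiliary perturbation data $(H_{T'}, \epsilon, A'_\epsilon)$, in particular that replacing the nef $M'$ by a general effective $A'_\epsilon$ in its $\qq$-linear equivalence class leaves the log canonical thresholds $l_D$ computing $\mathbf{B}$ unchanged. This invariance is a statement local on $T$ about general members of $\qq$-linear systems, and is therefore insensitive to the structure morphism to $S$; it is precisely the content of the analogous step in \cite{Fil18}. Once this invariance is in hand, the entirety of the argument transfers to the relative setting by systematically replacing \emph{ample} with \emph{ample over $S$} and \emph{nef} with \emph{nef over $S$}, and the conclusion follows.
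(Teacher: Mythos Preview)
Your reduction has a genuine gap at the step where you claim that $M' + \epsilon f'^\ast H_{T'}$ is ample over $S$. Since $f' \colon X' \rar T'$ has positive-dimensional fibres, the pullback $f'^\ast H_{T'}$ is numerically trivial on those fibres and hence is only nef, not ample, over $S$. Adding it to $M'$, which is merely nef over $S$, produces a divisor that is still only nef over $S$ in general; there is no reason for it to be ample or even semi-ample. Consequently you cannot invoke Bertini to produce a general effective $A'_\epsilon \sim_\qq M' + \epsilon f'^\ast H_{T'}$ with the required simple normal crossing properties, and the reduction to Theorem~\ref{classic cbf} breaks down at this point.

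This is precisely the obstruction the paper's proof is designed to overcome. The argument there first treats the case where $\mathbf{M}$ is b-semi-ample (Step~1), then b-semi-ample over $T$ (Step~2, which is essentially your argument under that extra hypothesis). To reach this situation from the general case, the paper runs a $(\K X. + B^h)$-MMP over $T$ terminating with a Mori fibre space $X'' \rar U$ (Step~4); on $X''$ one has $\rho(X''/U)=1$ and $\bM X''.$ ample over $U$ (Step~3), and only then can $\bM.$ be approximated by b-divisors semi-ample over the base so as to feed into Step~2. There is a second, subtler issue in your proposal: the assertion that a \emph{single} general $A'_\epsilon$ leaves all the thresholds $l_D$ unchanged is not justified at the level of b-divisors, since one must control infinitely many valuations on higher models of $T$. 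The paper handles this in Step~1 by taking an infimum of boundary parts (equivalently, a supremum of moduli parts) over the family of admissible $\Delta' \in \mathcal{U}$ and then invoking weak semi-stable reduction to pin down a single model on which the resulting $\mathbf{N}$ descends.
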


Below, we shall summarize the relevant changes to the proof of~\cite{Fil18}*{Theorem~1.4} in order to drop the assumption on the projectivity of the pairs and on $S = \Spec(\cc)$.
We refer to~\cite{Fil19} for a detailed proof.

\begin{proof}[Sketch of proof]
Throughout the proof, $S$ will be a quasi-projective variety, with no further assumption.
For the reader's convenience, we subdivide the proof into several steps.

\medskip

{\bf Step 1:} 
{\it In this step we show that the statement of the theorem holds if we assume that $\mathbf{M}$ is b-semi-ample}.
\newline
Let $X'$ be a model where $\mathbf{M}$ descends.
For brevity, we set $M' \coloneqq \mathbf{M} \subs X'.$.
Let $h \colon X' \rar T$ be the induced morphism.
Let $\mathcal{U} \subset |M'|_\qq$ the set consisting of $\qq$-divisors $0 \leq \Delta' \sim_\qq M'$ such that $(X',B'+\Delta')$ is sub-log canonical over the generic point of $T$.
As $M'$ is semi-ample, $\mathcal{U}$ is non-empty.
Given $\Delta' \in \mathcal{U}$, we can apply Theorem~\ref{classic cbf} to $(X',B'+\Delta') \rar T$, thus obtaining a $\qq$-b-Cartier b-divisor $\mathbf N \sups \Delta'.$ that is b-nef over $S$.
As discussed in~\cite{Fil18}*{Remark 4.8}, we have
\begin{align*}
\quad \bB .= \inf \subs \Delta' \in \mathcal{U}. \bB .\sups \Delta'., \; 
\mathbf N= \sup \subs \Delta' \in \mathcal{U}. \mathbf N \sups \Delta'..
\end{align*}
We wish to apply weak semi-stable reduction to argue that $\mathbf N$ is a $\qq$-b-Cartier b-divisor that descends to a model satisfying explicit properties, see the proof of~\cite{Fil18}*{Theorem~4.13}.
The proof of the b-nefness of $\mathbf N$ over $S$ goes then through as explained in the proof of~\cite{Fil18}*{Theorem~4.15}.
One first shows that the statement is true if $\dim T = 2$, cf.~\cite{Fil18}*{Remark 4.14}.
To conclude, we reduce to the case of dimension $2$ by taking general hyperplane cuts on $T$, cf. the proof of~\cite{Fil18}*{Theorem~4.15}.

\medskip

{\bf Step 2:} 
{\it 
In this step, we show that the theorem holds when $\mathbf{M}$ is b-semi-ample over $T$}.
\newline
Let $X'$ and $M'$ be the objects constructed in Step 1.
Let $A$ be a divisor on $T$ which is ample over $S$. 
Since $M'$ is nef over $S$, and hence over $T$, and it is semi-ample over $T$, $M'+ \epsilon h^\ast  A$ is semi-ample over $S$ for any $\epsilon > 0$, cf.~\cite{Fil18}*{Proposition~4.7}.
Fix $\epsilon > 0$ with $\epsilon \in \qq$.
Let $H$ be an ample divisor on $S$ and write $p \colon X' \rar S, \; s\colon T \rar S$. 
Hence, for $n=n(\epsilon) \gg 0, \; M'+\epsilon h^\ast A + n p^\ast  H$ is semi-ample.
We consider the generalized sub-pairs $(X',B',\mathbf M +\epsilon \overline{ g^\ast  A})/S$ and $(X',B',\mathbf M +\epsilon \overline{g^\ast  A} + n \overline{p^\ast H})/S$.
Let $\bB .^\epsilon$, $\mathbf N ^\epsilon$ and $\bB .^{\epsilon,n}$, $\mathbf N ^{\epsilon,n}$ the b-divisors respectively induced on $T$.
By Remark~\ref{remark valuations nef part}, the generalized discrepancies of $(X,B,\mathbf M)/S$, $(X',B',\mathbf M +\epsilon \overline{g^\ast  A})/S$, and $(X',B',\mathbf M +\epsilon \overline{ g^\ast  A }+ n \overline{p^\ast H})/S$ agree by construction.
Thus, we have
\begin{align*}
\bB . = \bB . ^\epsilon = \bB . \sups \epsilon,n.,
\qquad 
\mathbf N \sups \epsilon,n.= \mathbf N ^\epsilon + n \overline{s^\ast H}, 
\qquad 
\mathbf N ^\epsilon = \mathbf N + \epsilon \overline{A}.
\end{align*}
Therefore, if for some $(\epsilon,n)$ the b-divisor $\mathbf N \sups \epsilon,n.$ is $\qq$-b-Cartier, then so are $\mathbf N$ and $\mathbf N ^{\epsilon'}$ for any $\epsilon'>0$ and all these b-divisors descend to the same model of $T$.
We can then apply Step 1 to the generalized pair $(X',B',\mathbf M + \epsilon \overline{g^\ast  A} + n\overline{ p^\ast H})/S$ together with the morphism $h \colon X' \rar T$ and obtain $\qq$-b-Cartier b-divisor $\mathbf N \sups \epsilon,n.$ which is b-nef$/S$. 
Since we have $\mathbf N \sups \epsilon,n. = \mathbf N \sups \epsilon. + n\overline{s^\ast H}$ and $\overline{s^\ast H}$ is trivial over $S$, $\mathbf N ^\epsilon$ is a $\qq$-b-Cartier b-divisor that is b-nef$/S$.
We let $\epsilon>0$ vary and approach 0.
Thus, as $\mathbf N$ is a limit of $\qq$-b-Cartier b-divisors that all descend to the same model of $T$ and are all b-nef over $S$, then $\mathbf N$ is a $\qq$-b-Cartier b-divisor that is b-nef over $S$.

\medskip

{\bf Step 3:} 
{\it
In this step we show that the statement of the theorem holds when $X$ is $\qq$-factorial klt, $\rho(X/T)=1$ and {$\mathbf{M}_X$} is relatively ample over $T$}.
\newline
The proof of this case goes through as in~\cite{Fil18}*{Lemma~5.2} and reduces to Step 2.
More precisely, since $\bM.$ is b-nef over $S$ and $\bM X.$ is ample over $T$, we can approximate $\bM.$ with b-divisors that are b-nef over $S$ and b-semi-ample over $T$.
The fact that $X$ is a $\qq$-factorial klt variety allows us to moving the difference between $\bM.$ and its approximation to the boundary part of a generalized pair.
So, we can regard $(X,B,\bM.)$ as being approximated by generalized sub-pairs to which Step 2 can be applied.
Since the boundary parts of these approximations can be arranged to have the same support, we can apply the observation in Step 1 about the explicit description of the model where the b-divisors descend.
This allows us to conclude, as $\mathbf{N}$ is realized as limit of $\qq$-b-Cartier b-divisors that descend to the same model, and hence it is $\qq$-b-Cartier itself.

\medskip

{\bf Step 4:} 
{\it 
In this step we show that the statement of the theorem holds in its full generality}.
\newline
By Theorem~\ref{generalized dlt model}, we can assume that $X$ is $\qq$-factorial and $(X,B^h)$ is dlt.
If $\mathbf{M}_X$ is numerically trivial along the generic fiber of $X \rar T$, we can reduce to Theorem~\ref{classic cbf}.
If $\mathbf{M}_X$ is not numerically trivial along the generic fiber of $X \rar T$, $(X,B^h)$ is not pseudo-effective over $T$.
Thus, we can run a $(\K X. + B^h)$-MMP over $T$ with scaling, which terminates with a Mori fiber space $X'' \rar U$.
Thus, we can apply Step 3 to $(X'',B'',\mathbf{M})/S$ and $X'' \rar U$.
By~\cite{Fil18}*{Lemma~5.1}, we can then conclude inductively.
\end{proof}

\subsection{Standard $\pr 1.$-links and $\mathbb P^1$-linkage} 
\label{subsect P1 link}
In this section we recall the notion of standard $\pr 1.$-link and extend it to generalized pairs.
The definition of pull-back for b-divisors can be found at the end of Section~\ref{b-div.subs}.

\begin{definition}
\label{def p1 link}
A generalized pair 
$(X,D_1+D_2+\Delta,\mathbf{M})/S$
endowed with a morphism 
$\pi \colon X \rar T$ 
over 
$S$ 
is a {\em standard $\pr 1.$-link}
if
\begin{enumerate}
    \item[(0)]
$D_1$ and $D_2$ are distinct reduced prime divisors and $\lfloor \Delta \rfloor =0$;
    \item
$\K X. + D_1 + D_2 + \Delta \sim \subs \qq,\pi. 0$;
    \item
there exists a $\mathbb{Q}$-b-Cartier $\mathbb{Q}$-b-divisor $\mathbf{N}$ on $T$ such that $\bM. \sim_{\mathbb Q} \pi^* \mathbf{N}$;
    \item
$\pi\vert_{D_i} \colon D_i \rar T$ is an isomorphism for $i=1,2$;
    \item
$(X,D_1+D_2+\Delta,\mathbf{M})/S$ is generalized plt; and
    \item
every reduced fiber of $\pi$ is isomorphic to $\pr 1.$.
\end{enumerate}
\end{definition}

The following lemma shows that condition (2) in Definition~\ref{def p1 link} is implied by the condition $\bM X. \sim_{\mathbb Q, T} 0$.
We note, though, that the two conditions are not equivalent, since condition (2) in Definition~\ref{def p1 link} only restricts the behavior of $\mathbf M$ along the general fiber of the contraction $\pi$.

\begin{lemma}
\label{condition.2.lemma}
Let $f \colon X \rightarrow T$ be a projective contraction over a base scheme $S$ and $\bM.$ be a $\mathbb Q$-b-Cartier $\mathbb Q$-b-divisor on $X$ that is b-nef over $S$.
If $\bM X. \sim_{\mathbb Q,T} 0$, then there is a $\mathbb{Q}$-b-Cartier b-divisor $\mathbf{N}$ on $T$ such that $\bM. \sim_{\mathbb Q}f^* \mathbf N$.
\end{lemma}

\begin{proof}
Let $X^r \rar X$ be a resolution of $X$ such that $\bM.$ descends to $X^r$.
Let $X^f \rar T^f$ be a flattening of $X^r \rar T$, see \cite{raynaud_gruson}*{Th\'{e}or\`{e}me 5.2.2}, and let $\widetilde{T}$ be a resolution of $T^f$.
Let $\widetilde{X}$ be the normalization of the main component of $X^f\times_{T^f}\widetilde{T}$.
This ensures that $\widetilde{X}\to\widetilde{T}$ still has equidimensional fibers.
Since $\bM X.$ is torsion along the generic fiber of $X \rightarrow T$, the same holds true for $\bM \widetilde{X}.$ along the generic fiber of $\widetilde{X}\to\widetilde{T}$ by the negativity lemma.
Thus, by the proof of \cite{Wit17}*{Lemma 2.18}, under our assumptions,~\footnote{We note that \cite{Wit17}*{Lemma 2.18} holds even without assuming the projectivity of the varieties involved. In that case, using the same notation as in the statement of {\it op. cit.} it suffices to assume that the $\mathbb Q$-divisor $L$ is relatively nef.}
it follows that $\bM {\widetilde{X}}. \sim \subs \qq,\widetilde{T}. 0$.
Since $\mathbf M$ is the b-Cartier closure of $\mathbf M_{\widetilde{X}}$, the conclusion of the statement follows at once.
\end{proof}

\begin{remark}
Conditions (1), (3), and (5) of Definition~\ref{def p1 link} imply that $\Delta$ is vertical over $T$.
Moreover, up to replacing $\bM.$ with $\pi^*\mathbf{N}$ in its $\mathbb Q$-linear equivalence class as b-divisor,
the composition $\pi\vert_{D_2}^{-1} \circ \pi\vert_{D_1}$ induces an isomorphism of generalized klt pairs
\begin{align*}
(D_1, \Delta \subs D_1.,\mathbf{M}\vert_{D_1})/S \simeq (D_2, \Delta \subs D_1.,\mathbf{M}\vert_{D_2})/S.
\end{align*}
\end{remark}

\begin{definition}
\label{link.def}
Let $(X, B,\mathbf{M})/S$ be generalized dlt.
Assume that there is a morphism $\pi \colon X \rar T$ over $S$ such that $\K X. + B + M \sim \subs \qq,\pi. 0$.
Let $Z_1$, $Z_2$ be two generalized log canonical centers.
\begin{enumerate}
    \item 
We say that $Z_1$ and $Z_2$ are {\em directly $\pr 1.$-linked} if there is a generalized log canonical center $W$ (alternatively, $W=X$ itself) satisfying the following properties:
\begin{enumerate}
    \item 
$Z_i \subset W$, $i=1, 2$;

    \item 
$\pi (W) = \pi (Z_1) = \pi (Z_2)$; and

    \item
over a non-empty open subset of
$\pi(W)$, the generalized pair $(W,B_W+N_W)$ induced by generalized adjunction onto $W$ is birational to a standard $\pr 1.$-link, with the $Z_i$ mapping to the two horizontal sections of the $\pr 1.$-link structure.
\end{enumerate}
\item
We say that $Z_1$ and $Z_2$ are {\em $\pr 1.$-linked} if either  
$Z_1=Z_2$
or there exists a sequence of (distinct) generalized log canonical centers 
$Z_1', \ldots , Z_n'$ 
such that 
$Z_1 ' = Z_1$, 
$Z_n ' =Z_2$ 
and 
$Z_i'$ 
is directly 
$\pr 1.$-linked 
to 
$Z_{i+1}'$ 
for 
$i=i, \ldots , n-1$.
\end{enumerate}
\end{definition}

It is an immediate consequence of the previous definition that every $\pr 1.$-linking defines a birational map between $(Z_1,B \subs Z_1. + N \subs Z_1.)$ and $(Z_2, B \subs Z_2. + N \subs Z_2.)$.

\begin{remark}\label{selflink}
With the notation and assumptions of Definition~\ref{link.def}, if $Z$ is a generalized log canonical center, then $Z$ is never directly $\mathbb{P}^1$-linked to itself:
indeed, as $(X, B+M)$ is generalized dlt, by adjunction, any generalized log canonical center $Z_1$ directly $\mathbb{P}^1$-linked to $Z$ must be distinct from $Z$ itself.
Therefore, being directly $\pr 1.$-linked is not a reflexive relation.
Furthermore, considering the log Calabi--Yau surface $(\mathbb{P}^1\times \mathbb{P}^1, B)$, $B\coloneqq \{0\} \times \mathbb{P}^1+\{\infty\} \times \mathbb{P}^1 + \mathbb{P}^1 \times \{0\} + \mathbb{P}^1 \times \{\infty\}$, it is immediate to see that the relation is not transitive either.
The definition of $\mathbb P^1$-linkage provides instead an equivalence relation which is the smallest equivalence relation that includes that of direct $\mathbb P^1$-linkage.
\end{remark}

\begin{remark}\label{remark minimality}
A generalized log canonical center $Z_1$ for a generalized dlt pair $(X, B+ M)/S$ is minimal, with respect to inclusion, if and only if the generalized pair $(Z_1,B \subs Z_1., \mathbf{N})$ induced by adjunction along $Z_1$ is generalized klt, cf.~\cite{BZ16}*{Definition~4.7}.
If $(Z_1,B \subs Z_1. + N \subs Z_1.)$ is generalized klt and $Z_2$ is $\pr 1.$-linked to $Z_1$, then also $(Z_2,B \subs Z_2. + N \subs Z_2.)$ is generalized klt, cf.~\cite{Kol13}*{Corollary~4.35}.
In particular, $Z_1$ is a minimal generalized lc center if and only if so is $Z_2$.
\end{remark}

The following example shows that in Definition~\ref{link.def}.1.(c) the birational map between the generalized log canonical center $W$ and the standard $\mathbb P^1$-link may not be a morphism, that is, it may not be defined everywhere.

\begin{example}
We follow the notation of Definition~\ref{link.def}.
Let us take 
$X= \mathbb P^1 \times \mathbb P^1=W$, 
$S= \mathrm{Spec}(k)$,
$T= \mathbb P^1$
and the projection 
$pr_1 \colon \mathbb P^1 \times \mathbb P^1 \to \mathbb P^1$
onto the first copy
as 
$\pi$.
Moreover, set 
$B \coloneqq  H+V+C$,
$\mathbf M=0$,
where 
$H= \mathbb P^1 \times \{ 0 \}=Z_1$,
$V= \{ 0 \} \times \mathbb P^1$, 
and 
$C \in \vert \mathcal O_{\mathbb P^1 \times \mathbb P^1}(1, 1)\vert=Z_2$
is a general element.
Thus,
$pr_1(C)=pr_1(H)=pr_1(X)=T$, and $C$, $H$ are directly $\mathbb P^1$-linked since they are sections of $\pi$.
Nonethelss, any birational morphism to a standard $\mathbb P^1$-link cannot be taken to be everywhere defined since $C$ and $H$ intersect in $X$, whereas in the definition of standard $\mathbb P^1$-link the two sections are disjoint, see (0) in Definition~\ref{def p1 link}.
\end{example}

\subsection{Dual complexes for generalized log canonical pairs.}
\label{dual.compl.gen.ssect}
We recall the notion of dual complex of a simple normal crossing variety.

\begin{definition}  
\label{stratum.def}
Let $E$ be a simple normal crossing variety defined over a field $k$ with irreducible components $\lbrace E_i | i \in I \rbrace$.
A {\em stratum} $F$ of $E$ is any irreducible component $F$ of $\cap \subs i \in J. E_i$ for some $J \subset I$.
\end{definition}

Given a simple normal crossing variety $E= \bigcup_{i \in I} E_i$ and a stratum $F \subset \bigcap \subs i \in J. E_i$ of $E$, for any $j \in J$ there is a unique irreducible component $F_j$ of $\bigcap \subs i \in J \setminus \lbrace j \rbrace. E_i$ that contains $F$. 
Using this observation, it is possible to construct a regular $\Delta$-complex, in the sense of~\cite{Hat02}*{page 103}, that encodes the combinatorial structure of the strata of $E$.

\begin{definition}
\label{dual.compl.def}
Let $E$ be a simple normal crossing variety defined over a field $k$ with irreducible components $\lbrace E_i | i \in I \rbrace$.
The {\em dual complex of $E$}, denoted by $\mathcal{D}(E)$, is a CW-complex whose vertices are labeled by the irreducible components of $E$ and for every stratum $F \subset \bigcap \subs i \in J. E_i$ we attach a $(|J|-1)$-dimensional cell $C_F$ by attaching the facet corresponding to the inclusion $J \setminus \{j\}$ to the cell corresponding to $F_j$.
\end{definition}
Let $(X, B, \mathbf{M})/S$ be a generalized log canonical pair.
Consider a log resolution $f \colon X'  \rar X$ of $(X,B)$ where $\mathbf{M}$ descends.
In particular, the support $f_\ast^{-1}B + {\rm Exc}(f)$ is a simple normal crossing divisor and we can write
\begin{align*} 
K_{X'} + B' + \mathbf{M}_{X'}= f^\ast(K_X+B+M),
\end{align*}
Hence, we can define the dual complex $\mathcal{D}((B') \sups =1.)$ of the simple normal crossing variety $(B') \sups =1.$ as in Definition~\ref{dual.compl.def}.

\begin{definition} 
\label{def dual complex}
Let $(X, B, \mathbf{M})/S$ be a generalized log canonical pair, and let $f \colon X' \rar X$ be a log resolution of $(X,B)$ where $\mathbf{M}$ descends.
The {\em dual complex} $\mathcal{DMR}(X,B, \mathbf M)$ of $(X, B, \mathbf{M})/S$ is the PL-homeomorphism class of the $\Delta$-complex $\mathcal{D}((B') \sups =1.)$ constructed above.
\end{definition}

Let $g \colon X'' \rar X$ be a different log resolution of $(X,B)$ where $\mathbf{M}_{X''}$ descends, and write
\begin{align*}
K_{X''} + B'' + \mathbf{M}_{X''}= g^\ast(K_X+B+M).
\end{align*}
As for $(B') \sups =1.$, we can define the dual complex $\mathcal{D}((B'') \sups =1.)$ of $(B'') \sups =1.$.
By construction, the sub-pairs $(X',B')$ and $(X'',B'')$ are crepant birational to each other, as we have chosen both log resolutions $f$ and $g$ so that $\mathbf{M}$ descends to $X'$ and $X''$.
Hence, using the weak factorization theorem for the birational map $X' \dashrightarrow X''$, it is possible to prove that the complexes $\mathcal{D}((B') \sups =1.)$ and $\mathcal{D}((B'') \sups =1.)$ are PL-homeomorphic to each other, see~\cite{dFKX}*{Proposition~11} for full details of the argument.

\begin{remark} 
\label{remark dual complex gpair to pair}
In the setup of Definition~\ref{def dual complex}, if we further assume that $(X,B, \mathbf{M})$ is $\qq$-factorial and generalized dlt, then every log canonical place of $(X,B+M)$ is a log canonical place of $(X,B)$, and vice versa, cf. Definition~\ref{gen.dlt.def}.
Thus, in this case we have $\mathcal{DMR}(X,B)=\mathcal{DMR}(X,B, \mathbf M)$.
\end{remark}

\subsection{Dual complex of non-log canonical pairs} \label{section dual cplx non lc}

We extend the definition of dual complex to generalized pairs that are not necessarily generalized log canonical.
To this end, we will use dlt models of generalized log pairs, cf.~\S~\ref{gen.dlt.mod.ssect} and the notation defined there.

\begin{definition} 
\label{def dual complex nonlc}
Let $(X, B, \mathbf{M})/S$ be a generalized pair.
Assume that $(X, B, \mathbf{M})/S$ is not generalized log canonical.
The \emph{dual complex} $\mathcal{DMR}(X,B, \mathbf M)$ of $(X, B, \mathbf{M})/S$ is the simple homotopy equivalence class of the $\Delta$-complex $\mathcal{D}((B^{m})^{=1})$, where $(X^m,B^m,\bM.)$ is a generalized dlt model of $(X,B,\bM.)$.
\end{definition}

Let $(X,B, \mathbf{M})$ a generalized pair that is not generalized log canonical, we ought to show that Definition~\ref{def dual complex nonlc} is independent of the choice of a generalized dlt model of $(X,B, \mathbf{M})$, or, equivalently, that the dual complexes of any two generalized dlt models of $(X,B, \mathbf{M})$ are simple homotopy equivalent.

\begin{remark}
In Definition~\ref{def dual complex}, the dual complex of a generalized log canonical pair was defined by considering the PL-homeomorphism class of the dual complex.
In the non-log canonical case, we are bound to use the weaker notion of simple-homotopy equivalence class.
Nevertheless, this notion is good enough to discuss the collapsibility of $\mathcal{DMR}(X,B, \mathbf M)$ and to compute its cohomology.
\end{remark}

\begin{lemma}
\label{well.posed.def}
Assume the same notations and assumptions introduced above.
Consider two generalized dlt models of $(X,B,\bM.)$, denoted by $\rho_i \colon (X_i^m,B_i^m, \mathbf{M}) \rar X$, $i=1, 2$.
Then $\mathcal{D}((B_1^m)\sups =1.)$ and $\mathcal{D}((B_2^m)\sups =1.)$ are simple homotopy equivalent.
\end{lemma}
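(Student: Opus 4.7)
The plan is to compare the two dlt models through a common higher model, and to show that the induced birational changes correspond to elementary collapses and expansions of the dual complex, thereby yielding a simple homotopy equivalence.

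First, I would construct a smooth variety $W$ together with projective birational morphisms $\sigma_i \colon W \to X_i^m$ for $i=1,2$ such that $W$ is a log resolution of $(X_i^m, B_i^m)$ for both $i$, $W$ is a log resolution of $(X,B)$, and $\mathbf{M}$ descends on $W$. Such $W$ exists by Hironaka. On $W$, I would write $K_W + B_W + \mathbf{M}_W = \pi^*(K_X + B + \mathbf{M}_X)$ where $\pi \colon W \to X$, and define $D_W$ to be the reduced divisor whose components are exactly those prime divisors $E$ of $W$ satisfying $a_E(X,B,\mathbf{M}) \leq 0$; this includes every strict transform on $W$ of a component of $(B_i^m)^{=1}$, together with any $\sigma_i$-exceptional divisor that happens to be a non-klt place. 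The divisor $D_W$ is simple normal crossing, so $\mathcal{D}(D_W)$ is well-defined.

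The main step is to show that $\mathcal{D}((B_i^m)^{=1})$ is simple homotopy equivalent to $\mathcal{D}(D_W)$ for each $i$; combining the two equivalences will conclude the proof. For this I would adapt the argument of \cite{dFKX}*{Proposition~11} together with its non-lc refinement by Nakamura in \cite{Nak19}. By weak factorization, the birational map $\sigma_i \colon W \dashrightarrow X_i^m$ factors as a sequence of blowups and blowdowns with smooth centers that have simple normal crossings with the boundaries appearing at each step. I would analyze each elementary step: if the blown-up center is not contained in the relevant non-klt locus, the dual complex is unchanged; if the center is a stratum of the non-klt divisor, the exceptional divisor has zero generalized log discrepancy and the corresponding cell is attached along the star of a simplex, so it can be collapsed by an elementary collapse; finally, if the center is strictly contained in a stratum of the non-klt divisor but not itself a stratum, the exceptional divisor has strictly positive generalized log discrepancy and therefore does not enter the dual complex at all. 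The condition that $\mathbf{M}$ descends on every intermediate model (which one can arrange during weak factorization, possibly by further blowups) ensures that generalized log discrepancies are computed by honest pullback of $K_X + B + \mathbf{M}_X$, and thus behave additively as in the classical case.

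The hard part will be verifying in the generalized setting that blowups whose center is not a stratum but is contained in a stratum of $D_W$ introduce divisors of strictly positive log discrepancy, and that the combinatorial effect on the dual complex is genuinely an elementary collapse rather than a mere homotopy equivalence. This requires tracking the multiplicities along the center with respect to both the boundary $B_W$ and the nef divisor $\mathbf{M}_W$, and using the inequality of coefficients in the generalized adjunction formula to rule out unwanted contributions to $D_W$. Once this combinatorial analysis is in place, the argument of \cite{Nak19} goes through essentially verbatim, and both comparisons $\mathcal{D}((B_i^m)^{=1}) \simeq \mathcal{D}(D_W)$ follow simultaneously, completing the proof.
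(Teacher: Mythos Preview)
Your approach has a genuine gap. You propose to apply weak factorization to $\sigma_i \colon W \to X_i^m$, but weak factorization is a statement about birational maps between \emph{smooth} varieties, and $X_i^m$ is only $\qq$-factorial, not smooth. A generalized dlt model is obtained by running an MMP from a log resolution, and the resulting variety is typically singular; there is no way to factor $W \to X_i^m$ into blowups and blowdowns along smooth centers in snc position. This is precisely why, already in the classical non-lc case, the comparison of dlt models in \cite{Nak19} is done by tracking the dual complex through MMP steps (divisorial contractions and flips), not via weak factorization. Your ``hard part'' paragraph does not address this: the discrepancy analysis you outline is the easy direction (blowups on the smooth side), not the passage down to the singular dlt model.

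The paper sidesteps this entirely by a different idea: it reduces to the classical case rather than reproving it in the generalized setting. After adding a b-Cartier closure of an ample divisor to make $\mathbf{M}$ b-nef and b-big, one writes $\mathbf{M}_{X'} \sim_\qq A'_k + E'_k$ on a high enough log resolution $X'$, with $A'_k$ big and semi-ample and $E'_k$ small enough that adding it does not change which divisors have coefficient $\geq 1$. Setting $\Gamma' \coloneqq A'_k + E'_k$ and pushing forward, one obtains an honest pair $(X,B+\Gamma)$ and pairs $(X_i^m,B_i^m+\Gamma_i^m)$ with the same non-klt places as the original generalized objects. One then takes dlt models $(X_i'',B_i''+\Gamma_i'')$ of $(X_i^m,B_i^m+\Gamma_i^m)$, checks they are simultaneously dlt models of the single pair $(X,B+\Gamma)$, and invokes \cite{Nak19}*{Proposition~2.14} once. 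Finally, since $X_i'' \to X_i^m$ only extracts divisors of log discrepancy $\leq 0$ for the dlt pair $(X_i^m,B_i^m)$, the comparison $\mathcal{D}((B_i^m)^{=1}) \simeq \mathcal{D}((B_i''+\Gamma_i'')^{=1})$ follows from \cite{dFKX}*{Proposition~11}. The point is that all the delicate MMP/dual-complex tracking is outsourced to the existing results for classical pairs.
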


By Remark~\ref{remark valuations nef part}, considering the pair $(X,B, \mathbf{M} + \overline{H})$, where $H$ is a suitable ample divisor on $X$, $\mathbf{M} + \overline{H}$ is b-nef and b-big.
Hence, up to subsituting $\mathbf{M}$ with $\mathbf{M}+ \overline{H}$, we can assume that $\mathbf{M}$ is b-nef and b-big.
Now, let $\pi \colon X' \rar X$ be a log resolution of $(X,B)$ where $\mathbf{M}$ descends.
We will assume that $\pi$ factors through both $X_1^m$ and $X_2^m$.
Let $\pi_i \colon X' \rar X_i^m$ denote the corresponding morphisms for $i=1,2$.
\begin{lemma}
\label{lem:big.nef.b-div}
With the same notations and assumptions as above.
If $X'$ is a sufficiently high model of $X$, then there exists an effective $\qq$-divisor $E'$ such that the following conditions hold:
\begin{enumerate}
    \item 
$\Supp(E') \cup \Supp(B')$ is simple normal crossing; and

    \item 
for every sufficiently large positive integer $k \gg 1$, there exists a big and semi-ample $\qq$-divisor $A'_k$ such that $\mathbf{M} \subs X'. \sim_\qq A'_k + E'_k$, where $E'_k \coloneqq \frac{E'}{k}$.
\end{enumerate}
\end{lemma}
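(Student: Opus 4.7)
The plan is to apply Kodaira's lemma to the nef and big divisor $\mathbf{M}_{X'}$, pass to a higher log resolution so that everything ends up in simple normal crossing position, then perturb the Kodaira decomposition to obtain a genuinely ample (not merely semi-ample) divisor on the new model, and finally use the classical convex-combination trick to produce the decomposition claimed in (2).

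Concretely, since $\mathbf{M}$ has already been arranged to be b-nef and b-big in the preceding paragraph, $\mathbf{M}_{X'}$ is nef and big on any model $X'$ on which it descends. By Kodaira's lemma there exist an ample $\qq$-divisor $A_0$ and an effective $\qq$-divisor $E_0$ on $X'$ with $\mathbf{M}_{X'} \sim_\qq A_0 + E_0$. A priori $\Supp(E_0) \cup \Supp(B')$ need not be simple normal crossing, so I would pass to a further log resolution $h \colon Y \rar X'$ chosen so that $\Supp(h^{-1}_* E_0) \cup \Supp(h^{-1}_* B') \cup \operatorname{Exc}(h)$ has simple normal crossing support; such an $h$ exists by Hironaka, and I can further arrange that $Y \rar X$ remains a log resolution on which $\mathbf{M}$ descends with $\mathbf{M}_Y = h^* \mathbf{M}_{X'}$. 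Then $\mathbf{M}_Y \sim_\qq h^*A_0 + h^*E_0$.

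The pullback $h^*A_0$ is only semi-ample on $Y$, not ample, which is not enough for condition (2). To upgrade it, I would use the standard fact that, $h$ being a projective birational morphism between $\qq$-factorial varieties (both smooth by construction), there exists an effective $h$-exceptional $\qq$-divisor $F$ such that $-F$ is $h$-ample, cf.~\cite{KM98}*{Lemma~2.62}. Then, for any sufficiently small $\delta \in \qq_{>0}$, the divisor $A \coloneqq h^*A_0 - \delta F$ is ample on $Y$, and setting $E \coloneqq h^* E_0 + \delta F$ yields
\[
\mathbf{M}_Y \sim_\qq A + E,
\]
with $A$ ample, $E$ effective, and (by arranging the log resolution $h$ so that $\Supp(F)$ is absorbed into the SNC divisor) $\Supp(E) \cup \Supp(B_Y)$ simple normal crossing. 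Renaming $Y$ as $X'$ and $E$ as $E'$ then takes care of condition (1).

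Finally, for condition (2) I would observe the identity
\[
\mathbf{M}_{X'} \;=\; \tfrac{k-1}{k}\mathbf{M}_{X'} + \tfrac{1}{k}\mathbf{M}_{X'} \;\sim_\qq\; \Bigl( \tfrac{k-1}{k}\mathbf{M}_{X'} + \tfrac{1}{k}A \Bigr) + \tfrac{1}{k} E'
\]
and set $A'_k \coloneqq \tfrac{k-1}{k}\mathbf{M}_{X'} + \tfrac{1}{k}A$ for every integer $k \geq 1$. Since $\mathbf{M}_{X'}$ is nef and $A$ is ample, $A'_k$ is ample, hence in particular big and semi-ample, uniformly in $k$. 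The one subtle step, which I expect to be the principal obstacle, is precisely the passage from the semi-ample pullback $h^*A_0$ on the higher SNC model to a genuinely ample divisor with only a controlled effective perturbation; the anti-ample exceptional divisor trick handles this and is the only ingredient beyond Kodaira plus a convex combination.
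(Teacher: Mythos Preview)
Your proof is correct, but you are working harder than necessary because you have misread condition (2): the statement only asks that $A'_k$ be \emph{big and semi-ample}, not ample. The pullback $h^\ast A_0$ of an ample divisor along the birational morphism $h$ is automatically big (pullback of big along birational is big) and semi-ample (pullback of ample is free in some multiple). So your ``principal obstacle'' is not an obstacle at all, and the entire $-F$ perturbation step can be deleted.

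The paper's proof is accordingly much shorter. It invokes~\cite{KM98}*{Proposition~2.61} directly in the form: for $\bM X'.$ nef and big there is a single effective $E$ with $\bM X'. - E/k$ ample for all $k \gg 1$ (this already packages your convex-combination step). Then it replaces $X'$ by a log resolution of $(X',E+B')$ and simply pulls back both $A_k$ and $E$; the pulled-back $A'_k$ is big and semi-ample, and the pulled-back $E'$ sits in simple normal crossing position with $B'$. Your approach recovers genuine ampleness on the higher model, which is a stronger conclusion than required; the cost is the extra exceptional-divisor argument, which the paper avoids entirely.
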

\begin{proof}
As $\mathbf{M} \subs X'.$ is nef and big and it descends to the model $X'$ fixed before the lemma,  there exists and effective divisor $E$ such that $\mathbf{M} \subs X'. \sim_\qq A_k+E_k$, where $A_k$ is ample for $E_k  \coloneqq  \frac{E}{k}$, see~\cite{KM98}*{Proposition~2.61}.
As $E$ may not be simple normal crossing, it suffices to replace $X'$ with a log resolution of $(X, E)$ and define $A'_k$, $E'$ to be their pull-back on the new model.
\end{proof}

For $i=1,2$, we define $B_i'$ by the log pull-back formula $\K X'. + B_i'=\pi_i^\ast (\K X_i^m. + B_i^m)$.
As $E'$ is effective, we may find $0 < \epsilon \ll 1$ so that $\Supp( (B_i')\sups \geq 1.)= \Supp((B_i' + \epsilon E') \sups \geq 1.)$ for $i=1,2$.
Thus, for $k \gg 1$, we have $\Supp( (B_i')\sups \geq 1.)= \Supp((B_i' + E'_k) \sups \geq 1.)$ for $i=1,2$.
Let us fix one such value of $k$ and let $A'_k$ be a generic effective divisor in its $\qq$-linear equivalence class.
For $i =1, 2$, we define
\begin{align*}
\Gamma' \coloneqq A'_k + E'_k, \; 
\Gamma \coloneqq \pi _* \Gamma', \; 
\Gamma_i^m \coloneqq \pi_{{i},*} \Gamma'.
\end{align*}

By construction, the following two conditions hold:
\begin{enumerate}
    \item[(i)]
    $a_F(X,B+M) \geq a_F(X,B+\Gamma)$ for every divisorial valuation $F$ centered on $X$; and
    \item[(ii)] a prime divisor $G \subset X'$ satisfies $a_G(X_i^m,B_i^m+\Gamma_i^m) \leq 0$ if and only if $a_G(X^m_i,B^m_i) \leq 0$.
\end{enumerate}

Both properties follow from the fact that $\mathbf{M} \subs X'. \sim_\qq A'_k + E'_k$ and $\Supp( (B_i')\sups \geq 1.)= \Supp((B_i' + E'_k) \sups \geq 1.)$, while $A'_k$ does not contribute to any singularity by Bertini's theorem.

By (i), it follows that $X_i^m \rar X$ extracts only divisors with non-positive log discrepancy for $(X, B+\Gamma)$.
Therefore, $B_i^m+\Gamma_i^m$ is effective for $i=1,2$.
As $X'$ is a log resolution of all the divisors involved, we can use it as input variety to construct a dlt model by means of a suitable MMP, as in Theorem~\ref{generalized dlt model}.
By running a suitable relative MMP over $X_i^m$, we can obtain a $\qq$-factorial dlt model $(X_i'',B_i''+\Gamma_i'')$ of $(X_i^m,B_i^m+\Gamma_i^m)$.

\begin{lemma}
\label{lem:equiv.dual.complex}
With the same notations and assumptions as above,
 $\mathcal{D}((B_i^m)^{=1})$ is simple homotopy equivalent to $\mathcal{D}((B_i''+\Gamma_i'')^{=1})$ for $i=1,2$.
\end{lemma}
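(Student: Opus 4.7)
The plan is to identify $(X_i'', (B_i''+\Gamma_i'')^{=1})$ as a $\qq$-factorial dlt modification of the dlt pair $(X_i^m, B_i^m)$ obtained by extracting only divisorial log canonical places, and then to invoke a PL-invariance argument along the lines of~\cite{dFKX}*{Proposition~11}. The preliminary observation is that since $(X_i^m, B_i^m, \mathbf{M})/S$ is $\qq$-factorial generalized dlt, Remark~\ref{remark dual complex gpair to pair} yields that $(X_i^m, B_i^m)$ is itself dlt and that $\mathcal{D}((B_i^m)^{=1})$ coincides with its dual complex. Moreover, the choice of $k \gg 1$ and the genericity of $A'_k$ in Lemma~\ref{lem:big.nef.b-div} guarantee that every coefficient of $\Gamma'$, hence of its push-forwards $\Gamma_i^m$ and $\Gamma_i''$, is strictly less than $1$.

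I would then analyse $(B_i''+\Gamma_i'')^{=1}$ by splitting its components into two types: (a) strict transforms of components of $(B_i^m)^{=1}$, and (b) $\pi_i''$-exceptional divisors. Components of type (a) must appear with coefficient exactly $1$ in $B_i''+\Gamma_i''$, because along them the crepant pull-back of $K_{X_i^m}+B_i^m+\Gamma_i^m$ already contributes at least $1$ (the coefficient from $B_i^m$ is already $1$ and $\Gamma_i^m \geq 0$), while the dlt property of $(X_i'', B_i''+\Gamma_i'')$ caps the total at $1$. For a component $F$ of type (b), the construction of the dlt modification forces $a_F(X_i^m, B_i^m+\Gamma_i^m) \leq 0$; by property~(ii) stated just before the lemma this gives $a_F(X_i^m, B_i^m) \leq 0$, and log canonicity of $(X_i^m, B_i^m)$ then upgrades this to $a_F(X_i^m, B_i^m)=0$, so $F$ is a divisorial lc place of $(X_i^m, B_i^m)$ with center in a stratum of $(B_i^m)^{=1}$.

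Combining these observations, the pair $(X_i'', (B_i''+\Gamma_i'')^{=1})$ realizes a $\qq$-factorial dlt modification of $(X_i^m, B_i^m)$ obtained by extracting a (possibly proper) subset of its divisorial lc places. The weak-factorization argument of~\cite{dFKX}*{Proposition~11}---the same one invoked for the well-posedness of Definition~\ref{def dual complex}---applied to the two such modifications $(X_i^m, B_i^m)$ and $(X_i'', (B_i''+\Gamma_i'')^{=1})$ of the same underlying dlt pair yields a PL-homeomorphism $\mathcal{D}((B_i^m)^{=1}) \simeq \mathcal{D}((B_i''+\Gamma_i'')^{=1})$, which is in particular a simple homotopy equivalence. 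The main obstacle I foresee is the bookkeeping required in the analysis of type~(a): since $\Gamma'$ may share components with $B'$, after push-forward a component of $(B_i^m)^{=1}$ could receive a positive contribution from $\Gamma_i''$, and one must verify carefully that the combined coefficient in $B_i''+\Gamma_i''$ is exactly $1$ while no spurious coefficient-$1$ components arise from $\Gamma_i''$ alone; once this is settled, the identification with a dlt modification of $(X_i^m, B_i^m)$ is unambiguous and the cited PL-invariance finishes the argument.
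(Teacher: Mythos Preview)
Your approach is correct and matches what the paper does to establish this statement, with one organizational caveat: the paper places this argument in the proof of Lemma~\ref{well.posed.def} rather than in the proof block following Lemma~\ref{lem:equiv.dual.complex}. There the paper argues, exactly as you do, that the exceptional divisors of $X_i'' \to X_i^m$ all lie on $X'$ and by property~(ii) have $a_E(X_i^m,B_i^m) \leq 0$, hence (since $(X_i^m,B_i^m)$ is dlt) $a_E = 0$; thus $X_i''$ is a dlt model of the dlt pair $(X_i^m,B_i^m)$ and~\cite{dFKX}*{Proposition~11} gives a PL-homeomorphism, a fortiori a simple homotopy equivalence.

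The proof block the paper actually attaches to Lemma~\ref{lem:equiv.dual.complex} establishes the companion statement needed to close Lemma~\ref{well.posed.def}: since $\K X_i^m. + B_i^m + \Gamma_i^m \leq \rho_i^\ast(\K X. + B + \Gamma)$, each $(X_i'',B_i''+\Gamma_i'')$ is also a dlt model of the non-lc pair $(X,B+\Gamma)$, whence~\cite{Nak19}*{Proposition~2.14} gives $\mathcal{D}((B_1''+\Gamma_1'')^{=1}) \simeq \mathcal{D}((B_2''+\Gamma_2'')^{=1})$. That is the cross-comparison between $i=1$ and $i=2$; your argument supplies the comparison for each fixed $i$.

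Your bookkeeping worry for type~(a) is already handled by the setup: the choice of $k$ ensuring $\Supp((B_i')^{\geq 1}) = \Supp((B_i'+E'_k)^{\geq 1})$, together with the genericity of $A'_k$, is precisely what yields property~(ii), so no non-exceptional component with coefficient $<1$ in $B_i^m$ can acquire coefficient $\geq 1$ in $B_i^m + \Gamma_i^m$, and no spurious coefficient-$1$ components arise from $\Gamma_i''$ alone.
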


\begin{proof}
Since by construction
\begin{align*}
\K X_i^m.+B_i^m+\Gamma_i^m \leq \pi_i^\ast  (\K X. + B + \Gamma),
\end{align*}
$(X_i'',B_i''+\Gamma_i'')$ is also a dlt model for $(X,B+\Gamma)$.
Therefore, by~\cite{Nak19}*{Proposition~2.14}, $\mathcal{D}((B_1''+\Gamma_1'')^{=1})$ and $\mathcal{D}((B_2''+\Gamma_2'')^{=1})$ are simple homotopy equivalent to each other, as they compute the dual complex of $(X,B+\Gamma)$.
\end{proof}

We are now ready to prove the proof that Definition~\ref{def dual complex nonlc} depends neither on the choice of the ample divisor $H$, nor on the choice of the representatives $A'_k, E'_k$ that we made in the course of the construction contained in this section.

\begin{proof}[Proof Lemma~\ref{well.posed.def}]
By construction, the exceptional divisors of $X_i'' \rar X_i^m$ appear as divisors on $X'$.
Furthermore, as already observed, a divisor $E \subset X'$ satisfies $a_E(X_i^m,B_i^m+\Gamma_i^m) \leq 0$ if and only if $a_E(X^m_i,B^m_i) \leq 0$.
Therefore, $X_i'' \rar X_i^m$ only extracts divisors $E$ with $a_E(X^m_i,B^m_i) \leq 0$.
Hence, the variety $X''_i$ provides a dlt model for the dlt pair $(X^m_i,B^m_i)$.
Hence, $\mathcal{D}((B_i''+\Gamma_i'')^{=1})$ computes $\mathcal{DMR}(X_i^m,B_i^m)$.
As $(X^m_i,B^m_i)$ is dlt, by~\cite{dFKX}*{Proposition~11}, $\mathcal{DMR}(X_i^m,B_i^m)$ is a well defined PL-homeomorphism class.
Therefore, $\mathcal{D}((B_i^m)^{=1})$ is PL-homeomorphic to $\mathcal{D}((B_i''+\Gamma_i'')^{=1})$ for $i=1,2$.
Thus, $\mathcal{D}((B_i^m)^{=1})$ is simple homotopy equivalent to $\mathcal{D}((B_i''+\Gamma_i'')^{=1})$ for $i=1,2$.
\end{proof}

\section{Connectedness for birational maps} \label{sect.bir.case}
In this section, we recall some results explaining how the structure of the non-klt locus of generalized pairs changes under birational maps.
The first result is an adaptation to the generalized pair case of
\cite{Kol92}*{Theoreom 17.4}.
It provides a partial relative birational version of the connectedness principle for generalized pairs.

\begin{proposition}[{cf.~\cite{Bir16a}*{Lemma~2.14}}] 
\label{p.conn.gen.pairs}
Let 
$(X, B, \mathbf{M})/Z$ 
be a generalized sub-pair, where 
$h \colon X  \rar Z$ 
is a projective contraction of normal varieties.
Assume that
\begin{enumerate}
\item $h_\ast B^{<0}=0$; and
\item $-(K_X+B+M)$ is $h$-nef and $h$-big.
\end{enumerate}
Let $g \colon W  \rar X$ be a log resolution of $(X, B)$ to which $\mathbf{M}$ descends. Let 
\begin{align*}
K_W + \mathbf{M}_W=g^\ast (K_X+B+M) + \sum a_i E_i.
\end{align*}
Let $F\coloneqq -\sum_{a_i \leq -1} a_iE_i$.
Then $(g \circ h) \vert_{\Supp (F)} \colon \Supp (F) \to Z$ has connected fibers.
\end{proposition}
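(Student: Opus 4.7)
The plan is to follow the classical Kawamata--Viehweg argument of Koll\'ar, adapted to the generalized setting exactly as in \cite{Bir16a}*{Lemma~2.14}. First, I would introduce the trace $B_W \coloneqq -\sum a_i E_i$, so that the crepant identity reads
\[
K_W + B_W + \mathbf{M}_W = g^\ast(K_X+B+M),
\]
and note that $F = B_W^{\geq 1}$; consequently, $\Supp(F)$ coincides with the non-klt locus of the generalized sub-pair $(W, B_W, \mathbf{M})$. Writing $\{B_W\} \coloneqq B_W - \lfloor B_W \rfloor$, a direct computation yields
\[
-\lfloor B_W \rfloor - (K_W + \{B_W\}) = \mathbf{M}_W - g^\ast(K_X+B+M).
\]
The right-hand side is nef and big over $Z$: by hypothesis~(2), $-g^\ast(K_X+B+M)$ is $(h\circ g)$-nef and $(h\circ g)$-big, while $\mathbf{M}_W$ is nef over $Z$ by the choice of $W$ as a model where $\mathbf{M}$ descends, so their sum is again nef and big over $Z$. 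Since $(W, \{B_W\})$ is log smooth and klt, the relative Kawamata--Viehweg vanishing theorem gives
\[
R^i (h\circ g)_\ast \mathcal{O}_W(-\lfloor B_W \rfloor) = 0 \quad \text{for every } i > 0.
\]

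Next, I would decompose $-\lfloor B_W \rfloor = P - \lfloor F \rfloor$, where $P \coloneqq \lceil B_W^{<0} \rceil$ is an effective divisor supported on the components of $B_W^{<0}$. Those components are either $g$-exceptional or strict transforms of components of $B^{<0}$ on $X$; by hypothesis~(1), the latter are $h$-exceptional, so in either case $P$ is $(h\circ g)$-exceptional. Together with normality of $Z$, this gives $(h\circ g)_\ast \mathcal{O}_W(P) = \mathcal{O}_Z$. Twisting the ideal sheaf sequence of $\lfloor F \rfloor$ by $P$ produces
\[
0 \to \mathcal{O}_W(P - \lfloor F \rfloor) \to \mathcal{O}_W(P) \to \mathcal{O}_{\lfloor F \rfloor}(P) \to 0,
\]
and the vanishing just established yields a surjection of coherent sheaves on $Z$,
\[
\mathcal{O}_Z \twoheadrightarrow (h\circ g)_\ast \mathcal{O}_{\lfloor F \rfloor}(P).
\]

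I would conclude by a standard local argument. Suppose, for contradiction, that $\Supp(F)$ is disconnected in some neighborhood of the fiber $(h\circ g)^{-1}(z)$ for a point $z \in Z$. After passing to a Henselian or analytic neighborhood of $z$, we may write $\Supp(F) = F_1 \sqcup F_2$ with each $F_i$ meeting $(h\circ g)^{-1}(z)$. Then $\mathcal{O}_{\lfloor F \rfloor}(P)$ splits as a direct sum of two sheaves with nonzero stalks above $z$, so the stalk at $z$ of $(h\circ g)_\ast \mathcal{O}_{\lfloor F \rfloor}(P)$ becomes a nontrivial direct sum of two nonzero $\mathcal{O}_{Z,z}$-modules, contradicting its cyclicity as a quotient of the local ring $\mathcal{O}_{Z,z}$, which is indecomposable. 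The only mildly delicate step is the invocation of Kawamata--Viehweg vanishing in the generalized setting; however, since the nef contribution of $\mathbf{M}_W$ merely reinforces the nef-and-big twist, the classical relative vanishing statement applies verbatim, and no additional input beyond what is already available for ordinary klt pairs is required.
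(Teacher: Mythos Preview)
Your proof is correct and essentially identical to the paper's: your $P=\lceil B_W^{<0}\rceil$ is exactly the paper's $\lceil A\rceil$, your $\{B_W\}$ is the paper's $\{-A\}+\{F\}$, and the Kawamata--Viehweg vanishing, the short exact sequence, and the local indecomposability argument match line by line. The only difference is packaging---you work with the sub-boundary $B_W$ directly, while the paper splits the discrepancy divisor into $A$ and $F$ from the outset.
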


\begin{remark}
The support of $F$ is exactly the non-klt locus of the generalized sub-pair $(W, -\sum a_iE_i, \mathbf{M})/Z$.
\end{remark}

\begin{remark}
When $h$ is birational, then it suffices to require that $-(K_X+B+M)$ is $h$-nef.
\end{remark}

\begin{proof}
Let $A\coloneqq \sum_{a_i > -1} a_iE_i$ and let $s\coloneqq g\circ h$.
Then,
\begin{align*}
\lceil A \rceil - \lfloor F \rfloor= K_W+ \mathbf{M}_W -g^\ast (K_X+B+M) + \{-A \} + \{F\}
\end{align*}
and, by relative Kawamata--Viehweg vanishing,
\begin{align*}
R^1s_* \strutt_Z(\lceil A \rceil - \lfloor F \rfloor) =0,
\end{align*}
since $\mathbf{M}_W -g^{*}(K_X+B+M)$ is $s$-big and $s$-nef and 
$\{-A \} + \{F\}$ is simple normal crossing.
\newline
Considering the following exact sequence
\begin{align*}
0 \to \strutt_X(\AF) \to \strutt_X(\lceil A \rceil) \to \strutt_{\lfloor F \rfloor}(\lceil A \rceil) \to 0
\end{align*} 
and applying $s_*$, we get that 
\begin{align*}
s_* \strutt_X(\lceil A \rceil) \to  s_* \strutt_{\lfloor F \rfloor}(\lceil A \rceil)
\end{align*}
is surjective. As $\lceil A \rceil$ is $s$-exceptional, then $s_* \strutt_X(\lceil A \rceil) 
\simeq \strutt_Z$.
Hence, $s_* \strutt_X(\lceil A \rceil)$ is locally principally generated in a neighborhood fo $z$. 
\newline
Assume by contradiction that $\Supp (F)$ is not connected over some point $z \in Z$. Then, $\strutt_{\lfloor F \rfloor}(\lceil A \rceil)$ is not locally principally generated over $z$, since it contains the push-forward 
of the sections vanishing on any but one of the components of the $\lfloor F \rfloor$. On the other hand, the locally principally generated sheaf $s_* \strutt_X(\lceil A \rceil) 
\simeq \strutt_Z$ surjects onto $\strutt_{\lfloor F \rfloor}(\lceil A \rceil)$. This leads to a contradiction.
\end{proof}

\begin{remark}
\label{r.num.comp}
By applying Proposition~\ref{p.conn.gen.pairs} in the case where $Z=X$ and $h$ is the identity map, we obtain that the number of connected components $\Nklt(X, B , \mathbf  M)$ is unchanged by passing to a log resolution where $\mathbf M$ descends.
\end{remark}

\begin{lemma}
\label{conn.div.contr.lemma}
Let $(X, B, \mathbf{M})/S$ be a $\mathbb Q$-factorial
generalized pair and let 
$f \colon X  \rar Y$ 
be a proper morphism of algebraic varieties over $S$. 
Let  
$\pi \colon X \to X_1$
be a divisorial contraction over $Y$, that is, we have the following commutative diagram
\begin{align*}
\xymatrix{
X \ar[dr]_f  \ar[rr]^\pi & &  X_1 \ar[dl]^{f_1}\\
& Y.
}
\end{align*}
Assume that 
\begin{itemize}
\item 
$\rho(X/X_1)=1$;

\item 
$K_X+B+M \sim_{\mathbb{R}, f} 0$;

\item 
$B \sups \geq 1.$ is $\pi$-ample; and 
    
\item 
$\Nklt(X, B , \mathbf M) \subset \Supp (B \sups \geq 1.)$.
\end{itemize}
Then $\Nklt(X_1, B_1 , \mathbf M) \subset \Supp(B_1 \sups \geq 1.)$, where $B_1  \coloneqq  \pi_\ast B$.
\newline
Moreover, for any point $y \in Y$, the number of connected components of $\Nklt(X_{1}, B_1, \mathbf M) \cap f_1^{-1}(y)$ 
is the same as the number of connected components of $\Nklt(X, B, \mathbf M) \cap f^{-1}(y)$.
\end{lemma}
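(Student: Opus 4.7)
The strategy is to exploit the fact that the hypotheses force $\pi$ to be crepant for the generalized pair, so that the non-klt locus on $X_1$ is exactly the $\pi$-image of the non-klt locus on $X$. Both conclusions then follow by combining this identification with the $\pi$-ampleness of $B^{\geq 1}$ and the connectedness principle of Proposition~\ref{p.conn.gen.pairs}.

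The first step is to establish crepancy. Let $E$ denote the unique prime $\pi$-exceptional divisor (it is unique since $\rho(X/Y)=1$ and $\pi$ is divisorial); in particular $-E$ is $\pi$-ample and $E \cdot C < 0$ for every $\pi$-contracted curve $C$. Since $K_X + B + M \sim_{\mathbb{R},f} 0$ and $f = f_1 \circ \pi$, the divisor $K_X + B + M$ is $\pi$-numerically trivial; by the negativity lemma (using $\rho(X/X_1)=1$), there exists an $\mathbb{R}$-Cartier divisor $L$ on $X_1$ with $K_X + B + M = \pi^\ast L$. Setting $B_1 \coloneqq \pi_\ast B$ and recalling that $\mathbf{M}_{X_1} = \pi_\ast \mathbf{M}_X$ by the definition of b-divisors, pushing forward gives $L = K_{X_1} + B_1 + \mathbf{M}_{X_1}$. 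Hence $\pi$ is crepant for the generalized pair, and the non-klt places of $(X, B, \mathbf{M})$ and $(X_1, B_1, \mathbf{M})$ coincide, so that $\Nklt(X_1, B_1, \mathbf{M}) = \pi(\Nklt(X, B, \mathbf{M}))$.

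For the inclusion $\Nklt(X_1, B_1, \mathbf{M}) \subset \Supp(B_1^{\geq 1})$, the images of the non-$\pi$-exceptional components of $B^{\geq 1}$ appear in $B_1^{\geq 1}$ with unchanged coefficients, so it suffices to show $\pi(E) \subset \Supp(B_1^{\geq 1})$ whenever $E \subset \Supp(B^{\geq 1})$. For any $p \in \pi(E)$ and any curve $C \subset \pi^{-1}(p) \subset E$, the $\pi$-ampleness of $B^{\geq 1}$ gives $B^{\geq 1} \cdot C > 0$, while the coefficient of $E$ in $B^{\geq 1}$ contributes non-positively because $E \cdot C < 0$; hence some non-$\pi$-exceptional component $D_j$ of $B^{\geq 1}$ satisfies $D_j \cdot C > 0$, forcing $p \in \pi(D_j) \subset \Supp(B_1^{\geq 1})$.

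For the second conclusion, I would apply the birational case of Proposition~\ref{p.conn.gen.pairs} to $\pi \colon X \to X_1$: since $-(K_X+B+M) \sim_{\mathbb{R},\pi} 0$ is $\pi$-nef and $\pi$ is birational, the non-klt locus of $(X, B, \mathbf{M})$ is connected in a neighborhood of every fiber of $\pi$. Thus distinct connected components of $\Nklt(X, B, \mathbf{M})$ have disjoint $\pi$-images; moreover, since $\pi$ is proper with connected fibers, each such image is itself connected. Combined with the identification $\Nklt(X_1, B_1, \mathbf{M}) = \pi(\Nklt(X, B, \mathbf{M}))$, this yields a bijection between connected components of the two non-klt loci. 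Restricting to a neighborhood of $\pi^{-1}(f_1^{-1}(y)) = f^{-1}(y)$ then gives the required equality of component counts. The main technical subtlety I anticipate is keeping the crepancy argument clean --- in particular verifying the $\mathbb{R}$-Cartierness of $K_{X_1}+B_1+\mathbf{M}_{X_1}$ and the identification $\pi_\ast \mathbf{M}_X = \mathbf{M}_{X_1}$ --- after which both conclusions follow essentially by book-keeping.
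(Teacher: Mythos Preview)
Your proposal is correct and follows essentially the same approach as the paper: both use crepancy of $\pi$ to identify $\Nklt(X_1,B_1,\mathbf M)=\pi(\Nklt(X,B,\mathbf M))$, then use $E\cdot R<0$ together with $\pi$-ampleness of $B^{\geq 1}$ to find a non-exceptional component of $B^{\geq 1}$ whose image contains $\pi(E)$. The only packaging difference is in the final step: the paper takes a common log resolution $W$ where $\mathbf M$ descends and applies Proposition~\ref{p.conn.gen.pairs} to each of $W\to X$ and $W\to X_1$ separately (with $h=\mathrm{id}$), thereby putting the connected components of both non-klt loci in bijection with those of $\Supp(F)$ on $W$; you instead invoke Proposition~\ref{p.conn.gen.pairs} with $h=\pi$ and then push the conclusion down from $W$ to $X$. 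Both routes work, but note that the proposition as stated concludes connectedness of $\Supp(F)$ on the resolution $W$, not directly of $\Nklt(X,B,\mathbf M)$ --- so your phrasing implicitly uses the image-under-$g$ step, which the common-resolution formulation avoids.
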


\begin{proof}
Let $E$ be the divisor contracted by $\pi$.
As $K_X+B+M \sim_{\mathbb{R}, f} 0$, $\Nklt(X_1, B_1 , \mathbf M) = \pi(\Nklt(X, B , \mathbf  M))$. If $E \not \subset \Nklt(X, B, \mathbf M)$ then there is nothing to prove. 
Instead, if $E \subset \Nklt(X, B, \mathbf M)$, then $\mu_E (B) \geq 1$.
As $E$ is exceptional for $\pi$, then $E \cdot R < 0$, where $R$ is the extremal ray of $\overline{NE}(X/Y)$ corresponding to the contraction $\pi$. 
As $B^{\geq 1}$ is $\pi$-ample, there exists a component $G$ included in the support of $B^{\geq 1}$ such that $G \cdot R >0$.
In particular, $\pi_\ast G \supset \pi(E)$, which proves the claim, as $B_1^{\geq 1} = \pi_\ast (B^{\geq 1})$.
\newline
To prove the last assertion, it suffices to notice that, since we have
$\Nklt(X_1, B_1 , \mathbf M) = 
\pi(\Nklt(X, B , \mathbf  M))$, 
the number of connected components of 
$\Nklt(X_1, B_1 , \mathbf M) \cap f_1^{-1}(y)$ 
cannot be strictly larger than the number of those of 
$\Nklt(X, B , \mathbf M) \cap f^{-1}(y)$.
If the number of components were to actually decrease, then taking a common resolution 
\begin{displaymath}
\xymatrix{
& W \ar[dr]^p \ar[dl]_q& \\
X  \ar[rr]^\pi & & X_1,
}
\end{displaymath}
where $\mathbf{M}$ descends and applying Proposition~\ref{p.conn.gen.pairs} with $h=\pi$, $Z=X_1$, $g=q$, $p=g \circ h$ we would obtain a contradiction, since that situation would imply a lack of connectedness of the fibers of $p\vert_{\Supp F} \colon \Supp F \to X_1$, where $F$ is the divisor defined in the statement of Proposition~\ref{p.conn.gen.pairs}.
\end{proof}

\begin{lemma}
\label{flop.nklt.lem}
Let $(X, B, \mathbf{M})/S$ be a $\mathbb Q$-factorial
generalized pair and let $f \colon X  \rar Y$ be a proper morphism of algebraic varieties over $S$. 
Let $\pi \colon X \dashrightarrow X^+$ be a $(K_X+B+M)$-flop over $Y$, that is, we have the following commutative diagram
\begin{displaymath}
\xymatrix{X \ar@{-->}[rr]^{\pi}  \ar[rd]^{l}  \ar[rdd]_f& &\ar[ld]_{l^+} \ar[ldd]^{f^+} X^+\\
& Z \ar[d]^{f_Z} .& \\
&Y&}
\end{displaymath}
Assume that 
\begin{itemize}
    \item $B \sups \geq 1.$ is $g$-ample; and
    \item $\Nklt(X, B , \mathbf M) \subset \Supp(B \sups \geq 1.)$.
\end{itemize}
Then $\Nklt(X^+, B^+ , \mathbf M) \subset \Supp((B^{+})^{ \geq 1})$, where $B^+$ is the strict transform of $B$.
\newline
Moreover, for any point $y \in Y$, the number of connected components of 
$\Nklt(X^+, B^+, \mathbf M) \cap (f^{+})^{-1}(y)$ 
is the same as the number of connected components of 
$\Nklt(X, B, \mathbf M) \cap f^{-1}(y)$.
\end{lemma}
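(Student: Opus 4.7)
My plan is to adapt the proof of Lemma~\ref{conn.div.contr.lemma} to the flop case, exploiting the fact that a flop is crepant. I would begin by taking a common log resolution
\[
\xymatrix{ & W \ar[dl]_p \ar[dr]^q & \\ X \ar@{-->}[rr] & & X^+ }
\]
on which $\mathbf{M}$ descends. Since $g$ and $g^+$ are both small and both $K_X+B+M$ and $K_{X^+}+B^++M$ pull back from the same $\mathbb{Q}$-divisor on $Z$, we have $p^\ast(K_X+B+M) = q^\ast(K_{X^+}+B^++M)$, and hence the fundamental identity $a_E(X,B,\mathbf{M})=a_E(X^+,B^+,\mathbf{M})$ holds for every divisorial valuation $E$ over $X$.

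To prove the inclusion $\Nklt(X^+, B^+, \mathbf{M}) \subset \Supp((B^+)^{\geq 1})$, I would fix $y^+ \in \Nklt(X^+, B^+, \mathbf{M})$ and a valuation $E$ with $a_E(X^+,B^+,\mathbf{M}) \leq 0$ and $y^+ \in c_{X^+}(E)$. If $E$ is a prime divisor on $X^+$, then its coefficient in $B^+$ is at least one and the claim is immediate; otherwise, since $X \dashrightarrow X^+$ is an isomorphism in codimension one, $E$ is exceptional on both models, and I split into two subcases. If $y^+ \in \mathrm{Exc}(g^+)$, I will show that $\mathrm{Exc}(g^+) \subset \Supp((B^+)^{\geq 1})$ as follows: by the sign-flip characteristic of a flop, $g$-ampleness of $B^{\geq 1}$ yields that $(B^+)^{\geq 1}$ is $g^+$-antiample, so for every $g^+$-contracted curve $C$ one has $(B^+)^{\geq 1}\cdot C < 0$, which forces $C \subset \Supp((B^+)^{\geq 1})$ since $(B^+)^{\geq 1}$ is effective. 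If instead $y^+ \notin \mathrm{Exc}(g^+)$, Zariski's Main Theorem applied to the small proper birational morphism $g^+$ gives that $(g^+)^{-1}(g^+(y^+))$ is connected; since it contains $y^+$, an isolated point of this fiber in the open set $X^+\setminus \mathrm{Exc}(g^+)$, while $\mathrm{Exc}(g^+)$ is closed, it must coincide with $\{y^+\}$. Hence $g^+(y^+)$ lies outside the flopping locus in $Z$, and the flop restricts to an isomorphism on a neighborhood of $(y,y^+)$, where $y$ is the unique preimage of $g^+(y^+)$ in $X$. Under this local isomorphism $B$ corresponds to $B^+$, and since $y \in c_X(E) \subset \Nklt(X,B,\mathbf{M}) \subset \Supp(B^{\geq 1})$ by hypothesis, we conclude $y^+ \in \Supp((B^+)^{\geq 1})$.

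For the assertion on connected components, I would apply Proposition~\ref{p.conn.gen.pairs} and Remark~\ref{r.num.comp} to both birational morphisms $p$ and $q$ (refining $W$ if necessary so that it is simultaneously a log resolution of $(X,B)$ and $(X^+,B^+)$). Writing $h := g \circ p = g^+ \circ q \colon W \to Z$ and denoting by $Z_y \subset Z$ the image of $f^{-1}(y)$, the preimages $p^{-1}(f^{-1}(y))$ and $q^{-1}((f^+)^{-1}(y))$ both coincide with $h^{-1}(Z_y)$, yielding the chain of equalities
\[
\#\pi_0\bigl(\Nklt(X,B,\mathbf{M}) \text{ near } f^{-1}(y)\bigr) = \#\pi_0\bigl(\Nklt(W,B_W,\mathbf{M}) \text{ near } h^{-1}(Z_y)\bigr) = \#\pi_0\bigl(\Nklt(X^+,B^+,\mathbf{M}) \text{ near } (f^+)^{-1}(y)\bigr),
\]
where $B_W$ is defined by the crepant pull-back formula.

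The main technical point to verify carefully is the sign-flip argument used in the first subcase, namely proving that $(B^+)^{\geq 1}$ is strictly $g^+$-antiample. In the standard situation $\rho(X/Z)=1$ this is automatic from the flop description, but in the general flop setting one must check strict negativity against every extremal curve of $g^+/Z$, which follows because the flop reverses the signs of intersections of strict transforms with extremal classes and $B^{\geq 1}$ is strictly positive against every $g$-contracted curve.
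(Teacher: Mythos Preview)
Your proof is correct and follows essentially the same approach as the paper. Both arguments hinge on the sign-flip property: $B^{\geq 1}$ being $g$-ample forces $-(B^+)^{\geq 1}$ to be $g^+$-ample, whence $\mathrm{Exc}(g^+)\subset\Supp((B^+)^{\geq 1})$; and both conclude the connected-components statement via a common resolution and Proposition~\ref{p.conn.gen.pairs}.

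The paper's treatment of the inclusion $\Nklt(X^+,B^+,\mathbf M)\subset\Supp((B^+)^{\geq 1})$ is slightly more streamlined than your case analysis: it pushes everything down to $Z$, using the crepant identities $g(\Nklt(X,B,\mathbf M))=g^+(\Nklt(X^+,B^+,\mathbf M))$ and $g_*B^{\geq 1}=g^+_*(B^+)^{\geq 1}$, and then combines these with $\mathrm{Exc}(g^+)\subset\Supp((B^+)^{\geq 1})$ in one line. Your subcase 2b reaches the same conclusion but the step ``$y\in c_X(E)$'' is not literally justified; what you actually need (and what holds) is that the non-klt locus is Zariski-local, so the local isomorphism identifies $\Nklt(X,B,\mathbf M)$ with $\Nklt(X^+,B^+,\mathbf M)$ over the complement of the flopping locus. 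Finally, your closing concern about $\rho(X/Z)>1$ is unnecessary: in the paper's setting a flopping contraction is an extremal small contraction, so $\rho(X/Z)=1$ and the sign-flip is automatic.
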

\begin{proof}
As $l$ is a flopping contraction, 
then
\begin{align*}
K_X+B+M \sim_{\mathbb{R}, l} 0, \text{ and }  K_{X^+}+B^+ +M^+ \sim_{\mathbb{R}, l^+} 0,
\end{align*}
where $M^+ \coloneqq \mathbf M \subs X^+.$.
This in turn implies that 
\begin{align*}
\Nklt(Z, B_Z+M_Z) = 
l(\Nklt(X, B, \mathbf M))=
l^+(\Nklt(X^+, B^+ , \mathbf M)),
\end{align*}
and $l_\ast (B^{\geq 1})=l^+_\ast (B^{+ \geq 1})$.
As $B \sups \geq 1.$ is $l$-ample, it follows that $-(B^{+})^{ \geq 1}$ is $l^+$-ample.
Since 
$-(B^{+})^{ \geq 1}$ 
is anti-effective, it follows that the 
$l^+$-exceptional locus is contained in 
$\Supp((B^{+})^{ \geq 1})$.
Thus, $\Nklt(X^+, B^+ , \mathbf M) \subset \Supp((B^{+})^{ \geq 1})$.
\newline
To prove the last assertion, it suffices to notice that, since $\pi$ is a flop over $Y$ and $l, l^+$  are the associated flopping contractions, then 
$l(\Nklt(X, B , \mathbf M)) = 
l^+(\Nklt(X^+, B^+ , \mathbf  M))$.
Taking a resolution of indeterminacies of $\pi$
\begin{displaymath}
\xymatrix{
& W \ar[dr]^p \ar[dl]_q& \\
X  \ar@{-->}[rr]^\pi & & X_1
}
\end{displaymath}
where $\mathbf{M}$ descends and applying Proposition~\ref{p.conn.gen.pairs} twice, first to $h=l$, $g=q$, and then to $h=l^+$, $g=p$, we see that the number of connected components of 
$\Nklt(X^+, B^+, \mathbf M) \cap (f^{+})^{-1}(y)$ (resp. $\Nklt(X, B, \mathbf M) \cap f^{-1}(y)$)
is the same as the number of connected components of 
$\Nklt(Z, B_Z, \mathbf M) \cap (f_Z)^{-1}(y)$, 
where 
$B_Z \coloneqq l_\ast B= l_\ast^+ B^+$, as otherwise that situation would imply a lack of connectedness of the fibers of $(l^+ \circ p)\vert_{\Supp F} \colon \Supp F \to Z$ 
(resp. $(l \circ p)\vert_{\Supp F} \colon \Supp F \to Z$), 
where $F$ is the divisor defined in the statement of Proposition~\ref{p.conn.gen.pairs}.
\end{proof}

\begin{remark} \label{remark nklt in gdlt model}
Let $(X, B, \mathbf{M})$ be a generalized pair.
Let $(X^m,B^m, \mathbf{M})$ be a generalized pair together with a dlt modification $f^m \colon X^m  \rar X$ of $(X,B+M)$, as in Theorem~\ref{generalized dlt model}. 
Let us denote by $B'$ the unique boundary supported on $\Supp(B^m)$ and defined by the identity
\begin{align*}
\K X^m. + B' + M^m = f \sups m,*.(\K X. + B + M), \; M^m  \coloneqq  \mathbf{M}_{X^m}.
\end{align*}
Then, every non-klt center of $(X^m,B'+M^m)$ is contained in $\Supp((B') \sups \geq 1.)$. 
Assume not, and fix a non-klt center $W \subset X^m$ not contained in $\Supp((B') \sups \geq 1.)$. 
Then, $B'=B^m= (B^m)\sups <1.$ near the generic point of $W$. Since $(X,B^m+M^m)$ is   generalized dlt, $(X,(B^m)\sups <1.+M^m)$ is generalized klt. 
Thus, $(X^m,B' + M^m)$ is also klt at the generic point of $W$, which gives a contradiction.
\end{remark}

\section{Generalized log Calabi--Yau pairs and their structure}
\label{dual.struct.sect}

In this section, we collect some technical results related to the structure of generalized log Calabi--Yau pairs which will be useful in the analysis of dual complexes for such class of pairs.

\subsection{A Kawamata--Viehweg type result for generalized klt pairs}

In this subsection, we prove the following vanishing result of Kawamata--Viehweg type that will be used to show contractibility of certain types of dual complex for generalized log Calabi--Yau pairs.

\begin{theorem} \label{kvv}
Let $(X,B, \mathbf{M})$ be a generalized pair with generalized klt singularities. Let $L$ be a Cartier divisor on $X$ such that $H \coloneqq L - (\K X. + B + M)$ is nef and big. Then, $H^i(X,L)=0$ for $i > 0$.
\end{theorem}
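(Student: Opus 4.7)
The strategy is to lift the problem to a log resolution where all the data is SNC and $\mathbf{M}$ is represented by an honest nef divisor, then apply ordinary Kawamata--Viehweg vanishing upstairs and push down.

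First I would pick a log resolution $\pi \colon X' \to X$ of $(X,B)$ on which $\mathbf{M}$ descends, i.e. $\mathbf{M} = \overline{\mathbf{M}_{X'}}$ with $\mathbf{M}_{X'}$ nef, and write the crepant pull-back
\[
K_{X'} + B' + \mathbf{M}_{X'} = \pi^\ast(K_X + B + M).
\]
Since $(X,B,\mathbf{M})$ is generalized klt, every coefficient of $B'$ is strictly less than $1$, and moreover $B'$ has simple normal crossing support. Define the Cartier divisor
\[
N \coloneqq \pi^\ast L + \lceil -B' \rceil.
\]
A direct computation gives
\[
N - K_{X'} - \mathbf{M}_{X'} = \pi^\ast H + \{B'\},
\]
so that $N - K_{X'} \equiv \bigl(\pi^\ast H + \mathbf{M}_{X'}\bigr) + \{B'\}$, where $\pi^\ast H + \mathbf{M}_{X'}$ is nef (both summands are nef) and big (because $\pi^\ast H$ already is), while $\{B'\}$ has SNC support and coefficients in $[0,1)$. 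Kawamata--Viehweg vanishing in the usual klt form then yields $H^i(X', N) = 0$ for all $i > 0$.

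Next I would push $N$ down to $X$. Because $(X,B,\mathbf{M})$ is generalized klt, every prime divisor in $\lceil -B' \rceil$ with positive coefficient must be $\pi$-exceptional: on strict transforms of components of $B$ the coefficients of $B'$ lie in $[0,1)$, and on an exceptional prime $E$ with discrepancy $a_E > 0$ one has coefficient $-a_E$ in $B'$, giving $\lceil a_E \rceil \geq 1$ in $\lceil -B' \rceil$. Thus $\lceil -B' \rceil$ is an effective $\pi$-exceptional divisor, and the projection formula with $\pi_\ast \mathcal{O}_{X'}(\lceil -B' \rceil) = \mathcal{O}_X$ gives
\[
\pi_\ast \mathcal{O}_{X'}(N) = \mathcal{O}_X(L).
\]
A relative Kawamata--Viehweg vanishing applied to the same decomposition $N - K_{X'} = \pi^\ast H + \mathbf{M}_{X'} + \{B'\}$ (noting that $\pi^\ast H + \mathbf{M}_{X'}$ is $\pi$-nef and $\pi$-big because $\pi^\ast H$ is $\pi$-trivial and $\mathbf{M}_{X'}$ is nef absolutely, plus a small ample perturbation from $H$ to secure $\pi$-bigness) gives $R^i \pi_\ast \mathcal{O}_{X'}(N) = 0$ for $i > 0$. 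The Leray spectral sequence then collapses and yields $H^i(X,L) = H^i(X', N) = 0$ for $i > 0$.

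The only mildly delicate point is ensuring that $\pi^\ast H + \mathbf{M}_{X'}$ genuinely fits the hypotheses of both the absolute and the relative Kawamata--Viehweg theorems: absolutely it is nef and big because $\pi^\ast H$ is, so no trouble; relatively over $\pi$ the divisor $\pi^\ast H$ is trivial and $\mathbf{M}_{X'}$ is only nef absolutely, but since $H$ is nef and big, one can perturb $L$ by a small rational multiple of an ample divisor pulled back from $X$ (absorbed into $H$) to obtain strict $\pi$-ampleness of the nef+big part without changing the effective SNC contribution, cf.\ the argument in Proposition~\ref{p.conn.gen.pairs}. Everything else is bookkeeping.
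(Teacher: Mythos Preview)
Your argument is essentially the same as the paper's: pass to a log resolution where $\mathbf{M}$ descends, consider $N = \pi^\ast L + \lceil -B' \rceil$ (which is exactly the paper's $\pi^\ast L + \lceil F' \rceil$), apply absolute Kawamata--Viehweg to kill $H^i(X',N)$, observe that $\lceil -B' \rceil$ is effective $\pi$-exceptional so $\pi_\ast \mathcal O_{X'}(N)=\mathcal O_X(L)$, and then use relative vanishing plus Leray to conclude.

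There is one genuine slip in your final paragraph. Perturbing by the pull-back of an ample divisor from $X$ cannot produce $\pi$-ampleness: anything of the form $\pi^\ast A$ is $\pi$-trivial, so adding it to $\mathbf M_{X'}+\pi^\ast H$ leaves you exactly where you started relatively over $X$. The paper avoids this by simply citing a relative vanishing theorem (\cite{HK10}*{Theorem~3.45}) that applies with only $\pi$-nefness for a birational $\pi$. If you want to justify this directly, the standard device is different from what you wrote: choose the log resolution so that there is an effective $\pi$-exceptional divisor $C'$ with $-C'$ $\pi$-ample, and write
\[
N - K_{X'} = (\mathbf M_{X'} + \pi^\ast H - \epsilon C') + (\{B'\} + \epsilon C').
\]
For $0<\epsilon\ll 1$ the first bracket is $\pi$-ample and the second still has simple normal crossing support with coefficients in $[0,1)$, so relative Kawamata--Viehweg applies in its usual form. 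With this correction your proof is complete and matches the paper's.
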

\begin{proof}
Let $\pi \colon X' \rar X$ be a log resolution for $(X,B)$ where $\mathbf{M}$ descends.
Thus, we may write
\begin{align*}
\K X'. + \pi_* \sups -1. (B) + E' - F' + M' = \pi^\ast (\K X. + B + M), \; M' \coloneqq  \mathbf{M}_{X'},
\end{align*}
where $E' \geq 0$, $F' \geq 0$, $E' \wedge F' = 0$, $E'-F'$ is $\pi$-exceptional, and $\pi_* \sups -1. (B) + E' - F'$ has simple normal crossing support.
Thus,
\begin{align*}
\pi^\ast L + \lceil F' \rceil = \K X'. + \pi_* \sups -1. (B) + E' + (\lceil F' \rceil - F') + M' + \pi^\ast H.
\end{align*}
As $M' + \pi^\ast H$ is nef and big, by Kawamata--Viehweg vanishing~\cite{KM98}*{Theorem~2.64}, we have $H^i(X',\pi^\ast L + \lceil F' \rceil)=0$ for $i>0$.
Similarly, by~\cite{HK10}*{Theorem~3.45}, we have $R^i\pi_* \O X'.(\pi^\ast L + \lceil F' \rceil)=0$ for $i>0$.
Since $\lceil F' \rceil$ is $\pi$-exceptional, the projection formula implies $\pi_* \O X'.(\pi^\ast L + \lceil F' \rceil)=\O X. (L)$.
Hence, we conclude that $H^i(X,L)=0$ for $i>0$.
\end{proof}

\subsection{Special birational models}
In this subsection, we show how to reduce the problem of studying the dual complex of a generalized log Calabi--Yau pair to the log Fano case.
The following result is a generalization of~\cite{KX16}*{Theorem~49}.

\begin{theorem} 
\label{main technical thm for complexes}
Let $(X,B,\mathbf{M})$ be $\qq$-factorial generalized dlt pair.
Assume that $\K X. + B + M \sim_\qq 0$. 
Then, there exist a crepant birational map $\phi \colon X \drar \overline{X}$, a generalized pair $(\overline{X}, \overline{B}, \mathbf M)$, and a morphism $q \colon \overline{X} \rar Z$ such that:
\begin{itemize}
    \item[(1)] $\overline{B}\sups =1.$ fully supports a $q$-ample divisor;
    \item[(2)] every generalized log canonical center of $(\overline{X},\overline{B},\mathbf{M})$ dominates $Z$;
    \item[(3)] $\overline{E} \subset \overline{B} \sups =1.$ for every $\phi \sups -1.$-exceptional divisor $\overline{E} \subset \overline{X}$; and
    \item[(4)] $\phi \sups -1.$ is an isomorphism over $\overline{X} \setminus \overline{B} \sups =1.$.
\end{itemize}
\end{theorem}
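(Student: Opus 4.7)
The strategy is to run a minimal model program (MMP) on a mildly perturbed boundary to produce a Mori fiber space on which $\overline B^{=1}$ becomes relatively ample, and then, if needed, iterate using the generalized canonical bundle formula to shrink the base so that every generalized lc center dominates.

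If $B^{=1}=0$, we take $\overline X=X$, $\overline B=B$, and let $Z$ be a point with $q\colon X\to Z$ the constant map; all four conclusions then hold vacuously. So assume $B^{=1}\neq 0$. For a small rational $0<\epsilon<1$, the generalized pair $(X,B-\epsilon B^{=1},\mathbf M)$ is generalized klt, and
\[
K_X+(B-\epsilon B^{=1})+\mathbf M_X\sim_{\mathbb Q}-\epsilon B^{=1},
\]
which is not pseudo-effective because $B^{=1}$ is effective and nonzero. By the BCHM-type termination result available for generalized klt pairs, one can run a $(K_X+B-\epsilon B^{=1}+\mathbf M_X)$-MMP that terminates with a Mori fiber space $q\colon \overline X\to Z$. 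Set $\phi\colon X\dashrightarrow \overline X$ and $\overline B:=\phi_* B$; the resulting pair $(\overline X,\overline B,\mathbf M)$ is generalized log canonical and crepant to $(X,B,\mathbf M)$.

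Conditions (1), (3), and (4) follow from standard properties of the MMP. For (1), the MFS contraction is $(-\epsilon\overline B^{=1})$-negative, so $\overline B^{=1}$ is $q$-ample and in particular fully supports a $q$-ample divisor. For (3), none of the elementary MMP steps (divisorial contractions, flips, or the final MFS) extracts a prime divisor, so $\phi^{-1}$ has no exceptional divisors and (3) is vacuous. For (4), each extremal ray $R$ contracted by the MMP is $B^{=1}$-positive, so the exceptional locus of the corresponding step is covered by curves meeting $B^{=1}$; moreover $B^{=1}$ restricts to a relatively ample divisor on a divisorial exceptional locus, so its image dominates the image of that exceptional locus. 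Inductively on the MMP steps, the indeterminacy locus of $\phi^{-1}$ is contained in $\overline B^{=1}$, yielding (4).

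The main obstacle is condition (2): a generalized lc center $W$ of $(\overline X,\overline B,\mathbf M)$ may be contained in a fiber of $q$ and thus fail to dominate $Z$. I would resolve this by induction on $\dim X$. Applying the generalized canonical bundle formula (Theorem~\ref{generalized canonical bundle formula}) to $q$ produces a generalized log Calabi--Yau pair $(Z,B_Z,\mathbf N)$ of strictly smaller dimension in which each such $q(W)$ appears as a non-klt center. By the inductive hypothesis applied to $(Z,B_Z,\mathbf N)$, one obtains a factorization $Z\dashrightarrow \overline Z\xrightarrow{q'} Z'$ satisfying the four conclusions for the base pair. Using Theorem~\ref{generalized dlt model} to pass to auxiliary dlt models and Lemmas~\ref{lemma patch} and~\ref{lemma supporto} to transfer the relative ample condition through these modifications, one lifts this structure to produce a morphism $\overline X\to Z'$ (possibly after replacing $\overline X$ by a further crepant model) for which all of (1)--(4) hold simultaneously.
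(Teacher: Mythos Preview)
Your opening move---running a $(K_X+B-\epsilon B^{=1}+\mathbf M_X)$-MMP to a Mori fiber space $q\colon\overline X\to Z$---is exactly Step~1 of the paper's proof, and your verification of (1), (3), (4) is in the right spirit (though the argument for (4) is cleaner stated as: since each step is $B^{=1}$-positive, on the new model the strict transform of $B^{=1}$ is anti-ample over the contraction, so the flipped locus and the image of any contracted divisor are contained in the strict transform of $B^{=1}$; hence the indeterminacy locus of $\phi^{-1}$ lies in $\overline B^{=1}$).

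The gap is in your treatment of (2). You correctly identify that the canonical bundle formula yields a lower-dimensional generalized pair on $Z$, and you propose to apply the inductive hypothesis there and then ``lift''. But this lifting is the entire content of the argument, not a formality. Concretely: the inductive output on the base is a rational map $Z\dashrightarrow\overline Z$ together with $\overline Z\to Z'$; you have a morphism $\overline X\to Z$ but no morphism to $\overline Z$, so you cannot simply compose. One must modify $\overline X$ birationally to produce a model mapping to $\overline Z$, and the difficulty is doing this while preserving (1), (3), (4) \emph{and} ensuring that vertical generalized lc centers on the new model of $X$ actually dominate $Z'$; the latter does not follow just from knowing that the lc centers of the base pair dominate $Z'$. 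The paper handles this not by a single inductive call but by an explicit iteration (Steps~2--6): it first arranges that all vertical lc centers lie in $(B')^{=1,v}$; then runs an MMP on $X'$ over $Z$ so that the vertical part becomes pulled back from a birational model $\widetilde Z$ of $Z$; then runs a $(K_{\widetilde Z}+\Delta_{\widetilde Z}+M_{\widetilde Z})$-MMP on the base and lifts it \emph{step by step} to $\widetilde X$ via auxiliary relative MMPs (Claim~4); then performs further modifications to recover relative ampleness (Step~5), working with a weakened property $(1')$ that is stable under passing to dlt models. Your appeal to Lemmas~\ref{lemma patch} and~\ref{lemma supporto} is appropriate for the bookkeeping after taking dlt models, but those lemmas do not by themselves produce the lift of the base MMP to the total space.

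A secondary issue: the pair $(Z,B_Z,\mathbf N)$ coming from Theorem~\ref{generalized canonical bundle formula} is only generalized log canonical, not $\mathbb Q$-factorial generalized dlt, so your inductive hypothesis does not apply to it as stated; you must first replace it by a generalized dlt model, which again changes $Z$ birationally and feeds back into the lifting problem above.
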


\begin{remark} 
\label{remark about technical thm}
Properties (3) and (4) in Theorem~\ref{main technical thm for complexes} imply that $\Nklt(\overline{X},\overline{B},\mathbf{M})= \overline{B} \sups =1.$  and that $\overline{B} \geq 0$.
\end{remark}

\begin{proof}
If $(X,B, \mathbf M)$ is generalized klt, then it suffices to take $Z=\overline{X}=X$.
Hence, we can assume that $\Nklt(X,B, \mathbf M) \neq \emptyset$.

\medskip

{\bf Step 0}. {\it In this step we reduce to the case when the dual complex is connected.} 
\newline
If $\Nklt(X,B, \mathbf M)$ is disconnected, the statement follows from Theorem~\ref{main theorem} and its proof.
More precisely, as $(X,B,\bM.)$ is already $\mathbb Q$-factorial generalized dlt, we may apply the algorithm in the proof of Theorem~\ref{main theorem} to $X$: 
we run a $(K_X+B^{<1}+\bM X.)$-MMP $X \dashrightarrow \overline{X}$, which will terminate, by construction, with a Mori fibre space $\overline{X} \rar Z$, the morphism whose existence is claimed in the statement of~\ref{main technical thm for complexes}.
By Theorem~\ref{main theorem}, items (1)-(2) in the statement of~\ref{main technical thm for complexes} hold.
By construction, $X \drar \overline{X}$ is a birational contraction, so item (3) is satisfied.
Lastly, this MMP is a $(-B^{=1})$-MMP, and thus by the results in \S~\ref{sect.bir.case} also item (4) is satisfied.
Therefore, we can assume that $\Supp(B \sups =1.)$ is connected and non-empty.\\

{\bf Step 1}. 
{\it 
In this step, we construct a birational map $X \drar X'$ and a fibration $X' \rar Z$ satisfying properties (1), (3), and (4) in the statement of the theorem.
}
\newline
We run a $(\K X. + B \sups <1. +M)$-MMP with scaling of an ample divisor
\begin{equation}
\label{eqn.MMP}
\xymatrix{
X =X_0 \ar@{-->}[r]^{\pi_1} &X_1 \ar@{-->}[r]^{\pi_2} &\dots \ar@{-->}[r]^{\pi_n} & X_n= X'  \ar[r]^q & Z,
}
\end{equation}
and we define $B_i \coloneqq \pi_{i \ast} B_{i-1}$, and $B_0 \coloneqq B$.
As $\K X. + B \sups <1. + M \sim_\qq -B \sups =1.$, each step of the $(\K X. + B \sups <1. +M)$-MMP in~\eqref{eqn.MMP} is $(B \sups =1.)$-positive; 
moreover, this run of the MMP terminates with a Mori fiber space $q \colon X' \rar Z$ and we have an induced generalized pair $(X', B', \mathbf{M})$, $B' \coloneqq B_n$, such that $K_{X'}+B' +\mathbf{M}_{X'} \sim_\mathbb{Q} 0$.
As each step of~\eqref{eqn.MMP} is $(B \sups =1.)$-positive, the indeterminacy locus of the inverse map $\pi_i^{-1} \colon X_{i} \dashrightarrow X_{i-1}$ is contained in $B_i \sups =1.$.
This implies that property (4) in the statement of the theorem holds for $(X', B', \mathbf{M})$;
property (1) is satisfied as $q$ is a Mori fiber space, while (3) holds, by construction, since $X \dashrightarrow X'$ is a birational contraction.
\newline
If property (2) holds for $(X', B', \mathbf{M})$ then the proof of the theorem is completed.
Hence, we shall assume that not all generalized log canonical centers of $(X',B', \mathbf{M})$ dominate $Z$.\\

{\bf Step 2}. 
{\it In this step  assuming property (2) does not hold for $(X', B', \mathbf{M})$, we construct a birational map $X \dashrightarrow \widetilde{X}$ over $Z$, a generalized pair $(\widetilde X, \widetilde B, \mathbf{M})$ crepant to $(X,  B, \mathbf{M})$ and a commutative diagram 
\begin{align*}
\xymatrix{
\widetilde X \ar[rr]^{\widetilde q} \ar[dr]& & \widetilde Z \ar[dl]\\
& Z &
}, 
\end{align*} 
where $\widetilde{q}$ is a Mori fibration, $\widetilde{Z} \rar Z$ is a birational contraction and all the lc centers of $(\widetilde X, \widetilde B, \mathbf{M})$ that do not dominate $Z$ are fully supported on the pull-back of a divisor on $\widetilde Z$.
Moreover, $(\widetilde X, \widetilde B, \mathbf{M})$ satisfies properties (3), (4) and $\widetilde{B} \sups =1.$ fully supports an effective divisor that is relatively ample over $\widetilde{Z} \setminus \widetilde{q}(\widetilde{B} \sups =1,v.)$.
}
\newline
By construction, $(X', B', \mathbf{M})$ is $\qq$-factorial and generalized dlt away from $B'^{=1}$.
We pass to a generalized dlt model $\psi \colon X'' \rar X'$, 
\begin{align*}
K_{X''}+ B'' + M'' = \psi^\ast(K_{X'}+B'+ \mathbf{M}_{X'}), \ M'' \coloneqq \mathbf{M}_{X''}.
\end{align*}
Property (3) holds on $X''$ by construction.
The morphism $\psi$ is an isomorphism over $X' \setminus B'^{=1}$, hence property (4) holds on $X''$ as well.
Lemma~\ref{lemma patch} implies that property (1) is also preserved.
\newline
Thus, up to substituting $X'$ with $X''$, we can assume that $(X',B', \mathbf{M})$ is $\mathbb Q$-factorial and generalized dlt and that any of its log canonical places is a log canonical place for the pair $(X',B')$.
\newline
To simplify the notation, we write $M' \coloneqq \mathbf{M}_{X'}$.\\

{\bf Claim 1}.
{\it
Up to substituting $(X', B', \bf M)$ with a crepant birational generalized pair on a higher model of $X'$, we can assume that any generalized log canonical center of $(X', B', \bf M)$ vertical over $Z$ is contained in $B'^{=1, v}$ and properties (1), (3), (4) still hold on $X'$.\\
In particular, we can assume that for any $0 \leq \delta < 1$, the generalized pair $(X',B'^{<1} + B'^{=1, h} + \delta B'^{=1, v} + \overline{M})$ has no vertical log canonical centers.
}
\begin{proof}[Proof of Claim 1]
Let $W$ be a log canonical center for  $(X',B')$ vertical over $Z$.
If $W \not \subset B'^{=1, v}$, then $W$ is the intersection of components of $B'^{=1,h}$.
Let $\mathcal{C}$ be the subcomplex
of $\mathcal{DMR}(X',B'^{=1})$ whose cells correspond to all components of $B'^{=1}$ together with the strata of $B'^{=1,h}$ that dominate $Z$ and all strata of $B'^{=1,v}$. 
The subcomplex $\mathcal{C}$ is closed inside $\mathcal{DMR}(X',B'^{=1})$, as it is closed within each simplex of $\mathcal{DMR}(X',B'^{=1})$: in fact, if a stratum $W'$ of $\mathcal{DMR}(X',B'^{=1, h})$ does not dominate $Z$, the same holds for any stratum $W'' \subset W'$.
Thus, up to substituting $(X', B')$ with the crepant birational pair constructed by~\cite{KX16}*{Lemma~57}, we can assume that $B'^{=1,v}\neq 0$ and that $(X',B'^{<1} + B'^{=1,h},\bM.)$ has no vertical log canonical centers.
By Lemma~\ref{lemma patch} and~\cite{KX16}*{Lemma~57}, this process preserves properties (1), (3), and (4).
\end{proof}
\noindent
By construction $B'^{=1}$ fully supports an effective $\mathbb{Q}$-divisor $H$ ample over $Z$,
hence, for $0 < \epsilon \ll1$, the support of $B'^{=1} -\epsilon H$ coincides with that of $B'^{=1}$, and moreover $\lfloor B'^{=1} -\epsilon H \rfloor =0$, so that
\begin{align}
\label{eqn:thm.4.3.1}
\K X'. + B'^{<1} + (B'^{=1} -\epsilon H)^h + \mathbf{M}_{X'} + \epsilon H \sim_\mathbb{Q} -F', \ F' \coloneqq (B'^{=1} -\epsilon H)^v,
\end{align}
and $(X', B'^{<1} + (B'^{=1} -\epsilon H)^h + \mathbf{M}_{X'} + \epsilon H)$ is generalized klt over $Z$.
\newline
By~\cite{BZ16}*{Lemma~4.4}, we can run a $(\K X'. +  B'^{<1} + (B'^{=1} -\epsilon H)^h + \mathbf{M}_{X'} + \epsilon H)$-MMP with scaling of an ample divisor over $Z$, that terminates with a relatively good minimal model/Z 
\begin{align}
\label{eqn:MMP.4.3}
\xymatrix{
X' \ar@{-->}[rr] \ar[drr]  & &\widetilde X \ar[rr]^{\widetilde q} \ar[d] & & \widetilde Z \ar[dll]\\
& & Z & &
}
\end{align}
We denote by $\widetilde B, \widetilde F, \widetilde H$ the strict transforms of $B', F', H$ on $\widetilde X$.
The morphism $\widetilde Z \rar Z$ is birational, as $\overline F$ is vertical$/Z$.
Since the MMP in~\eqref{eqn:MMP.4.3} is positive for $F'$ by construction, then $\widetilde F > 0$ and $\widetilde F \sim_{\mathbb{Q}, \widetilde q} 0$.
The map $X \drar \widetilde X$ satisfies properties (3) and (4) in view of~\cite{KX16}*{Lemma~55} and the fact that $\Supp(F') \subset \Supp(B'^{=1})$. \\
On the other hand, property (1) may no longer hold on $\widetilde X$.
Nevertheless, the following claim holds.\\

{\bf Claim 2}. 
{\it
$\widetilde{B} \sups =1.$ fully supports an effective divisor that is relatively ample over $\widetilde{Z} \setminus \widetilde{q}(\widetilde{B} \sups =1,v.)$.}

\begin{proof}[Proof of Claim 2]
As we modified $X'$ by means of an MMP for a divisor supported on $(B') \sups =1,v.$, property (1) still holds true over the complement of $q((B') \sups =1,v.)$.
\end{proof}

{\bf Step 3}. 
{\it 
In this step  we show that there exist generalized pairs $(\widetilde Z, B_{\widetilde Z}, \mathbf{M}_{\widetilde Z})$, $(\widetilde Z, \Delta_{\widetilde Z}, \mathbf{M}_{\widetilde Z})$ on $\widetilde Z$ such that $K_{\widetilde Z}+ B_{\widetilde Z}+ \mathbf{M}_{\widetilde Z} \sim_\mathbb{Q} 0$ and $-(K_{\widetilde Z}+ \Delta_{\widetilde Z}+ \mathbf{M}_{\widetilde Z})$ is pseudoeffective.
}
\newline
The generalized pair $(\widetilde{X},\widetilde B \sups <1. + \widetilde B \sups =1,h. + \epsilon \widetilde H^v, \mathbf{M})$ has no log canonical centers vertical over $\widetilde Z$, and
\begin{align*}
\label{eqn.hv}
\K \widetilde X. + \widetilde B \sups <1. + \widetilde B \sups =1,h. + \epsilon \widetilde H^v + \mathbf{M}_{\widetilde X} \sim \subs \qq,\widetilde q. 0,
\end{align*}
by~\eqref{eqn:thm.4.3.1}.
Theorem~\ref{generalized canonical bundle formula} implies the existence of generalized pairs $(\widetilde Z, B_{\widetilde Z}, \mathbf{M}_{\widetilde Z})$, $(\widetilde Z, \Delta_{\widetilde Z}, \mathbf{M}_{\widetilde Z})$ such that 
\begin{eqnarray}
\label{eqn:cbf.tildez}
0\sim_\mathbb{Q} \K \widetilde X. + \widetilde B + \mathbf{M}_{\widetilde X} &\sim \subs \qq.  \widetilde q ^\ast (\K \widetilde Z. + B \subs \widetilde Z. + \mathbf M \subs \widetilde Z.), \\
\nonumber
\K \widetilde X. + \widetilde B \sups <1. + \widetilde B \sups =1,h. + \epsilon \widetilde H^v + \mathbf{M}_{\widetilde X} &\sim \subs \qq.  \widetilde q^\ast (\K \widetilde Z. + \Delta \subs \widetilde Z. + \mathbf M \subs \widetilde Z.).
\end{eqnarray}
The two generalized pairs in~\eqref{eqn:cbf.tildez} have the same moduli b-divisor $\mathbf{M}_{\widetilde{Z}}$, since
\begin{align*}
(\K \widetilde X. + \widetilde B + \mathbf{M}_{\widetilde X}) \vert_{\widetilde{X}_\eta}= (\K \widetilde X. + \widetilde B \sups <1. + \widetilde B \sups =1,h. + \epsilon \widetilde H^v + \mathbf{M}_{\widetilde X})\vert_{\widetilde{X}_\eta},
\end{align*}
along the generic fiber $\widetilde{X}_\eta$ of $\widetilde q$.
Furthermore, 
\begin{align*}
B \subs \widetilde Z. - \Delta \subs \widetilde Z. \geq 0 \ 
{\rm and} \
\widetilde q ^\ast (B \subs \widetilde Z. - \Delta \subs \widetilde Z.)= \widetilde F,
\end{align*}
since $\widetilde B \sups =1,v. = \epsilon \widetilde H^v  + \widetilde F$.
As no log canonical center of $(\widetilde{X},\widetilde B \sups <1. + \widetilde B \sups =1,h. + \epsilon \widetilde H^v, \mathbf{M})$ is vertical over $\widetilde Z$, the generalized pair $(\widetilde Z, \Delta \subs \widetilde Z. + \mathbf M \subs \widetilde Z.)$ is generalized klt, cf.~\cite{Amb99}*{proof of Proposition 3.4}.
Hence,
\begin{eqnarray}
\label{eqn.zhat}
\K \widetilde Z. + \Delta \subs \widetilde Z. + \mathbf M \subs \widetilde Z.&\sim_\qq \Delta \subs \widetilde Z. - B \subs \widetilde Z.,\\
\nonumber
K \subs \widetilde X. + \widetilde B \sups <1. + \widetilde B \sups =1,h. + \epsilon \widetilde H^v + \mathbf{M}_{\widetilde X} &\sim \subs \qq. \widetilde{q}^\ast(\Delta \subs \widetilde Z. - B \subs \widetilde Z.),
\end{eqnarray}
which concludes the proof of this step.\\

{\bf Step 4}. 
\emph{
In this step  we obtain a birational contraction $\widetilde{Z} \drar \widehat Z$ together with a Mori fiber space $\widehat Z \rar W$.
Furthermore, we show that we can lift the birational contraction $\widetilde{Z} \drar \widehat Z$ to a birational contraction $\widetilde X \drar \widehat X$ and $\widehat X$ is endowed with a morphism $\widehat X \rar \widehat Z$.
}
\newline
By construction, $(\widetilde X,\widetilde B \sups <1. + \widetilde B \sups =1,h. + \epsilon \widetilde H^v,\bM.)$ satisfies the following properties:
\begin{itemize}
    \item[(a)] it is generalized dlt;
    \item[(b)] all of its log canonical centers dominate $\widetilde Z$; and
    \item[(c)] $\widetilde B \sups =1,h.$ fully supports a divisor $\widetilde H ^h$ that is ample over $\widetilde Z \setminus \Supp(B \subs \widetilde Z. - \Delta \subs \widetilde Z.)$.
\end{itemize}
{\bf Claim 3}. 
{\it
The following properties hold:
 \begin{itemize}
     \item[(d)] 
$(\widetilde X,\widetilde B \sups <1. + \widetilde B \sups =1,h. - \delta \widetilde H ^h + \epsilon \widetilde H^v,\bM.)$ is generalized klt, where $0 < \delta \ll 1$;

\item[(e)]
for a general element $\widetilde D \in |\widetilde H ^h/\widetilde Z|_\qq$, the generalized pair $(\widetilde X,\widetilde B \sups <1. + \widetilde B \sups =1,h. - \delta \widetilde H ^h + \epsilon \widetilde H^v + \delta \widetilde D,\bM.)$ is generalized klt, and
\begin{align}
\K \widetilde X. + \widetilde B \sups <1. + (\widetilde B \sups =1,h. - \delta \widetilde H ^h) + \epsilon \widetilde H^v + \delta \widetilde D + \bM \widetilde X. \sim_{\mathbb{Q}, \widetilde{Z}} \widetilde{q}^\ast (\Delta \subs \widetilde Z. - B \subs \widetilde Z.).    
\end{align}
\end{itemize}
}
\begin{proof}[Proof of Claim 3]

\begin{itemize}
    \item[(d)] 
This follows at once from properties (a-b) stated right before Claim 3.

    \item[(e)]
As $\widetilde H ^h$ is ample over $\widetilde Z \setminus \Supp(B \subs \widetilde Z. - \Delta \subs \widetilde Z.)$ by Claim 2, the relative $\qq$-linear series $|\widetilde H ^h/\widetilde Z|_\qq$ is free over $\widetilde Z \setminus \Supp(B \subs \widetilde Z. - \Delta \subs \widetilde Z.)$.
In particular, a sufficiently general member $\widetilde{D} \in |\widetilde H ^h/\widetilde Z|_\qq$ avoids the generic point of every log canonical center of $(\widetilde X,\widetilde B \sups <1. + \widetilde B \sups =1,h. + \epsilon \widetilde H^v,\bM.)$, as these all dominate $\widetilde Z$.
The generalized log canonical centers of $(\widetilde X,\widetilde B \sups <1. + \widetilde B \sups =1,h. - \delta \widetilde H ^h + \epsilon \widetilde H^v + \delta \widetilde D,\bM.)$ are a subset of the ones of  $(\widetilde X,\widetilde B \sups <1. + \widetilde B \sups =1,h. + \epsilon \widetilde H^v,\bM.)$, cf.~\cite{KM98}*{Corollary~2.33}.
Thus, as $\widetilde D$ avoids the log canonical centers that dominate $\widetilde Z$, the conclusion follows.
\end{itemize}
This concludes the proof.
\end{proof}
We run a $(\K \widetilde Z. + \Delta \subs \widetilde Z. + M \subs \widetilde Z.)$-MMP with scaling of an ample divisor 
\begin{align}
\label{eqn:MMP.Z.4.3}
\xymatrix{
\widetilde X \ar[d]^{\widetilde q} \\
\widetilde Z \coloneqq \widetilde{Z}_0 \ar@{-->}[r]^{\widetilde{\psi}_{1}} &
\widetilde{Z}_1 \ar@{-->}[r]^{\widetilde{\psi}_{2}} &
\dots \ar@{-->}[r]^{\widetilde{\psi}_{n-1}} &
\widetilde{Z}_{n-1} \ar@{-->}[r]^{\widetilde{\psi}_{n}} &
\widetilde{Z}_n \eqqcolon \widehat Z \ar[d]^r 
\\
& & & & W 
}
\end{align}
that terminates with a Mori fiber space $r \colon \widehat Z \rar W$.
We define
\begin{align*}
&\Delta \subs \widetilde{Z}_{0}. \coloneqq \Delta \subs \widetilde{Z}., \
M \subs \widetilde{Z}_{0}. \coloneqq M \subs \widetilde Z.,
\widetilde F \subs 0. \coloneqq \widetilde{F},
\\
& \Delta \subs \widetilde{Z}_{i+1}. \coloneqq \widetilde{\psi}_{i+1} \Delta \subs \widetilde{Z}_{i}., \
M \subs \widetilde{Z}_{i+1}. \coloneqq \widetilde{\psi}_{i+1} M \subs \widetilde{Z}_{i}. .
\end{align*}

{\bf Claim 4.}
{\it Let us define $(X_0, B_0, \mathbf M)\coloneqq(\widetilde{X}, \widetilde{B}, \mathbf{M})$.
For any $0\leq i \leq n$ there exists a $\mathbb{Q}$-factorial generalized lc pair $(\widetilde{X}_{i}, \widetilde{B}_i, \mathbf{M})$ with $\widetilde{B}_i \neq 0$, and morphisms $m_i \colon X_i \rar Z_i$ such that the following diagram commutes
\begin{align*}
\label{eqn:MMP.Z.4.3b}
\xymatrix{
\widetilde{X}_0
\ar@{-->}[r]^{\widetilde{\pi}_{1}} \ar[d]_{m_0 \coloneqq \widetilde{q}}&
\widetilde{X}_1 \ar@{-->}[r]^{\widetilde{\pi}_{2}} \ar[d]^{m_1} &
\dots \ar@{-->}[r]^{\widetilde{\pi}_{n-1}} &
\widetilde{X}_{n-1} \ar@{-->}[r]^{\widetilde{\pi}_{n}} \ar[d]^{m_{n-1}}&
\widetilde{X}_n \eqqcolon \widehat X \ar[d]^{m_n \eqqcolon \widehat{q}} \\
\widetilde{Z}_0 \ar@{-->}[r]^{\widetilde{\psi}_{1}} &
\widetilde{Z}_1 \ar@{-->}[r]^{\widetilde{\psi}_{2}} &
\dots \ar@{-->}[r]^{\widetilde{\psi}_{n-1}} &
\widetilde{Z}_{n-1} \ar@{-->}[r]^{\widetilde{\psi}_{n}} &
\widetilde{Z}_n \eqqcolon \widehat Z \ar[d]^r
\\
& & & & W 
}
\end{align*}
each $\widetilde{\pi}_i$ is a birational contraction, $\widetilde{m}_i$ is a contraction satisfying properties (a-e) in Claim 3, 
Moreover, for any $0 \leq i \leq n$, $\Nklt(\widetilde{X}_{i}, \widetilde{B}_i, \mathbf{M}) = \Supp(\widetilde B \sups =1.\subs i.)$
}.
\begin{proof}
We use induction on $i \in \{0, 1, \dots, n\}$.
We define $\widetilde F \subs i.\coloneqq \widetilde \pi \subs i \ast . \widetilde F \subs i-1.$.\\
For $i=0$, there is nothing to prove.
Thus, we can assume that the statement of the claim holds up to $i-1$ and we shall prove that it holds for $i$ as well.
\\
Hence it suffices to show that the diagram 
\begin{eqnarray}
\label{eqn:incom.diag}
\xymatrix{
\widetilde{X}_{i-1} \ar[d]^{m_i}
\\
\nonumber
\widetilde{Z}_{i-1} \ar@{-->}[r]^{\widetilde{\psi}_{i}} & 
\widetilde{Z}_{i}
}
\end{eqnarray}
can be completed to a diagram 
\begin{eqnarray}
\label{eqn:incom.diag2}
\xymatrix{
\widetilde{X}_{i-1} \ar[d]^{m_{i-1}} \ar@{-->}[r]^{\widetilde{\pi}_{i}} &
\widetilde{X}_{i} \ar[d]^{m_i}
\\
\widetilde{Z}_{i-1} \ar@{-->}[r]^{\widetilde{\psi}_{i}} & 
\widetilde{Z}_{i}
}
\end{eqnarray}
\noindent
satisfying the conditions in the statement of the claim.
If $\widetilde{\psi}_i \colon \widetilde{Z}_{i-1} \rar \widetilde{Z}_i$ is a divisorial contraction, then for a general choice of $\widetilde{D}_{i-1} \in |\widetilde{H}^h_{i-1}/\widetilde{Z}_{i}|_\mathbb{Q}$, in view of property (e) of Claim 3, the generalized pair $(\widetilde{X}_{i-1}, \widetilde B \sups <1. \subs i-1. + (\widetilde B \sups =1,h. \subs i-1. -\delta \widetilde H \subs i-1. \sups h.) + \epsilon \widetilde H^v_{i-1}+\delta \widetilde{D}_{i-1}, \mathbf{M})$ is generalized klt and
\begin{equation}
\label{eqn:lin.eq.flip.claim}    
\K \widetilde{X}_{i-1}.+ \widetilde B \sups <1. \subs i-1. + (\widetilde B \sups =1,h. \subs i-1. -\delta \widetilde H \subs i-1. \sups h.) + \epsilon \widetilde H^v_{i-1} + \delta \widetilde{D}_{i-1}+ \mathbf{M} \subs \widetilde{X}_{i-1}. \sim_{\widetilde{Z}_i, \mathbb{Q}} m \subs i-1. \sups \ast. (\widetilde{\Delta}_{\widetilde{Z} \subs i-1.} - B_{\widetilde{Z} \subs i-1.}).
\end{equation}
By~\cite{BZ16}*{Lemma~4.4}, we can run a $(\K \widetilde{X}_{i-1}.+ \widetilde B \sups <1. \subs i-1. + (\widetilde B \sups =1,h. \subs i-1. -\delta \widetilde H \subs i-1. \sups h.) + \epsilon \widetilde H^v_{i-1}+ \delta \widetilde{D}_{i-1}+ \mathbf{M} \subs \widetilde{X}_{i-1}.)$-MMP over $Z_i$ with scaling of an ample divisor and this terminates, as $\widetilde D \subs i-1.$ is big over $Z_{i}$.
Thus, we can define $\widetilde{X}_i$  in~\eqref{eqn:incom.diag2} to be a good minimal model for this $(\K \widetilde{X}_{i-1}.+ \widetilde B \sups <1. \subs i-1. + (\widetilde B \sups =1,h. \subs i-1. -\delta \widetilde H \subs i-1. \sups h.) + \epsilon \widetilde H^v_{i-1} + \delta \widetilde{D}_{i-1}+ \mathbf{M} \subs \widetilde{X}_{i-1}.)$-MMP over $\widetilde{Z}_i$.
As $\widetilde F \subs i-1. = m^\ast \subs i-1. (B_{\widetilde{Z}_{i-1}} - \Delta_{\widetilde{Z}_{i-1}})$,~\eqref{eqn:lin.eq.flip.claim} implies that each step of the MMP above is $\widetilde F \subs i-1.$-positive.
Hence, the exceptional locus of $\widetilde \pi \sups -1. \subs i.$ must be contained in $\widetilde F \subs i.$, and $\widetilde \pi \subs i.$ is an isomorphism above $\Supp(B \subs \widetilde{Z}_i. - \Delta \subs \widetilde{Z}_i.)$, so that properties (a-e)
also hold for the morphism $m_i$.
\\
Similarly, if $\widetilde{\psi}_i$ is a flipping contraction
\begin{align*}
\xymatrix{
\widetilde{Z}_{i-1} \ar@{-->}[rr]^{\widetilde{\psi}_i} \ar[dr]_{p_i} & & \widetilde{Z}_i \ar[dl]^{p'_i} \\
& Z_i &
}
\end{align*}
for a general choice of $\widetilde{D}_{i-1} \in |\widetilde{H}^h_{i-1}/ Z_{i}|_\mathbb{Q}$ the generalized pair $(\widetilde{X}_{i-1}, \widetilde B \sups <1. \subs i-1. + (\widetilde B \sups =1,h. \subs i-1. -\delta \widetilde H \subs i-1. \sups h.) + \epsilon \widetilde H^v_{i-1}+ \delta \widetilde{D}_{i-1}, \mathbf{M})$ is klt and
\begin{align}
\label{eqn:lin.eq.flip.claim2}
\K \widetilde{X}_{i-1}.+ \widetilde B \sups <1. \subs i-1. + (\widetilde B \sups =1,h. \subs i-1. -\delta \widetilde H \subs i-1. \sups h.) + \epsilon \widetilde H^v_{i-1}+ \delta \widetilde{D}_{i-1}+ \mathbf{M} \subs \widetilde{X}_{i-1}. \sim_{ Z_i, \mathbb{Q}} m \subs i-1. \sups \ast. (\widetilde{\Delta}_{\widetilde{Z} \subs i-1.} - B_{\widetilde{Z} \subs i-1.}).
\end{align}
By~\cite{BZ16}*{Lemma~4.4}, we can run a $(\K \widetilde{X}_{i-1}.+ \widetilde B \sups <1. \subs i-1. + (\widetilde B \sups =1,h. \subs i-1. -\delta \widetilde H \subs i-1. \sups h.) + \epsilon \widetilde H^v_{i-1}+ \delta \widetilde{D}_{i-1}+ \mathbf{M} \subs \widetilde{X}_{i-1}.)$-MMP over $Z_i$ with scaling of an ample divisor and this terminates, as $\widetilde D \subs i-1.$ is big over $Z_{i}$.
Thus, we can define $\widetilde{X}_i$  in~\eqref{eqn:incom.diag2} to be a good minimal model for this $(\K \widetilde{X}_{i-1}.+ \widetilde B \sups <1. \subs i-1. + (\widetilde B \sups =1,h. \subs i-1. -\delta \widetilde H \subs i-1. \sups h.) + \epsilon \widetilde H^v_{i-1}+ \delta \widetilde{D}_{i-1}+ \mathbf{M} \subs \widetilde{X}_{i-1}.)$-MMP over $Z_i$, since
$\widetilde{Z}_i= {\rm Proj}_{\mathcal{O}_{Z_i}}(\bigoplus_
{m \geq 0} p_{i\ast}'\mathcal{O}_{Z_i}(m(\Delta_{\widetilde{Z}_i} - B_{\widetilde{Z}_i})))$ and by~\eqref{eqn:lin.eq.flip.claim2}.
As $\widetilde F \subs i-1. = m^\ast \subs i-1. (B_{\widetilde{Z}_{i-1}} - \Delta_{\widetilde{Z}_{i-1}})$,~\eqref{eqn:lin.eq.flip.claim2} implies that each step of the MMP above is $\widetilde F \subs i-1.$-positive.
Hence, the exceptional locus of $\widetilde \pi \sups -1. \subs i.$ must be contained in $\widetilde F \subs i.$, and $\widetilde \pi \subs i.$ is an isomorphism above $\Supp(B \subs \widetilde{Z}_i. - \Delta \subs \widetilde{Z}_i.)$, so that properties (a-d) also hold for the morphism $m_i$.
\end{proof}
\noindent
We define divisors $\widehat F, \widehat H$ to be the strict transforms on $\widehat X$ of $F, H$, respectively;
thus, $\widehat F = \widehat q ^\ast  (B \subs \widehat Z. - \Delta \subs \widehat Z.)$.
By construction, $\widetilde Z \drar \widehat Z$ is an isomorphism outside of $\Supp (B \subs \widehat Z. - \Delta \subs \widehat Z.)$; thus,~\cite{KX16}*{Lemma~54}\footnote{We note that the cited result is proven for $\mathbb Q$-factorial dlt pairs, while we use it in the context of $\mathbb Q$-factorial generalized klt pairs. Since, by our constructions, the generalized klt pair has a relatively big boundary, then by standard perturbation arguments, the generalized klt pair is equivalent over the base to a klt pair.} implies that the rational map $\widetilde X \drar \widehat X$ is an isomorphism over the complement of $\Supp (B \subs \widehat Z. - \Delta \subs \widehat Z.)$.
As $\Supp(\widehat F) = \Supp (\widehat B \sups =1,v.)$, $X \drar \widehat X$ satisfies properties (3) and (4).\\

{\bf Step 5}. \emph{In this step we modify $\widehat X$ so that property (1) is achieved for $\widehat X \rar W$.}
\newline
By construction, $\widehat B \sups =1.$ fully supports a divisor $\widehat H$ that is $\widehat q$-ample over $\widehat Z \setminus \Supp (B \subs \widehat Z. - \Delta \subs \widehat Z.)$.
As $r$ is a $(B \subs \widehat Z. - \Delta \subs \widehat Z.)$-Mori fiber space, then $\Supp (B \subs \widehat Z. - \Delta \subs \widehat Z.)$ fully supports an $r$-ample divisor $H \subs \widehat Z.$.
Since $\widetilde q \sups -1. (\Supp(B \subs \widetilde Z. - \Delta \subs \widetilde Z.)) = \Supp(\widetilde B \sups =1,v.)$, by construction of the birational contractions $\widetilde Z \drar \widehat Z$, $\widetilde X \drar \widehat X$ and $\qq$-factoriality of all varieties involved, it follows that
$\widehat q \sups -1. (\Supp (B \subs \widehat Z. - \Delta \subs \widehat Z.)) = \Supp (\widehat B \sups =1,v.)$.
Thus, both $\widehat H$ and $\widehat q ^\ast  H \subs \widehat Z.$ are supported on $\widehat B \sups =1.$.
Furthermore, since $\widehat H$ is fully supported on $\widehat B \sups =1.$, it follows that $\widehat H + m \widehat q ^\ast  H \subs \widehat Z.$ is fully supported on $\widehat B \sups =1.$ for every $m \geq 0$.
Similarly, since $\widehat H ^h$ is fully supported on $\widehat B \sups =1,h.$ and $\widehat q ^\ast  H \subs \widehat Z.$ is fully supported on $\widehat B \sups =1,v.$, $\widehat H ^h + m \widehat q ^\ast  H \subs \widehat Z.$ is fully supported on $\widehat B \sups =1.$ for every $m > 0$.
\newline
If $\widehat H$ or $\widehat H ^h$ are $\widehat q$-ample, then $\widehat H + m \widehat q ^\ast  H \subs \widehat Z.$ is $r \circ \widehat q$-ample for $m \gg 0$, and property (1) holds on $\widehat X$.
\newline
Thus, we can assume that $\widehat H ^h$ is not $\widehat q$-ample: we will construct a new birational model of $\widehat{X}$ where relative ampleness of $\widehat H ^h$ over $\widehat{Z}$ is achieved.
In Step 4 we showed that there exists an effective $\widehat{D}\sim \subs \qq,\widehat{Z}. \widehat{H}^h$ such that
\begin{itemize}
    \item[(f)] for $0 < \delta \ll 1$, $(\widehat X, \widehat B \sups <1. + (\widehat B \sups =1,h. -\delta \widehat{H}^h) + \epsilon \widehat H^v + \delta \widehat{D}, \mathbf{M})$ is generalized klt; and
    \item[(g)] by (f) for $0< \sigma \ll \delta \ll 1$, then $(\widehat X, \widehat B \sups <1. + (\widehat B \sups =1,h. -\delta \widehat{H}^h) + \epsilon \widehat H^v + \delta \widehat{D}+\sigma\widehat{D}, \mathbf{M})$ is generalized klt; moreover,
    \begin{align} 
\label{eqtn_in_blu}
\K \widehat{X}. + \widehat B \sups <1. + (\widehat B \sups =1,h. - \delta \widehat H ^h) + \epsilon \widehat H^v + \delta \widehat D +\sigma\widehat{D} + \bM \widehat{X}. \sim \subs \qq,\widehat{Z}. \sigma \widehat{H}^h,
    \end{align}
and $\widehat{H}^h$ is relatively ample over $\widehat{Z} \setminus \Supp(B \subs \widehat{Z}. - \Delta \subs \widehat{Z}.)$.

\end{itemize}

By~\eqref{eqtn_in_blu} and~\cite{BZ16}*{Lemma~4.4}, if we run a relative MMP with scaling of an ample divisor over $\widehat{Z}$ for $(\widehat X, \widehat B \sups <1. + (\widehat B \sups =1,h. - \delta \widehat H ^h) + \epsilon \widehat H^v + \delta \widehat D +\sigma\widehat{D}, \mathbf{M})$,
this must terminate with a relative good minimal model $\check q ' \colon (\check X ', \check B ', \bM.) \rar \widehat{Z}$ over $\widehat{Z}$; 
we denote by $\check q \colon (\check X, \check B,\bM.) \rar \widehat{Z}$ the corresponding relative ample model.
The MMP that we have just described is an isomorphism over $\widehat{Z} \setminus \Supp(B \subs \widehat{ Z}. - \Delta \subs \widehat {Z}.)$.
The following properties hold:
\begin{itemize}
    \item 
    $\widehat X \drar \check X '$ and $\widehat X \drar \check X$ are isomorphisms over $\widehat Z \setminus \Supp (B \subs \widehat Z. - \Delta \subs \widehat Z .)$, hence, properties (3) and (4) are preserved;
    \item 
    $(\check X ', \check B ', \bM.)$ and $(\check X , \check B , \bM.)$ are generalized log canonical, and the former has $\qq$-factorial singularities.
    Furthermore, by Step 4 and the fact that $\widehat{X} \drar \check{X}'$ is an isomorphism over $\widehat{Z} \setminus \Supp(B \subs \widehat{Z}. - \Delta \subs \widehat{Z}.)$, $(\check X ', \check B ', \bM.)$ is generalized dlt over $\widehat{Z} \setminus \Supp(B \subs \widehat{Z}. - \Delta \subs \widehat{Z}.)$;
    \item
    $(\check B ')\sups =1,h.$ fully supports a divisor $(\check H ')^h$ that is relatively big and semi-ample over $\widehat{Z}$, and its push-forward to $\check X$, denoted by $\check H ^h$, is relatively ample over $\widehat{Z}$.
    As $r$ is a $(\Delta \subs \widehat{Z}. - B \subs \widehat{Z}.)$-Mori fiber space, $\Supp(B \subs \widehat{Z}.-\Delta \subs \widehat{Z}.)$ fully supports an $r$-ample divisor $H \subs \widehat{Z}.$; 
    as $\widehat{q}\sups -1.(\Supp(B \subs \widehat{Z}.-\Delta \subs \widehat{Z}.))=\Supp((\check B ')\sups =1,v.)$, then $(\check B ')\sups =1.$ fully supports a divisor that is big and semi-ample over $W$, which we denote by $\check H '$.
    Similarly, $\check B \sups =1.$ fully supports a divisor that is ample over $W$, which we denote by $\check H$.
    Furthermore, by construction, $\check H '$ is relatively ample away from $\Nklt(\check X ', \check B ', \bM.)$:
    namely, $\check X ' \drar \mathrm{Proj}_W \bigoplus_m (r \circ \check q ') _*(\O \check X '. (m \check H '))=\check X$ is a birational morphism whose exceptional locus is contained in $\Nklt(\check X ', \check B ', \bM.)$.
\end{itemize}

{\bf Claim 5}.
{\it
By construction, we have $\Nklt(\check X ', \check B ', \bM.) = \Supp((\check B')\sups =1.)$.
Furthermore, we may take a generalized dlt model $(\check X ^m, \check B ^m, \bM.) \rar (\check X ', \check B ', \bM.)$ such that properties (3) and (4) are preserved, and $\Supp((B^m)\sups =1.)$ fully supports a divisor that is relatively big and semi-ample over $\widehat Z$, and that is relatively ample away from $\Nklt(\check X ^m, \check B ^m, \bM.)$.
}
\begin{proof}
By construction, $\check X '$ is $\qq$-factorial.
By property (f), $(\widehat X, \widehat B \sups <1. + \widehat B \sups =1,h. + \epsilon \widehat H^v + \delta (\widehat{D}-\widehat{H}^h), \mathbf{M})$ is generalized klt and its generalized log canonical divisor is relatively trivial over $\widehat{Z}$.
Thus, it follows that $(\check X ', (\check B ') \sups <1. + (\check B ')\sups =1,h. + \epsilon (\check H ')^v + \delta (\check{D}'-(\check{H}')^h), \bM.)$ is generalized klt.
Since $\check X '$ is $\qq$-factorial, $(\check X ', (\check B ') \sups <1., \bM.)$ is generalized klt.
Thus, by Lemma~\ref{lemma supporto}, if we take a generalized dlt model $(\check X ^m, \check B ^m, \bM.)$ of $(\check X ', \check B ', \bM.)$, every divisor extracted has positive coefficient in the pull-back of $(\check B ')\sups =1.$.
Then, since $\Supp((\check B')\sups =1.)$ fully supports a divisor that is relatively big and semi-ample over $\widehat{Z}$ and that is relatively ample away from $\Nklt(\check X ' , \check B ', \bM.)$, then so does $\Supp((B^m)\sups =1.)$.
Indeed, every divisor extracted by $\check X ^m \rar \check X '$ appears with positive coefficient in the pull-back of $\check H '$.
Furthermore, passing to the model $\check X ^m$ preserves properties (3) and (4).
\end{proof}
{\bf Step 6}. \emph{Conclusion}.
\newline
By construction, the generalized pair $(\check X, \check B,\mathbf{M})$ together with the morphism $r \circ \check q \colon \check X \rar W$ satisfies the properties (1), (3) and (4).
If property $(2)$ holds for $\check X \rar W$, then we can stop.
Otherwise, we perform the following procedure:
\begin{enumerate}
\item[(i)]
we replace $(\check X, \check B + \check M)$ with a generalized dlt model $(\check X ^m, \check B ^m, \bM.)$ as at the end of Step 5;

\item[(ii)] 
we repeat the arguments in Steps (2-5) for $(\check X ^m, \check B ^m , \bM.)$ and $\check X ^m \rar W$, thus obtaining a new birational model $\check X'' \rar V$ satisfying properties (1), (3), (4) with $\dim V < \dim W$;

\item[(iii)]
if property $(2)$ holds for $\check X'' \rar V$, then we can stop; otherwise, we restart from (i) with input data $(\check X'', \check B'' , \bM.)$ and $\check X'' \rar V$;

\end{enumerate}
The procedure described in (i)-(iii) must terminate after finitely many iterations as at each time the dimension of the base of the fibration drops:
the final outcome will then satisfy property (2).
\\
To conclude our proof, we need to show that, in the procedure (i-iii) just introduced, whenever in (i) we take a generalized dlt model of $(\check X, \check B, \mathbf{M})$, then this does not constitute an issue with respect to property (1).
That is, if we replace $(\check X, \check B, \mathbf{M})$ with a generalized dlt model, property (1) may not hold anymore.
On the other hand, we will argue that a suitable weaker version $(1')$ of property (1) still holds and that this property is enough to run the proof.
\\
Let $(\check X ^m, \check B ^m, \bM.)$ be the generalized dlt model of $(\check X , \check B , \bM.)$ introduced in Step 5.
Recall that the morphism $\check X ^m \rar \check X$ preserves (3) and (4) for the rational map $X \drar \check X ^m$.
Recall that
$(\check B ^m)\sups =1.$ fully supports a divisor $\check H ^m$ that is big and semi-ample relatively to $W$.
Furthermore, $\check H ^m$ is relatively ample away from $\Nklt(\check X ^m, \check B ^m, \bM.)$.
\newline
Now, consider the following weaker version of property (1):
\begin{itemize}
    \item[($1'$)] $(\check B ^m)\sups =1.$ fully supports a divisor that is relatively big and semi-ample over $W$. Furthermore, this divisor is ample over $W$ away from $\Nklt(\check X ^m, \check B ^m, \bM.)$.
\end{itemize}
Notice that ($1'$) is stable under the extraction of  valuations of generalized log discrepancy $0$ for $(X, B, \mathbf{M})$.
Thus, we are free to replace $\check X '$ with a higher generalized dlt model.
In particular, ($1'$) is stable under the operations performed in Claim 1.
Furthermore, for $0 < \epsilon \ll 1$ $(\check X ^m, (\check B ^m)\sups < 1. + ((\check B ^m)\sups =1. - \epsilon \check H ^m)\sups h. +  \epsilon \check H ^m,\bM.)$ is generalized klt, and satisfies the hypotheses of~\cite{BZ16}*{Lemma~4.4}.
In particular, we can repeat Step 2, and produce new birational models $\widetilde{X}'$ and $\widetilde{W}$ of $\check X '$ and $W$, respectively.
These two models satisfy the conclusions of Step 2, besides that the conclusion of Claim 2 is replaced by its weaker version with property ($1'$).
\newline
Notice that we can reproduce Step 3 with the varieties $\widetilde{X}'$ and $\widetilde{W}$ with no significant chages.
\newline
Now, we would like to reproduce Step 4.
Notice that the key facts for Step 4 to hold are properties (d-e).
Indeed, these guarantee that we can apply~\cite{KX16}*{Lemma~54}.
By construction, properties (a-b) are still satisfied by $\widetilde{X}'$ and $\widetilde{W}$, while (c) does not hold, as (c) is equivalent to the conclusion of Claim 2.
On the other hand, since a weaker version of Claim 2 where ampleness is replaced by bigness and semi-ampleness holds, the corresponding weaker version of (c) holds.
Now, notice that this weakening of (c) is sufficient in the proof of Claim 3, as it is enough to be able to apply a Bertini-like argument.
In particular, this implies that we can replicate Step 4 with $\widetilde{X}'$ and $\widetilde{W}$.
Hence, we produce a variety $\check X ''$ which is birational to $X$, and a variety $V$ with $\dim(V) < \dim(W)$.
\newline
Finally, we can reproduce Step 5 in its entirety.
More precisely, Step 5 can be reproduced replacing property (1) with property ($1'$).
Then, the conclusions of Step 5 still hold.
Therefore, as indicated in (iii) in Step 6, we can iterate this argument until property (2) is satisfied.
Since after each iteration the dimension of the base drops, the algorithm has to terminate, and the final outcome will satisfy properties (1-4).
\end{proof}

The following is a generalization of~\cite{KX16}*{Corollary~58}.
We will use the same notation as in Theorem~\ref{main technical thm for complexes}.

\begin{corollary} 
\label{corollary reduction to fibers}
Let $(X,B,\mathbf{M})$ be a generalized log canonical pair.
Assume that $\K  X. + B + \mathbf{M}_X \sim_\qq 0$.
Then, there exist a $\qq$-factorial, generalized dlt pair $(\tilde X, \tilde B , \mathbf{M})$ which is crepant birational to $(X, B, \mathbf{M})$, and crepant birational maps
\begin{align*}
\psi \coloneqq g \sups -1. \circ \phi \colon X \drar \overline{X} \leftarrow \tilde{X},
\end{align*}
together with a morphism $\tilde q \colon \tilde X \rar Z$, such that:
\begin{itemize}
    \item[(1)] every generalized log canonical center of $(\tilde{X},\tilde {B} + \tilde {M})$ dominates $Z$;
    \item[(2)] $\tilde B \sups =1. = g \sups -1. (\overline{B}\sups =1.)$;
    \item[(3)] $g \sups -1. (\overline{B}\sups =1.)$ fully supports a $\tilde q$-big and $\tilde q$-semi-ample divisor;
    \item[(4)] every prime divisor $\tilde E \subset \tilde X$ that does not dominate $Z$ has non-empty intersection with $g \sups -1. (\overline{B}\sups =1.)$; and
    \item[(5)] $\psi \sups -1.$ is a crepant, birational contraction and $\tilde E \subset \tilde B \sups =1.$ for every $\psi \sups -1.$-exceptional divisor.
\end{itemize}
\end{corollary}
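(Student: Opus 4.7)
The plan is to reduce to Theorem~\ref{main technical thm for complexes} by inserting generalized dlt modifications at both ends and exploiting the following structural feature: since $(X,B,\mathbf M)$ is generalized log canonical, every such dlt modification is crepant, and the only divisors extracted have generalized log discrepancy zero, hence coefficient one in the corresponding boundary. First, I would apply Theorem~\ref{generalized dlt model} to $(X,B,\mathbf M)$ to obtain a $\mathbb{Q}$-factorial generalized dlt model $m \colon (X_1,B_1,\mathbf M) \to (X,B,\mathbf M)$; then apply Theorem~\ref{main technical thm for complexes} to $(X_1,B_1,\mathbf M)$ to obtain a crepant birational map $\phi_1 \colon X_1 \dashrightarrow \overline X$, a generalized pair $(\overline X,\overline B,\mathbf M)$, and a morphism $q \colon \overline X \to Z$ satisfying the four listed properties; in particular $\overline B^{=1}$ fully supports a $q$-ample divisor $\overline H$ and $\Nklt(\overline X,\overline B,\mathbf M)=\overline B^{=1}$ by Remark~\ref{remark about technical thm}. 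Finally, I would apply Theorem~\ref{generalized dlt model} once more to obtain a $\mathbb{Q}$-factorial generalized dlt model $g \colon (\tilde X,\tilde B,\mathbf M) \to (\overline X,\overline B,\mathbf M)$, and set $\tilde q \coloneqq q \circ g$ and $\psi \coloneqq g^{-1}\circ \phi_1 \circ m^{-1}$.

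Properties (1), (2), (3) and (5) then follow by essentially formal arguments. Property (2) holds because $g$ extracts exactly divisors with center in $\Nklt(\overline X,\overline B,\mathbf M)=\overline B^{=1}$, so the components of $\tilde B^{=1}$ are precisely the strict transforms of components of $\overline B^{=1}$ together with the $g$-exceptional divisors, which together describe $g^{-1}(\overline B^{=1})$. Property (3) is obtained by pulling back: $g^{\ast}\overline H$ is $\tilde q$-big and $\tilde q$-semi-ample, and an elementary coefficient check shows that every component of $g^{-1}(\overline B^{=1})$ appears with positive coefficient in $g^{\ast}\overline H$, since any $g$-exceptional divisor with center contained in a component of $\Supp \overline H$ appears in the pullback. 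Property (1) follows by crepant adjunction: a generalized log canonical center $\tilde W$ of $(\tilde X,\tilde B,\mathbf M)$ projects via $g$ to a generalized log canonical center of $(\overline X,\overline B,\mathbf M)$, which dominates $Z$ by property (2) of Theorem~\ref{main technical thm for complexes}. For property (5), using that $m$ and $g$ are birational morphisms and that $\phi_1^{-1}$ is an isomorphism over $\overline X \setminus \overline B^{=1}$ (property (4) of Theorem~\ref{main technical thm for complexes}), one sees that $\psi^{-1}$ is a birational contraction; crepantness at every step together with the generalized log canonical hypothesis force all extracted valuations to have generalized log discrepancy zero over $(X,B,\mathbf M)$, so every $\psi^{-1}$-exceptional divisor has coefficient one in $\tilde B$.

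The main obstacle is property (4), which requires a short intersection-theoretic argument; this property is vacuous if $\dim Z = 0$, so I assume $\dim Z \geq 1$. Let $\tilde E \subset \tilde X$ be a prime divisor not dominating $Z$. If $\tilde E$ is $g$-exceptional, then $\tilde E \subset g^{-1}(\overline B^{=1})$, so assume $\tilde E$ is the strict transform of a prime divisor $\overline E \subset \overline X$; in particular $\overline E$ is vertical over $Z$. Suppose, for contradiction, that $\overline E \cap \overline B^{=1} = \emptyset$. Since $q$ arises from a composition of Mori fiber spaces in the proof of Theorem~\ref{main technical thm for complexes} and hence has positive relative dimension, the general fiber of $q\vert_{\overline E} \colon \overline E \to q(\overline E)$ has dimension at least $\dim \overline X - \dim Z \geq 1$. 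Pick an irreducible curve $C$ inside such a fiber; then $C \subset \overline E$ and $C$ is contracted by $q$. The $q$-ampleness of $\overline H$ yields $\overline H \cdot C > 0$, while the assumption $\overline E \cap \Supp \overline H = \overline E \cap \overline B^{=1} = \emptyset$ yields $\overline H \cdot C = 0$, a contradiction. Hence $\overline E$ meets $\overline B^{=1}$, and consequently $\tilde E$ meets $g^{-1}(\overline B^{=1})$, completing the proof.
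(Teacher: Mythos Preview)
Your proof is correct and follows essentially the same route as the paper: take a generalized dlt model of $(X,B,\mathbf{M})$, apply Theorem~\ref{main technical thm for complexes}, then take a generalized dlt model of the output $(\overline X,\overline B,\mathbf M)$ and verify properties (1)--(5) by the same mechanisms (Remark~\ref{remark about technical thm} for (2), pulling back the $q$-ample divisor for (3), and crepantness plus property (3) of Theorem~\ref{main technical thm for complexes} for (5)). Your treatment of property (4) is more explicit than the paper's one-line appeal to the $\overline q$-ampleness of a divisor supported on $\overline B^{=1}$, but the underlying argument is identical.
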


\begin{proof}
Let $(\hat X, \hat B, \mathbf M)$ be a generalized dlt model of $(X,B, \mathbf M)$.
Let $\overline q \colon (\overline{X}, \overline B , \mathbf M)\rar Z$ be an outcome of Theorem~\ref{main technical thm for complexes} applied to $(\hat X, \hat B , \mathbf M)$.
Write $\phi \colon X \drar \overline{X}$.
By construction, every $\phi \sups -1.$-exceptional divisor is contained in $\overline{B} \sups =1.$.
Let $(\tilde X, \tilde B , \mathbf M)$ be a generalized dlt model of $(\overline{X},\overline{B} , \mathbf{M})$, and set $g \colon \tilde{X} \rar \overline{X}$.
By Theorem~\ref{main technical thm for complexes}, $(\overline{X}, \overline{B} , \mathbf{M})$ satisfies (1), and so does $(\tilde X, \tilde B , \mathbf M)$.
Since $(\tilde X, \tilde B , \mathbf M)$ is a generalized dlt model of $(\overline{X}, \overline{B} , \mathbf{M})$, we have $g \sups -1.(\overline{B} \sups =1.) \subset \tilde B \sups =1.$.
By Remark~\ref{remark about technical thm}, we have the reversed inclusion, and (2) holds.
By construction, $\overline{B} \sups =1.$ fully supports a $\overline{q}$-ample divisor.
As $\Supp(g^\ast \overline{B} \sups =1.)=g \sups -1. (\overline{B} \sups =1.)$, then (3) holds.
Furthermore, any divisor $\tilde E \subset \tilde X$ that does not dominate $Z$ and is not $g$-exceptional intersects $g \sups -1. (\overline{B} \sups =1.)$, as $\overline{B} \sups =1.$ fully supports a $\overline{q}$-ample divisor.
Then, as every $g$-exceptional divisor is in $g \sups -1. (\overline{B} \sups =1.)$, (4) follows.
Finally, by construction, we either used Theorem~\ref{main technical thm for complexes} or generalized dlt models to produce new varieties.
Hence, (5) holds true.
\end{proof}

\begin{remark}
If $\dim \tilde X - \dim Z \geq 2$, then property (4) in the statement of Corollary~\ref{corollary reduction to fibers} can be further strengthened.
In fact, under such assumption, every prime divisor $\tilde E \subset \tilde X$ has non-empty intersection with $g \sups -1.(\overline{B}\sups =1.)$.
\\
On the other hand, if $\dim Z = \dim \tilde X -1$, this stronger statement could fail.
Indeed, to construct an example showing this cannot be achieved, it suffices to fix an elliptic curve $E$ and define $Z\coloneqq E$, $\tilde X \coloneqq \pr 1. \times E$, $M'=\tilde M =0$, and $\tilde B$ to be the union of the two disjoint sections of $\tilde X \rar Z$.
This example is an admissible case of Corollary~\ref{corollary reduction to fibers}, and $\tilde B$ has empty intersection with other sections of $\tilde X \rar Z$.
\end{remark}

Theorem~\ref{main technical thm for complexes} and Corollary~\ref{corollary reduction to fibers} make it possible to reduce the study of the dual complex of a generalized pair $(X,B,\mathbf{M})$ to the study of the dual complex of lower-dimensional pairs via the construction of suitable fibrations on the variety $X$.
The next result furthers this strategy by establishing a comparison between the dual complex of a variety and the dual complex of the generic fiber of a morphism.

\begin{lemma}
\cite{KX16}*{Lemma~28-30}
\label{lemma KX}
Let $E = \bigcup \subs i \in I. E_i$ be a simple normal crossing variety over a field $k$. 
\begin{enumerate}
\item[(1)] Let $K/k$ be a Galois extension with Galois group $G$. 
Then $G$ acts on $\mathcal{D}(E_K)$ and $\mathcal{D}(E_k)=\mathcal{D}(E_K)/G$.
\end{enumerate}
Let $q \colon E \rar Z$ be a morphism.
\begin{enumerate}
\item[(2)] The generic fiber $E \subs k(Z).$ is a simple normal crossing variety over the function field $k(Z)$ and $\mathcal{D}(E \subs k(Z).)$ is a subcomplex of $\mathcal{D}(E)$.
Furthermore, if every stratum dominates $Z$, then $\mathcal{D}(E \subs k(Z).)=\mathcal{D}(E)$.
\item[(3)] Assume that every stratum of $E$ dominates $Z$.
Let $z \in Z$ be a general point and $E_z$ the fiber over $z$.
Then $E_z$ is a simple normal crossing variety and there is a finite group $G$ acting on $\mathcal{D}(E_z)$ such that $\mathcal{D}(E)= \mathcal{D}(E_z)/G$. 
\end{enumerate}
\end{lemma}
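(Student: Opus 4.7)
The plan is to prove the three statements in sequence, with (3) being essentially a combination of (1) and (2) applied to the function field $k(Z)$ together with a monodromy argument. All three are essentially structural statements about how the combinatorics of strata transform under base change, restriction to a generic point, and specialization to a general closed fiber.

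For (1), the key observation is that base-changing along the Galois extension $K/k$ preserves the simple normal crossing structure, since $E_K \to E$ is étale. I would first describe the action of $G$ on the set of irreducible components of $E_K$: each component $E_i$ of $E$ splits in $E_K$ as a finite disjoint union of irreducible components forming a single $G$-orbit (corresponding to the fiber of $\mathrm{Spec}\, K \to \mathrm{Spec}\, k$ over the field of definition of $E_i$'s generic point). More generally, the strata of $E_K$ lying over a given stratum $W$ of $E$ form a single $G$-orbit, and the incidence relations (i.e.\ the $\Delta$-complex structure) are preserved under this identification. Taking the quotient of the resulting $G$-equivariant CW-complex $\mathcal{D}(E_K)$ yields $\mathcal{D}(E)$.

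For (2), I would observe that $E_{k(Z)} = E \times_Z \mathrm{Spec}\, k(Z)$ is simple normal crossing over $k(Z)$ by flat base change. A stratum $W$ of $E$ contributes a stratum to $E_{k(Z)}$ precisely when its generic fiber over $Z$ is non-empty, i.e.\ when $W$ dominates $Z$. The incidences between such strata are preserved by base change to $k(Z)$, so $\mathcal{D}(E_{k(Z)})$ is naturally a subcomplex of $\mathcal{D}(E)$, which equals $\mathcal{D}(E)$ when every stratum dominates $Z$.

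For (3), I would combine the previous two statements. Under the assumption that every stratum of $E$ dominates $Z$, each irreducible stratum $W \subset E$ is generically étale over $Z$ along a suitable open subset of $Z$; by restricting to a common Zariski-open $U \subseteq Z$ one may assume $W \cap E_U \to U$ is étale for every stratum. The function-field extension $k(W)/k(Z)$ is finite and separable; letting $K/k(Z)$ denote a common Galois closure (large enough to split all strata), with Galois group $G$, part (1) applied to $E_{k(Z)}$ gives $\mathcal{D}(E_{k(Z)}) = \mathcal{D}(E_K)/G$, and part (2) gives $\mathcal{D}(E) = \mathcal{D}(E_{k(Z)})$. After shrinking $U$, the general fiber $E_z$ identifies, via the étale covers above, with the base change $E_K$ of the generic fiber, so $\mathcal{D}(E_z) \cong \mathcal{D}(E_K)$ and $\mathcal{D}(E) = \mathcal{D}(E_z)/G$.

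The main obstacle is the monodromy bookkeeping in (3): one has to show that a single finite group $G$ acts coherently on the connected components of the fibers of \emph{all} strata simultaneously, and that this action respects the incidence relations defining the $\Delta$-complex. This is achieved by taking the Galois closure of the compositum of the fraction fields of the finite étale covers produced by each stratum; because there are only finitely many strata, this compositum is itself finite over $k(Z)$, so $G$ is finite. Equivariance of the incidence data then follows from the naturality of the normalization and intersection operations under étale base change.
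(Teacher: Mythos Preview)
The paper does not give its own proof of this lemma; it is stated with a citation to \cite{KX16}*{Lemmas~28--30} and used as a black box. So there is no in-paper argument to compare against, only the original Koll\'ar--Xu proofs.

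That said, your outline is essentially the standard argument and matches what one finds in \cite{KX16}. A few small points deserve more care if you want this to stand alone. In~(1), you should say explicitly that the $G$-action on $\mathcal D(E_K)$ is cellular (it permutes cells and respects face maps), so that the quotient CW-complex is well defined and coincides with $\mathcal D(E)$; this is where the $\Delta$-complex structure, rather than just the underlying set of strata, is used. In~(2), note that an irreducible stratum $W$ dominating $Z$ has irreducible generic fiber $W_{k(Z)}$ simply because localization of an integral scheme is integral; you don't need flatness for this. In~(3), the sentence ``the general fiber $E_z$ identifies \ldots\ with the base change $E_K$'' is compressed: what you are really using is that, after spreading out the Galois cover $\Spec K \to \Spec k(Z)$ to a finite \'etale $U' \to U$ and shrinking, every stratum of $E \times_U U'$ has geometrically irreducible fibers over $U'$, so for any closed $z' \in U'$ over $z$ one has $\mathcal D((E_{U'})_{z'}) = \mathcal D(E_{U'})$; since $k$ is algebraically closed the fiber over $z'$ is canonically $E_z$, giving $\mathcal D(E_z)\cong\mathcal D(E_K)$. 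With those clarifications your proof is complete and in line with the cited source.
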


\section{The dual complex of generalized log Calabi--Yau pairs} \label{sect.dual.cplx}

In this section, we collect some technical and partial results that will be used to prove our main theorem on the structure of the dual complex for generalized log Calabi--Yau pairs.

\subsection{Dual complex and MRC fibration}
\label{dual.MRC.sect}

Firstly, we aim to show that, to study the dual complex of generalized log canonical pairs of log Calabi--Yau type, it suffices to consider the case when the underlying variety is rationally connected.

\begin{proposition} \label{prop mrc}
Let $(X, B, \mathbf{M})$ be a generalized dlt pair. 
Assume that $\K X. + B + \mathbf{M}_X \sim \subs \qq. 0$. 
Let $g \colon X \drar Z$ be a dominant rational map to a non-uniruled variety $Z$.
Assume that there exists a non-empty open set of the base $U \subset Z$ such that the restriction of $g$ to $g^{-1}(U)$ is a morphism with connected fibers over $U$.
Then, every irreducible component $D$ of $B \sups =1.$ dominates $Z$.
\end{proposition}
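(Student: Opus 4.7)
The plan is to transfer the generalized Calabi--Yau structure from $X$ down to $Z$ via the generalized canonical bundle formula, and then derive a contradiction from the non-uniruledness of $Z$ using the Boucksom--Demailly--P\u{a}un--Peternell (BDPP) theorem.

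First I reduce to the case where $g$ is a contraction with $Z$ smooth. Taking a log resolution $\sigma \colon \widetilde X \to X$ of $(X,B)$ on which $\mathbf M$ descends and which resolves the indeterminacy of $g$, and passing to the crepant pullback $(\widetilde X, \widetilde B, \mathbf M)$, the strict transforms of the components of $B^{=1}$ retain coefficient $1$, and $\widetilde g := g \circ \sigma$ is a morphism with the same generic connectedness properties. After Stein factorization of $\widetilde g$ (which replaces $Z$ by a finite cover, still non-uniruled since uniruledness propagates along dominant maps) and a resolution of $Z$ (non-uniruledness is a birational invariant), I may assume $\widetilde g$ is a contraction and $Z$ is smooth.

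Suppose for contradiction that some component $D \subset B^{=1}$ does not dominate $Z$. By a sequence of blowups of $\widetilde g(\widetilde D)$ in $Z$ and resolutions of the resulting rational maps, I reduce to the situation where $\widetilde g(\widetilde D) = P$ is a prime divisor in $Z$, with $Z$ still non-uniruled and the strict transform of $\widetilde D$ still a generalized log canonical place of coefficient $1$. The generalized canonical bundle formula (Theorem~\ref{generalized canonical bundle formula}) applied to $\widetilde g \colon (\widetilde X, \widetilde B, \mathbf M) \to Z$ then produces a generalized lc pair $(Z, B_Z, \mathbf N)$ with $B_Z \geq 0$, $\mathbf N$ b-nef, and $K_Z + B_Z + \mathbf N_Z \sim_{\mathbb Q} 0$. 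Since $\widetilde D \subset \widetilde g^{*} P$ already has coefficient $1$ in $\widetilde B$, the generalized log canonical threshold of $\widetilde g^{*} P$ at the generic point of $P$ is at most $0$, and the generalized lc condition on $(Z,B_Z,\mathbf N)$ forces $P$ to appear in $B_Z$ with coefficient exactly $1$; in particular $B_Z \neq 0$.

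By BDPP, applied to a smooth resolution of $Z$ and pushed forward, $K_Z$ is pseudo-effective on $Z$. On the other hand, $\mathbf N_Z$ is pseudo-effective as the pushforward of a nef class from a higher model, and $B_Z \geq 0$ is pseudo-effective, so $-K_Z \sim_{\mathbb Q} B_Z + \mathbf N_Z$ is pseudo-effective. Strict convexity of the pseudo-effective cone forces $K_Z \equiv 0$, hence $B_Z + \mathbf N_Z \equiv 0$; intersecting with $H^{n-1}$ for $H$ ample on $Z$ yields $B_Z \cdot H^{n-1} = 0$, and the effectivity of $B_Z$ then gives $B_Z = 0$, contradicting $B_Z \neq 0$. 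The hard part will be the reduction to the case where $\widetilde g(\widetilde D)$ is a prime divisor: one must iterate blowups and resolutions while preserving both the non-uniruledness of the base and the coefficient-$1$ status of the log canonical place tracked by $D$, and keep the generalized pair structure under control throughout.
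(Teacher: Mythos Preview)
Your global strategy---push the generalized Calabi--Yau structure down to $Z$ via the canonical bundle formula and derive a contradiction from non-uniruledness---is the same as the paper's. The critical gap is your claim that the canonical bundle formula produces $(Z,B_Z,\mathbf N)$ with $B_Z \geq 0$. Once you pass to a log resolution $\sigma \colon \widetilde X \to X$ (or any higher model) to resolve the indeterminacy of $g$, the crepant pullback $\widetilde B$ is only a \emph{sub}-boundary: exceptional divisors of $\sigma$ with generalized log discrepancy $>1$ acquire negative coefficients. Over a prime divisor $Q \subset Z$ whose preimage in $\widetilde X$ is entirely $\sigma$-exceptional (for instance, $Q$ lying over the indeterminacy locus of $g$, or over exceptional loci introduced when you further blow up $Z$ to make $\widetilde g(\widetilde D)$ divisorial), the generalized lc threshold $l_Q$ of $\widetilde g^\ast Q$ can exceed $1$, so $\mu_Q(B_Z)=1-l_Q<0$. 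Then $B_Z$ is not pseudo-effective, and the chain $K_Z \equiv 0 \Rightarrow B_Z + \mathbf N_Z \equiv 0 \Rightarrow B_Z = 0$ breaks down.

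The paper confronts exactly this issue and does \emph{not} try to make $B_Z$ globally effective. Instead it chooses a general complete intersection curve $C$ on the original $X$ (where $B \geq 0$), identifies $C$ with its strict transform on the graph model $X''$, and proves (Claim~1) that the image curve $h(C) \subset Z$ is disjoint from every prime divisor $Q$ with $\mu_Q(B_Z)<0$: any such $Q$ would force every component of $h^\ast Q$ dominating $Q$ to have negative coefficient in $B''$, hence to be $\pi$-exceptional, hence to miss $C$. Then one gets the numerical contradiction
\[
0 < B_Z^{\geq 0}\cdot h(C) = B_Z \cdot h(C) \leq (K_Z + B_Z + \mathbf N_Z)\cdot h(C)=0,
\]
using Miyaoka--Mori for $K_Z \cdot h(C) \geq 0$ (your BDPP argument for pseudo-effectivity of $K_Z$ would work equally well here). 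So the missing idea is precisely this curve-avoidance argument; without it, or an alternative mechanism to neutralise the negative part of $B_Z$, the proof does not close.

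A minor point: ``strict convexity of the pseudo-effective cone'' is not the right phrase---the cone is rarely strictly convex---but what you need and use is only that it contains no line, i.e.\ $D$ and $-D$ both pseudo-effective forces $D\equiv 0$, which is correct.
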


\begin{proof}
We follow the strategy of the proof of~\cite{KX16}*{Proposition~19}.
\newline
Let us assume by contradiction that some component $D$ of $B \sups =1.$ does not dominate $Z$.
Up to replacing $Z$ birationally, we can assume that $Z$ is smooth and projective and that every prime divisor in $X$ either dominates $Z$ or dominates a prime divisor in $Z$.
In particular, $D$ dominates a prime divisor $P \subset Z$.
Let $X''$ be the normalization of the closure of the graph of $g$, and let $(X'',B''+M'')$ denote the log pull-back of $(X, B, \mathbf M)$ on $X''$.
Under these assumptions, the divisor $B''$ may be non-effective.
Let $h \colon X'' \rar Z$ and $\pi \colon X'' \rar X$ denote the induced morphisms.
Let $C \subset X$ be a sufficiently general complete intersection curve.
Thus, $D \cdot C > 0$ and we may assume that $C$ avoids any prescribed set of codimension at least 2.
In particular, we may assume that $C$ avoids the indeterminacy locus of $g$ and the exceptional locus of $\pi$, and we may identify $C$ with its strict transform in $X''$.
\\

{\bf Claim 1.}
{\it For every prime divisor $R \subset Z$ with $R \cap h(C) \neq \emptyset$, $\Supp(h^\ast (R))$ contains a prime divisor that is not exceptional for $X'' \rar X$.
In particular, $\Supp(h^\ast (R))$ contains a divisor that has non-negative coefficient in $B''$ and that dominates $R$.
}
\begin{proof}
Fix a prime divisor $R$ so that $R \cap h(C) \neq \emptyset$.
Since $Z$ is smooth, $R$ is Cartier, so $h^\ast(R)$ is well defined and $\Supp(h^\ast (R))$ is purely divisorial.
Since $R \cap h(C) \neq \emptyset$, there is a prime divisor $O \subset \Supp(h^\ast (R))$ so that $O \cap C \neq \emptyset$.
Since $X'' \rar X$ is an isomorphism along $C$, it is an isomorphism along the generic point of $O$.
In particular, $O$ is the strict transform of a prime divisor on $X$.
Since $B \geq 0$, it follows that $B''$ has non-negative coefficient along $O$.
Finally, by assumption on $Z$, every prime divisor on $X$ either dominates $Z$, or it dominates a prime divisor in $Z$.
Thus, $O$ has to dominate $R$.
\end{proof}

\noindent Since $X''$ and $Z$ are normal and generically $h$ has connected fibers, it follows that $h$ has connected fibers everywhere.
By construction, we have $\K X''. + B'' + M'' \sim_\qq 0$.
Thus, we can apply Theorem~\ref{generalized canonical bundle formula} to $h$, and induce a generalized sub-pair $(Z,B_Z,\mathbf N)$ such that
\begin{align*}
\K X''. + B'' + M '' \sim_\qq h^\ast (\K Z. + B_Z + \mathbf N _Z).
\end{align*}
Moreover, $\K Z. \cdot h(C) \geq 0$, by~\cite{MM86}*{Corollary~3} since $C$ is a complete intersection of ample divisors, $g$ is dominant, and $Z$ is not uniruled.
In addition, as $\mathbf N _Z$ is pseudo-effective, $\mathbf N _Z \cdot h(C) \geq 0$.
By the generalized canonical bundle formula, as $D$ dominates $P$ and $\mu_D(B)=1$, then $\mu_P (B_Z) > 0$.
Furthermore, if $\mu_Q (B_Z)<0$ for some prime divisor $Q \subset Z$, it follows that for every prime divisor $\Gamma \subset X''$ that dominates $Q$ we have $\mu_\Gamma (B'') < 0$.
Thus Claim 1 implies that $h(C)$ is disjoint from any prime divisor $Q$ with $\mu_Q (B_Z)<0$.
As $D \cdot C > 0$, then $P \cdot h(C)>0$.
This implies that
\begin{align}
\label{eqn.inters.C}
    0 < B_Z^{\geq 0} \cdot h(C) = B_Z \cdot h(C) \leq (\K Z. + B_Z + \mathbf N _Z) \cdot h(C) = 0 \cdot h(C) =0,
\end{align}
which leads to the required contradiction.
\end{proof}

Proposition~\ref{prop mrc} allows a first interesting reduction in the study of the dual complex of a generalized pair of log Calabi--Yau type.

Any normal and proper variety $X$ admits a birational contraction called a \emph{maximal rationally chain connected} (in short, mrcc) fibration $g \colon X \drar Z$, see~\cite{Kol96}*{Theorem~IV.5.2} for the details of the construction. 
Roughly speaking, the mrcc fibration is characterized by the following properties:
\begin{itemize}
    \item the fibers of $g$ are rationally chain connected; and
    \item almost every rational curve is contained in a fiber of $g$: namely, for a very general $z \in Z$, any rational curve intersecting $X_z$ is contained in $X_z$.
\end{itemize}
The mrcc fibration is uniquely determined up to birational equivalence.
By~\cite{Kol96}*{Theorem~IV.4.17}, the birational contraction $g$ is a well-defined morphism with connected fibers over a non-empty open set of the base $Z$.
If the variety $X$ is smooth, then the general fiber of $g$ will be rationally connected; 
in this case, the morphism $g$ is called the \emph{maximal rationally connected} (mrc) fibration.
Work of Hacon and M\textsuperscript{c}Kernan,~\cite{HM07}, shows that the same actually holds in the case of dlt pairs.
Therefore, for our purposes, we can always consider the mrc fibration of a dlt model of a generalized log canonical pair.

By work of Graber, Harris, and Starr, cf.~\cite{GHS03}*{Corollary~1.4}, the base $Z$ of an mrc fibration is not uniruled.
Therefore, Proposition~\ref{prop mrc} applies to the mrc fibration of a generalized dlt pair $(X,B, \mathbf{M})$ with $\K X. + B + M \sim_\qq 0$.
In particular, by Lemma~\ref{lemma KX}, we can reduce the study of the dual complex to the case when $X$ is a rationally connected variety.

\subsection{Reduction to the ample case} \label{red.ample.sect}
Let $(X, B, \mathbf{M})$ be a generalized log canonical pair.
Assume that $\K X. + B + M \sim_\qq 0$.
We are interested in studying $\mathcal{DMR}(X,B, \mathbf M)$.
By the results of~\S~\ref{dual.MRC.sect}, we can assume that $X$ is a rationally connected $\qq$-factorial klt variety.

As $\K X. + B +M \sim_\qq 0$, we aim to use vanishing theorems, cf. Theorem~\ref{kvv}, to show that $H^i(X,\O X.)=0$, for $i > 0$;
the vanishing of the higher cohomology of the structure sheaf is known to imply the vanishing of the cohomology of the dual complex, see~\cite{KX16}*{\S~4}.
On the other hand, since we are interested in studying $\mathcal{DMR}(X,B, \mathbf M)$, we can assume that the generalized pair $(X,B+M)$ is not generalized klt.
In general, the vanishing theorems are not expected to hold for purely lc pairs, without imposing some conditions on the positivity of $B+M$.
Nonetheless, in Corollary~\ref{corollary reduction to fibers} we showed that we can assume that $X$ is endowed with a fibration $X \to Z$ such that $B$ fully supports an effective divisor which is big and semi-ample over $Z$.

Let $\eta$ be the generic point of $Z$. 
It follows from Lemma~\ref{lemma KX} that 
\begin{align*}
\mathcal{DMR}(X,B, \mathbf M) \simeq \mathcal{DMR}(\tilde X _\eta,\tilde B _\eta , \mathbf M | \subs \tilde X _\eta.).
\end{align*}
Furthermore, Lemma~\ref{lemma KX} implies the existence of a finite group $G$ such that
\begin{align*}
\mathcal{DMR}(X,B, \mathbf M) \simeq \mathcal{DMR}(\tilde X _z, \tilde B _z , \mathbf M | \subs \tilde X _z.)/G,
\end{align*}
for a general closed point $z \in Z$.
As we are interested in showing that the identity $H^i(\mathcal{DMR}(X,B, \mathbf M),\qq)=0$ holds, for $i >0$, it suffices to show that the corresponding identity $H^i(\mathcal{DMR}(\tilde X _z, \tilde B _z , \mathbf M | \subs \tilde X _z.),\qq)=0$ holds, for $i >0$.

Therefore, invoking Corollary~\ref{corollary reduction to fibers}, we can assume that $(X,B+M)$ is $\qq$-factorial generalized dlt, and that $B \sups =1.$ fully supports a big and semi-ample effective $\mathbb{Q}$-divisor $H$ with $B \geq H$.
Then, the generalized pair $(X,B-H,\mathbf{M})$ is generalized klt.
Hence,
\begin{align*}
0 \sim_\qq \K X. + (B-H) + H + M
\end{align*}
and $(X, B-H, \mathbf{M} + \overline{H})$ is generalized klt.
Theorem~\ref{kvv} then implies that 
\begin{align*}
H^i(X,\O X.) = 0, \ {\rm for } \ i > 0.
\end{align*}

\subsection{Reduction to the classical case}
In this subsection, we show that if the dual complex of a generalized log Calabi--Yau pair $(X, B, \mathbf{M})$ is not collapsible to a point, then we can reduce the study of $H^i(\mathcal{DMR}(X,B, \mathbf M),\qq)$ to the study of the cohomology of the dual complex of a log pair $(\tilde X, \tilde B)$ with $K_{\tilde X}+ \tilde B \sim_{\mathbb{Q}} 0$; the latter case was studied in detail in~\cite{KX16}.

We will use the notation introduced in~\S~\ref{red.ample.sect}.

\begin{corollary} \label{corollary reduce to pairs}
Let $(X, B, \mathbf{M})$ be a generalized log canonical pair. Assume that $\K X. + B + M \sim \subs \qq. 0$.
Let $(\tilde X, \tilde B , \mathbf M)$ be the generalized pair constructed in Corollary~\ref{corollary reduction to fibers} together with the morphism $\tilde q \colon \tilde X \rar Z$ whose existence is claimed in the corollary.
Let $\eta$ be the generic point of $Z$.
Assume that $\mathcal{DMR}(X,B, \mathbf M)$ is not collapsible to a point.
Then, $\lfloor \tilde{B}_\eta \rfloor= \tilde B _\eta$ and $\mathbf M | \subs \tilde X _\eta. \equiv 0$.
\end{corollary}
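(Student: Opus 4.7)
The plan is to argue the contrapositive: assuming that either $\tilde B_\eta^{<1}\neq 0$ or $\mathbf{M}|_{\tilde X_\eta}\not\equiv 0$, I will show that $\mathcal{DMR}(X,B,\mathbf{M})$ is collapsible to a point, contradicting the hypothesis.

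The first step is to reduce to the generic fiber. By Corollary~\ref{corollary reduction to fibers}(1) every generalized log canonical center of $(\tilde X,\tilde B,\mathbf M)$ dominates $Z$, so every stratum of $\tilde B^{=1}$ dominates $Z$ (since $(\tilde X,\tilde B,\mathbf M)$ is generalized dlt, its generalized log canonical centers are precisely the strata of $\tilde B^{=1}$). Because $\mathcal{DMR}$ is a crepant birational invariant of generalized log canonical pairs and $\psi$ is crepant, $\mathcal{DMR}(X,B,\mathbf{M})\simeq\mathcal{DMR}(\tilde X,\tilde B,\mathbf M)$; then Lemma~\ref{lemma KX}(2)--(3) yields
\[
\mathcal{DMR}(X,B,\mathbf{M})\;\simeq\;\mathcal{DMR}(\tilde X_\eta,\tilde B_\eta,\mathbf{M}|_{\tilde X_\eta})\;\simeq\;\mathcal{DMR}(\tilde X_z,\tilde B_z,\mathbf{M}|_{\tilde X_z})/G
\]
for a general closed $z\in Z$ and a finite group $G$ acting by simplicial automorphisms of a regular $\Delta$-complex. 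Collapsibility is inherited by such quotients, so it suffices to prove that $\mathcal{DMR}(\tilde X_z,\tilde B_z,\mathbf{M}|_{\tilde X_z})$ is collapsible.

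The second step is a perturbation exploiting the $\tilde q$-big and $\tilde q$-semi-ample divisor $H$ fully supported on $\tilde B^{=1}$ produced by Corollary~\ref{corollary reduction to fibers}(3). Writing its restriction as $H_z = \sum_i h_i D_{i,z}$ with $h_i>0$ and $D_{i,z}$ the prime components of $\tilde B_z^{=1}$, for every sufficiently small rational $\delta>0$ the generalized pair
\[
\bigl(\tilde X_z,\;\tilde B_z-\delta H_z,\;\mathbf{M}|_{\tilde X_z}+\delta\,\overline{H_z}\bigr)
\]
is generalized klt with $\mathbb{Q}$-linearly trivial generalized log canonical class, the transformation having merely transferred $\delta H_z$ from the boundary to the b-part. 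Theorem~\ref{kvv} then yields $H^i(\tilde X_z,\mathcal{O}_{\tilde X_z})=0$ for $i>0$, which is the analytic counterpart to collapsibility of the dual complex.

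The third and crucial step, which I expect to be the main obstacle, is to convert the existence of this klt perturbation into an explicit simple homotopy collapse of $\mathcal{DMR}(\tilde X_z,\tilde B_z,\mathbf{M}|_{\tilde X_z})$. Under the contrapositive hypothesis, the residual positivity of $\tilde B_z^{<1}+\mathbf{M}|_{\tilde X_z}$ that is not absorbed by making the perturbation klt, combined with the bigness and semi-ampleness of $H_z$ on $\tilde B_z^{=1}$, produces an effective $\mathbb{Q}$-divisor $E\sim_\mathbb{Q}-(K_{\tilde X_z}+\tilde B_z^{=1})$ whose support does not coincide with $\tilde B_z^{=1}$. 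Following the strategy of~\cite{KX16} adapted to the generalized setting through the MMP with scaling from~\cite{BZ16}*{Lemma~4.4}, I would run a $(K_{\tilde X_z}+\tilde B_z^{=1})$-MMP steered by $E$: each divisorial step is extremal for $\tilde B_z^{=1}$ and realizes an elementary simple homotopy collapse on $\mathcal{D}(\tilde B_z^{=1})$, while every flip leaves the dual complex unchanged up to PL-homeomorphism; termination is forced because the number of irreducible components of $\tilde B_z^{=1}$ strictly decreases along the MMP. Iterating, one collapses the dual complex to a point. The technical heart of the argument is to verify that the generalized dlt structure is preserved at every stage and that $\mathbf{M}|_{\tilde X_z}$ remains nef and trivial along every contracted extremal ray, which is controlled precisely by the horizontality over $Z$ of all generalized log canonical centers built into Corollary~\ref{corollary reduction to fibers}(1).
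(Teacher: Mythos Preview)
Your overall shape (argue the contrapositive, then run a $(K+\tilde B^{=1})$-MMP) matches the paper, but Step~2 is a detour and Step~3 contains a genuine gap.

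Step~2 does not buy you collapsibility. The Kawamata--Viehweg argument you sketch only yields $H^i(\tilde X_z,\mathcal O_{\tilde X_z})=0$, which via \cite{KX16}*{\S 4} gives vanishing of the \emph{rational cohomology} of the dual complex, not a simple-homotopy collapse. The paper does not use Theorem~\ref{kvv} in this proof at all; you can delete Step~2.

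In Step~3 the termination claim is incorrect. It is not true that the number of irreducible components of $\tilde B_z^{=1}$ strictly decreases at every step of a $(K_{\tilde X_z}+\tilde B_z^{=1})$-MMP: flips change nothing divisorially, and a divisorial contraction may well contract a prime divisor lying outside $\Supp(\tilde B_z^{=1})$. Likewise, the assertion that each divisorial step ``realizes an elementary simple homotopy collapse'' is not a general fact about MMP steps and would itself require the machinery of \cite{dFKX}*{Theorem~19} plus a careful analysis of which component is contracted. Finally, you do not actually need an effective $E\sim_\qq -(K_{\tilde X_z}+\tilde B_z^{=1})$; the moduli part need not be effective. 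What you have is that $-(K_{\tilde X_z}+\tilde B_z^{=1})\sim_\qq \tilde B_z^{<1}+\mathbf M|_{\tilde X_z}$ is nef and, under the contrapositive hypothesis, not numerically trivial, so $K_{\tilde X_z}+\tilde B_z^{=1}$ is not pseudo-effective. That is all you need to run the MMP with scaling to a Mori fiber space.

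The paper's argument is shorter and avoids all of this. It works relatively over $Z$ rather than restricting to a fiber: since $\tilde B_\eta^{=1}\not\equiv \tilde B_\eta+\tilde M_\eta$, the divisor $K_{\tilde X}+\tilde B^{=1}$ is not pseudo-effective over $Z$, so a $(K_{\tilde X}+\tilde B^{=1})$-MMP over $Z$ terminates with a Fano contraction $p\colon \hat X\to Y$. The pair $(\hat X,\hat B^{=1})$ is dlt (this is an ordinary pair, the MMP having been run for the ordinary log canonical divisor), and because $\tilde B^{=1}$ fully supports a $\tilde q$-big divisor by Corollary~\ref{corollary reduction to fibers}(3), its strict transform $\hat B^{=1}$ dominates $Y$. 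Now \cite{KX16}*{Proposition~24} gives directly that $\mathcal D(\hat B^{=1})$ is collapsible. To connect back, take a generalized dlt model $(\check X,\check B,\mathbf M)$ of $(\hat X,\hat B,\mathbf M)$; one checks $\check B^{=1}=h^{-1}(\hat B^{=1})$, and then \cite{dFKX}*{Theorem~3} shows $\mathcal{DMR}(\check X,\check B,\mathbf M)$ collapses onto $\mathcal D(\hat B^{=1})$. No step-by-step bookkeeping of the dual complex along the MMP is required.
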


\begin{proof}
Set $\tilde M \coloneqq \mathbf M \subs \tilde X.$.
Assume that $\tilde B \sups =1._\eta \not \equiv \tilde B _\eta + \tilde M _\eta$.
Then, $\K \tilde X. + \tilde B \sups =1.$ is not pseudo-effective over $Z$.
Thus, we may run a $(\K \tilde X. + \tilde B \sups =1.)$-MMP with scaling over $Z$, which terminates with a Fano contraction $p \colon \hat X \rar Y$.
The final model $(\hat X, \hat B \sups =1.)$ of this MMP is dlt, where $\hat B$ is the strict transform of $\tilde B$ on $\hat X$.
As $B \sups =1.$ fully supports a divisor that is big and semi-ample over $Z$, then $\hat B \sups =1.$ dominates $Y$.
Therefore, by~\cite{KX16}*{Proposition~24}, $\mathcal{DMR}(\hat X, \hat B \sups =1.)$ is collapsible to a point.
\newline
Let $h \colon (\check X,\check B, \mathbf M) \rar (\hat X, \hat B, \mathbf M)$ be a generalized dlt model of $(\hat X , \hat B, \mathbf M)$.
By Corollary~\ref{corollary reduction to fibers}, $\check B \sups =1.$ fully supports a big and mobile divisor over $Z$, so that, $\check B \sups =1.$ dominates $Z$ and we have $\check B \sups =1. = h \sups -1.(\hat B \sups =1.)$, cf~\cite{KX16}*{\S~22}.
By~\cite{dFKX}*{Theorem~3}, $\mathcal{DMR}(\check X,\check B , \mathbf M)$ collapses to $\mathcal{DMR}(\hat X, \hat B \sups =1.)$.
\end{proof}

\begin{remark} \label{remark reduction}
Using the result from the previous subsection, Corollary~\ref{corollary reduce to pairs} implies that the study of the cohomology of the dual complex of generalized log Calabi--Yau pairs reduces to the case of log Calabi--Yau pairs.
Indeed, if $\mathcal{DMR}(X,B, \mathbf M)$ is collapsible, there is nothing to prove.
Otherwise, we can consider the relation $\mathcal{DMR}(X,B, \mathbf M) \simeq \mathcal{DMR}(\tilde X _z, \tilde B _z , \mathbf M | \subs \tilde X _z.)$ discussed in \S~\ref{red.ample.sect}, where $\tilde X _z$ denotes a general fiber of the morphism $X \to Z$ whose existence is stated in Corollary~\ref{corollary reduction to fibers}.
Then, by Corollary~\ref{corollary reduce to pairs}, we have $\tilde M _z \equiv 0$.
This implies that $(\tilde X _z, \tilde B _z , \mathbf M | \subs \tilde X _z.)$ is a pair.
Therefore, to study $\mathcal{DMR}(\tilde X _z, \tilde B _z , \mathbf  M | \subs \tilde X _z.)$ we can use the results in~\cite{KX16}.
\end{remark}

We conclude this section by showing that, in the setup of this work, the dual complex of a generalized pair is equidimensional. 

\begin{theorem} \label{dual complex equidimensional}
Let $(X, B, \mathbf{M})$ be a generalized log canonical pair.
Assume that $\K X. + B + M \sim \subs \qq. 0$.
Then, $\mathcal{DMR}(X,B, \mathbf M)$ has the same dimension at every point.
\end{theorem}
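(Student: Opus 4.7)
The strategy is to deduce equidimensionality of $\mathcal{DMR}(X,B,\mathbf{M})$ from Theorem~\ref{Theoremp1 link} applied to the trivial structure morphism $X \rar \Spec(\mathbb{C})$. By Remark~\ref{remark dual complex gpair to pair} combined with Theorem~\ref{generalized dlt model}, I would first replace $(X,B,\mathbf{M})$ by a $\qq$-factorial generalized dlt model: this operation preserves the PL-homeomorphism class of the dual complex, and so one may assume throughout that $(X,B,\mathbf{M})$ is $\qq$-factorial and generalized dlt.

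Recall from Definitions~\ref{stratum.def} and~\ref{dual.compl.def} that the cells of $\mathcal{DMR}(X,B,\mathbf{M})$ are indexed by the strata of $B\sups =1.$, equivalently, by the generalized log canonical centers of $(X,B,\mathbf{M})$, and that a cell $C_{F'}$ is a face of $C_{F}$ if and only if $F \subseteq F'$. Under this correspondence, the \emph{maximal} cells of $\mathcal{DMR}(X,B,\mathbf{M})$ correspond precisely to the \emph{minimal} (with respect to inclusion) generalized log canonical centers, and the dimension of a maximal cell $C_F$ equals $\dim X - \dim F - 1$. Since a CW-complex is equidimensional at every point exactly when all of its maximal cells have a common dimension, the theorem reduces to showing that all minimal generalized log canonical centers of $(X,B,\mathbf{M})$ have the same dimension.

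I would then invoke Theorem~\ref{Theoremp1 link} with the structure morphism $f \colon X \rar S \coloneqq \Spec(\mathbb{C})$: the unique point $s \in S$ trivially satisfies $s \in f(Z)$ for every subvariety $Z \subseteq X$, and the fiber $f^{-1}(s)=X$ is connected since $X$ is normal. The conclusion of that theorem is that any two minimal generalized log canonical centers of $(X,B,\mathbf{M})$ are $\pr 1.$-linked to one another. As observed right after Definition~\ref{link.def}, every $\pr 1.$-linking induces a birational map between the linked centers, so in particular any two minimal generalized log canonical centers share the same dimension, which together with the previous paragraph completes the argument.

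No substantive obstacle arises beyond the invocation of Theorem~\ref{Theoremp1 link}: the plan is essentially a translation between the birational geometry of minimal generalized log canonical centers and the combinatorics of the dual complex. The only point requiring a small amount of care is the cell/center dictionary and its compatibility with the notion of ``equidimensional at every point'' for CW-complexes; both of these are formal once the dlt reduction has been performed.
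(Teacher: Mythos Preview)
Your proposal is correct and follows essentially the same approach as the paper: pass to a generalized dlt model, identify cells of the dual complex with strata of $B^{=1}$, and use Theorem~\ref{Theoremp1 link} (over $S=\Spec \cc$) to conclude that all minimal generalized log canonical centers are $\pr 1.$-linked, hence of the same dimension. The paper's proof is simply a terser version of yours, invoking the same reduction and the same key input; your added explanation of the cell/center dictionary and the translation to equidimensionality of maximal cells is exactly the unpacking the paper leaves implicit.
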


\begin{proof}
Let $(\overline{X},\overline{B}, \mathbf{M})$ be a generalized dlt model of $(X,B+M)$.
By Remark~\ref{remark dual complex gpair to pair}, we have $\mathcal{DMR}(X,B, \mathbf M)=\mathcal{DMR}(\overline{X},\overline{B})$.
Furthermore, by~\cite{KM98}*{Theorem~2.44}, we may compute $\mathcal{DMR}(\overline{X},\overline{B})$ directly by $\mathcal{D}(\overline{B}\sups =1.)$.
Theorem~\ref{Theoremp1 link} implies that $\mathcal{D}(\overline{B}\sups =1.)$ gives rise to an equidimensional CW-complex.
\end{proof}

\section{Proof of the theorems}
The following proposition will be used in the proof of Theorem~\ref{main theorem}.

\begin{proposition} \label{key prop}
Let $(X,B,\mathbf{M})/S$ be a generalized pair.
Assume that $X$ is a generalized dlt model for $(X,B,\mathbf M)/S$.
Let $f \colon X \rar S$ be a projective morphism such that $\K X. + B + \mathbf{M}_X \sim \subs \qq,f. 0$. Fix $s \in S$ and assume that $f \sups -1. (s)$ is connected but $f \sups -1. (s) \cap \Nklt(X,B,\mathbf{M})$ is disconnected (as $k(s)$-schemes).
Then, a component of $B \sups \geq 1.$ dominates $S$.
\end{proposition}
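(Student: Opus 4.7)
The plan is to argue by contradiction, assuming that every component of $B^{\geq 1}$ is vertical over $S$. Since $(X,B,\mathbf{M})/S$ is generalized dlt we have $\Nklt(X,B,\mathbf{M})=\Supp(B^{=1})$, so the assumption means that, after shrinking $S$ around $s$, every component of $B^{=1}$ meeting $f^{-1}(s)$ maps into a proper closed subset of $S$ passing through $s$. Replacing $f$ by its Stein factorization reduces us to the case in which $f$ is a contraction; then $f^{-1}(s)$ stays connected on every birational model of $X$ over $S$ by Zariski's connectedness theorem, in particular on any log resolution of $(X,B)$ where $\mathbf M$ descends.

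First I would treat the case $\dim S=1$, where every component of $B^{=1}$ near $f^{-1}(s)$ lies entirely inside $f^{-1}(s)$. Fix a log resolution $\pi\colon\tilde X\to X$ on which $\mathbf M$ descends and let $(\tilde X,\tilde B,\mathbf M)$ denote the induced crepant generalized sub-pair. By Remark~\ref{r.num.comp}, $\Supp(\tilde B^{\geq 1})$ still has at least two connected components $\tilde\Gamma_1$ and $\tilde\Gamma_2$ near $\tilde X_s$, both contained in the connected fiber $\tilde X_s$, so $\tilde X_s$ contains an additional reduced vertical divisor $E$ disjoint from $\Nklt(\tilde X,\tilde B,\mathbf M)$ which bridges the two $\tilde\Gamma_i$. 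The heart of the argument, modelled on~\cite{FG14}*{proof of Theorem~1.1}, is to run a relative MMP over $S$ for $K_{\tilde X}+\tilde B^{<1}+(1-\epsilon)\tilde B^{=1}+\mathbf{M}_{\tilde X}$ with $0<\epsilon\ll 1$; this divisor is $\mathbb Q$-linearly equivalent over $S$ to $-\epsilon\,\tilde B^{=1}$, so each contracted extremal ray is $(\tilde B^{=1})$-positive, and the MMP is available by~\cite{BZ16}*{Lemma~4.4} together with the techniques used in the proof of Theorem~\ref{generalized dlt model}. The output is a model $\overline X\to S$ whose fiber over $s$ consists only of strict transforms $\overline\Gamma_1,\overline\Gamma_2$ of (subsets of) $\tilde\Gamma_1,\tilde\Gamma_2$; since the positivity hypothesis of Lemma~\ref{conn.div.contr.lemma} and Lemma~\ref{flop.nklt.lem} is satisfied at every step, the two connected components of the non-klt locus are preserved throughout, yet the connectedness of $\overline X_s$ forces $\overline\Gamma_1\cap \overline\Gamma_2\neq\emptyset$, which is the desired contradiction.

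For the general case $\dim S\geq 2$, the components of $B^{=1}$ may dominate distinct subvarieties $T_1,T_2\subset S$ passing through $s$ and the previous argument does not apply directly. Here the strategy is to use the generalized canonical bundle formula (Theorem~\ref{generalized canonical bundle formula}) to transfer the problem to the base: applied to the Stein factor of $f$, it endows $S$ with a generalized pair structure $(S,B_S,\mathbf N)/S$ such that the $T_i$ lift to non-klt centers. Passing to a generalized dlt model $\hat S\to S$ of $(S,B_S,\mathbf N)$ and invoking Remark~\ref{r.num.comp}, the lifts of $T_1$ and $T_2$ sit in distinct connected components of the non-klt locus of $\hat S$. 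Lifting the construction to a suitable common modification of $X$ over $\hat S$, one can then run a variant of the one-dimensional MMP argument above relatively over $\hat S$, contracting the vertical divisors that bridge the two components, and reach the same contradiction via Lemmas~\ref{conn.div.contr.lemma}--\ref{flop.nklt.lem}.

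The main obstacle I anticipate is the fine bookkeeping during the MMP: one must guarantee that no entire $\tilde\Gamma_i$ gets contracted at any step, so that the two connected components genuinely survive to the final model. I plan to handle this by working with the MMP with scaling of a carefully chosen ample divisor and exploiting that each step is $\tilde B^{=1}$-positive: the extremal rays selected by the scaling procedure necessarily meet the bridging divisor $E$ rather than the $\tilde\Gamma_i$. A secondary difficulty, already present in the $\dim S\geq 2$ reduction, is that the dlt-ification of $(S,B_S,\mathbf N)$ may change the base in a way that obstructs lifting the boundary from $S$ back to a model of $X$; I expect to sidestep this by instead working with a simultaneous log resolution of $(X,B)$ and of $(\hat S,\hat B_S)$ where $\mathbf{M}$ and $\mathbf N$ both descend, and transporting the linear equivalences through the generalized canonical bundle formula.
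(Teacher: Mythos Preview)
Your overall strategy --- argue by contradiction, pass to Stein factorization, use the generalized canonical bundle formula to put a generalized pair structure on the base, pass to a dlt model of the base, and run an MMP on a high model of $X$ to force the non-klt locus to become connected --- matches the paper's.  However, the specific MMP you propose does not do what you claim, and this is where the argument breaks.

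First a small but relevant point: the hypothesis is that $X$ is a generalized \emph{dlt model} for $(X,B,\bM.)$, which only means $(X,B\wedge\Supp(B),\bM.)$ is generalized dlt.  The pair $(X,B,\bM.)$ itself need not be generalized log canonical, so one has $\Nklt(X,B,\bM.)=\Supp(B^{\geq 1})$, not $\Supp(B^{=1})$ (see Remark~\ref{remark nklt in gdlt model}).  This forces extra bookkeeping (the paper introduces an auxiliary divisor $\Sigma'\leq 0$ on the dlt model of the base precisely to absorb the excess coefficients).

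The real gap is in your $\dim S=1$ step.  You run an MMP for $K_{\tilde X}+\tilde B^{<1}+(1-\epsilon)\tilde B^{=1}+\bM \tilde X.\sim_{\qq,S}-\epsilon\,\tilde B^{=1}$ and assert that the output ``consists only of strict transforms $\overline\Gamma_1,\overline\Gamma_2$''.  But this MMP contracts $\tilde B^{=1}$-\emph{positive} rays, i.e.\ rays meeting $\Gamma_1\cup\Gamma_2$; it gives no mechanism for contracting a component of the bridge $E$ that is disjoint from $\Gamma_1\cup\Gamma_2$.  Your own justification (``the extremal rays \ldots\ necessarily meet the bridging divisor $E$ rather than the $\tilde\Gamma_i$'') has the logic exactly backwards.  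Moreover $\tilde B$ is only a sub-boundary on the log resolution, so $\tilde B^{<1}$ contains negative coefficients and you are not in a position to invoke \cite{BZ16}*{Lemma~4.4} without further work.

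The paper's fix is to change the MMP.  Working on a log resolution $X''$ over the dlt model $S'$ of the base, one isolates the reduced divisor
\[
F''=\sum\bigl\{E''_i : \mu_{E''_i}(B''+(g\circ\pi)^*\Sigma')^{\geq 0}<1 \text{ and } g\circ\pi(E''_i)\subset\Supp(B_{S'}^{\geq 1})\bigr\},
\]
\emph{adds} $\epsilon F''$ to the boundary, and runs the MMP for $K_{X''}+\Delta''+\epsilon F''+\bM X''.$, where $\Delta''=(B''+(g\circ\pi)^*\Sigma')^{\geq 0}$.  Over $S'$ this divisor is $\qq$-linearly equivalent to $G''\coloneqq(B''+(g\circ\pi)^*\Sigma')^{\leq 0}+\epsilon F''$, and the point is that every component of $G''$ is either exceptional over $X'$ or of insufficient fiber type over $S'$; one then uses \cite{Lai11}*{Lemma~2.10} (degenerate divisors are not in the movable cone) together with \cite{Fuj11}*{Theorem~2.3} to show the MMP contracts all of $G''$.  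On the resulting model every prime divisor over $\Supp(B_{S'}^{\geq 1})$ has coefficient $\geq 1$ in the boundary, so the non-klt locus is connected over the unique connected component of $\Supp(B_{S'}^{\geq 1})$ lying over $s$, and Proposition~\ref{p.conn.gen.pairs} applied to a common resolution yields the contradiction.  There is no case-split on $\dim S$; the canonical bundle formula and the base dlt model are used uniformly.
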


\begin{proof}
We divide the proof into steps, for the reader's convenience.
\\

{\bf Step 0}. 
{\it 
In this step, we show that it suffices to show that the proposition holds for the contraction in the Stein factorization of $f$.}
\newline
By assumption, $X$ is $\qq$-factorial, and $(X,B \wedge \Supp(B),\mathbf M)/S$ is generalized dlt.
Let $\phi \colon X \rar Y$ denote the Stein factorization of $f$, and let $\psi \colon Y \rar S$ the induced finite morphism.
Since the fiber of $f$ over $s$ is connected, and $\phi$ has geometrically connected fibers, there exists a unique point $y \in Y$ mapping to $s \in S$.
Furthermore, as $f \sups -1. (s) \cap \Nklt(X,B,\mathbf{M})$ is disconnected, then so is $\phi \sups -1. (y) \cap \Nklt(X,B,\mathbf{M})$;
hence, the hypotheses of the statement apply to the morphism $\phi \colon X \rar Y$ as well.
Furthermore, $B \sups \geq 1.$ dominates $S$ if and only if it dominates $Y$.
\\
Therefore, up to substituting $S$ (resp. $s$, $f$) with $Y$ (resp. $y$, $\phi$), from now on, we shall assume that the morphism $f$ satisfies the additional property $f_\ast \O X. = \O S.$.
\\

{\bf Step 1}. {\it In this step we make further reductions and explain the strategy of proof}.
\newline
Remark~\ref{remark nklt in gdlt model} implies that $\Nklt(X,B,\mathbf M)= {\rm Supp} B \sups \geq 1.$.
Moreover, by~\cite{Kol13}*{4.38}, passing to an \'etale neighborhood of $s \in S$ we may assume that 
\begin{center}
{\bf ($\ast$)} {\it different connected components of $\Nklt(X,B,\mathbf M)\cap f \sups -1. (s)$ are contained\\ 
in different connected components of $\Nklt(X,B,\mathbf M)$}.
\end{center}
To prove the proposition, we shall argue by contradiction: 
namely, we shall assume that no component of $B \sups \geq 1.$ dominates $S$.
Given two distinct connected components $D_1$ and $D_2$ of $B \sups \geq 1.$ intersecting $f \sups -1.(y)$ we will obtain the sought contradiction by using the fact that the components of $B^{\geq 1}$ are vertical over S to show that $D_1$ and $D_2$ actually intersect.
\\

{\bf Step 2}.
{\it In this step we define a generalized pair $(S,B_S + M_S)$ on $S$ using the canonical bundle formula.
We introduce an auxiliary divisor $\Sigma'$ on a higher model $S' \to S$; $\Sigma'$ is only needed to treat the case when $(X,B,\bM.)$ is not generalized log canonical.}
\newline
By~\cite{Fil18}*{Theorem 1.4}, there exists  $(S,B_S + M_S)$ a generalized pair induced by the generalized canonical bundle formula on $S$.
Thus,~\cite{Fil18}*{Proposition 4.16} implies that there is some generalized non-klt center of $(Y, B_S + M_S)$ containing $s$.
Let $\alpha \colon S' \rar S$ be a generalized dlt model for $(S, B_S + M_S)$, and let $(S', B \subs S'. + M \subs S'.)$ denote the trace of $(S,B_S+M_S)$ on $S'$.
Let $X'$ be the normalization of the main component of $X \times_S S'$.
Let $\beta \colon X' \rar X$ and $g \colon X' \rar S'$ denote the induced morphisms.
The assumption that $\Nklt(X, B, \mathbf{M})$ does not dominate $S$ implies that $(X',B', \mathbf{M})$ is generalized klt over the generic point of $S'$, where $(X',B', \mathbf{M})$ is the sub-pair induced by log pull-back on $X'$.
\newline
Let us define the divisor $\Sigma' \coloneqq (B \subs S'. \wedge \Supp(B \subs S'.))-B \subs S'.$ on $S'$. 
Then $\Sigma'$ is the only divisor supported on $\Supp (B \subs S'. \sups \geq 1.)$ such that $B \subs S'. \wedge \Supp(B \subs S'.)$ is the boundary part on $S'$ for the generalized canonical bundle formula applied to  $K_X'+B'+g^\ast\Sigma'+\mathbf M$.
By definition, we have $\Sigma' \leq 0$;
moreover, $\Sigma' = 0$ if $(X,B,\mathbf M)/S$ is generalized log canonical.
By inversion of adjunction for the generalized canonical bundle formula, cf.~\cite{Fil18}*{Proposition 4.16}, for every irreducible component $D'$ of $\Supp(B \subs S'. \sups \geq 1.)$ there is a divisorial valuation $E'$ over $X'$ such that $a \subs E'.(X',B'+g^\ast\Sigma',\mathbf M) = 0$ and $c \subs X'.(E')$ dominates $D'$.
Furthermore, the same results imply that the generalized sub-pair $(X',B'+g^\ast\Sigma', \mathbf M)$ is generalized sub-log canonical.
\\

{\bf Step 3.} {\it Let $\pi \colon X'' \rar X'$ be a log resolution of $(X',\Supp(B') + g^\ast\Supp(B \subs S'.))$ where $\mathbf{M}$ descends.
In this step we define a divisor $F''$ on $X''$, cf.~\eqref{eqn.def.F''}, supported on those components of $B'' +(g \circ \pi)^\ast\Sigma'$ of coefficient in $ (0, 1)$ whose image in $S'$ is contained in $B^{\geq 1} \subs S'.$, and we discuss the properties of $F''$.}
\newline
Let $\pi \colon X'' \rar X'$ be a log resolution of $(X',\Supp(B') + g^\ast\Supp(B \subs S'.))$ where $\mathbf{M}$ descends, and let $B''$ denote the sub-boundary induced on $X''$.
Up to passing to a higher smooth birational model, we may assume that for every prime component $D'$ of $\Supp(B \subs S'. \sups \geq 1.)$ the corresponding divisorial valuation $E'$ over $X'$, defined at the end of the previous step, is extracted.
In particular, for every prime divisor $D' \subset \Supp(B \subs S'. \sups \geq 1.)$ there exists always a component of $B''+(g\circ \pi)^\ast\Sigma'$ of coefficient 1 that dominates $D'$.
Let us define
\begin{align}
\label{eqn.def.F''}
F'' \coloneqq& \sum_{E_i'' \in J} E_i'', \\ 
\nonumber
J \coloneqq \{ E_i'' \subset X'' \ \text{prime divisor} \ \vert \ 
\mu \subs E_i''. ( B'' &+(g \circ \pi)^\ast\Sigma' ) \sups \geq 0. < 1 \ 
\text{and} \ 
g \circ \pi (E''_i) \subset \Supp (B \subs S'. \sups \geq 1.)\}. 
\end{align}
In order to reach the sought contradiction, we will show that it is possible to contract $F''$ by means of suitable runs of the MMP without altering the number of connected components of $\Nklt(X,B,\bM.)$ around $s$, in such a way that on the final model $\Nklt(X,B,\bM.)$ is connected around $s$.
\newline
As observed at the end of the previous step, by the definition of $\Sigma'$ and inversion of adjunction for fiber spaces, the generalized subpair $(X'',B''+(g \circ \pi)^\ast\Sigma',\bM.)$ is generalized sub-log canonical.
Hence,
\begin{align*}
(B'' +(g \circ \pi)^\ast\Sigma' ) \sups \geq 0. = (B'' +(g \circ \pi)^\ast\Sigma' ) \sups \geq 0. \wedge \Supp(( B'' +(g \circ \pi)^\ast\Sigma' ) \sups \geq 0.),
\end{align*}
and any prime divisor $Q'' \subset X''$ such that $\mult \subs Q''. ( B'' +(g \circ \pi)^\ast\Sigma' ) \sups \geq 0.=1$ is mapped into $\Supp(B \subs S'. \sups \geq 1.)$.
For brevity, we define
\begin{align*}
\Delta'' &\coloneqq 
(B'' +(g \circ \pi)^\ast\Sigma' ) \sups \geq 0..
\end{align*}
Fix a rational number $0 < \epsilon \ll 1$.
Then, as $X''$ is a log resolution of $(X',\Supp(B') + g^\ast\Supp(B \subs S'.))$, the pair $(X'',\Delta'' + \epsilon F'')$ is dlt, and
\begin{align}
    \label{eqn.def.G''}
\K X''. + \Delta'' + \epsilon F'' + \mathbf M \subs X''. \sim \subs \qq,g \circ \pi. 
(B''  +(g \circ \pi)^\ast\Sigma')\sups \leq 0. + \epsilon F'' \eqqcolon G''.
\end{align}
By construction, the divisor $G''$ is effective. Moreover, the following claim holds.

\medskip

{\bf Claim 1}. {\it If $\Supp G''$ dominates $S'$, then $G'' \not \in \overline{{\rm Mov}}(X''/X')$, where $\overline{{\rm Mov}}(X''/X')$ is the closure of the cone of relatively movable divisors}.
\begin{proof}[Proof of Claim 1]
Since $B'$ is effective over the generic point of $S'$, as observed in Step 2, and since $F''$ is vertical over $S'$, it follows that $\Supp(G'')$ is $\pi$-exceptional over generic point of $S'$.
Let $U \subset S'$ be a non-empty open subset such that all the vertical components of $\Supp G''$ are mapped into $S' \setminus U$.
Then, we set $X'_U \coloneqq X' \times \subs S'. U$ and $X'' \coloneqq X'' \times \subs S'. U$.
Also, we let $G''_U$ be the pull-back of $G''$ to $X''_U$.
\newline
By definition of relatively movable divisor, if $G'' \in \overline{{\rm Mov}}(X''/X')$, then $G''_U \in \overline{{\rm Mov}}(X''_U/X'_U)$.
Thus, it suffices to show that $G''_U \not \in \overline{{\rm Mov}}(X''_U/X'_U)$.
By construction, $G''_U$ is effective and exceptional for $X''_U \rar X'_U$.
Thus, $G_U''$ is degenerate in the sense of \cite{Lai11}*{Definition 2.9}.
Hence, by \cite{Lai11}*{Lemma 2.10}, it follows that $G''_U \not \in \overline{{\rm Mov}}(X''_U/X'_U)$.
\end{proof}

{\bf Step 4.} 
{\it 
In this step we run a relative $(\K X''. + \Delta'' + \epsilon F'' + \mathbf M \subs X''.)$-MMP over $X'$ and we show that this MMP contracts those components in $F''$ that dominate $S'$.}
\newline
As observed in the proof of Claim 1, any component of $G''$ that dominates $S'$ is $\pi$-exceptional by construction.
If such components exist on $X''$, then we observed in Claim 1 that $\K X''. + \Delta'' + \epsilon F'' + \mathbf M \subs X''.\not \in \overline{{\rm Mov}}(X''/X')$. 
To contract those components, we run a $(\K X''. + \Delta'' + \epsilon F'' + \mathbf M \subs X''.)$-MMP relative to $X'$ with scaling of an ample divisor. 
We can run this MMP as the pair is generalized dlt.
\\
By~\cite{Fuj11}*{Theorem 2.3}, whose proof extends to generalized dlt pairs, after finitely many steps the run of the MMP terminates to yield a model $X'''$ over which $(\K X'''. + \Delta''' + \epsilon F''' + \mathbf M \subs X'''.)\in \overline{{\rm Mov}}(X'''/X')$.
Let $\sigma \colon X''' \rar X'$ and $\rho \coloneqq g\circ \sigma \colon X''' \rar S'$ be the induced morphisms and let $F'''$ (resp. $G'''$) be the strict transform of $F''$ (resp. $G''$) on $X'''$.
By~\eqref{eqn.def.G''}, $G''' \in \overline{{\rm Mov}}(X''/X')$.
Hence, over the generic point of $S'$, $G''$ is supported on divisors that are exceptional for $X'' \rar X'$.
By \cite{Lai11}*{Lemma 2.10}, these divisors need to be contracted for $G'''$ to be limit of movable divisors relatively to $X'$.
Therefore, $G'''$ is vertical over $S'$ as desired.
\\
When $G''$ does not dominate $S'$, we do not need to run any MMP, and in the rest of the proof we have $X'' = X'''$.
\\

{\bf Step 5.} {\it In this step we run $(\K X'''. + \Delta''' + \epsilon F''' + \mathbf M \subs X'''.)$-MMP relative to $S'$ to contract $G'''$.
}
\newline
By~\cite{BZ16}, we can run a $(\K X'''. + \Delta''' + \epsilon F''' + \mathbf M \subs X'''.)$-MMP over $S'$.
We need to show that this MMP terminates.
First, we check that it terminates over a big open set of $S'$.
Let $P'''$ be a prime component of $G'''$.

\medskip

{\bf Claim 2}.
{\it Assume that $\rho(P''')=D'$, for a prime divisor $D' \subset S'$. 
Then $P'''$ is of insufficient fiber type over $S'$, cf.~\cite{Lai11}*{Definition~2.9}}.

\begin{proof}[Proof of Claim 2]
We first assume that $D' \subset \Supp (B \subs S'.\sups \geq 1.)$.
Then, by construction, there is another prime divisor $Q''' \subset X'''$ not contained in $\Supp (G''')$ but mapping to $D'$, see the end of Step 2 or, alternatively, the start of Step 3.
\\
Instead, if $D' \not \subseteq \Supp( B \subs S'. \sups \geq 1.)$, then $D'$ is not $\alpha$-exceptional, by construction of $\alpha \colon S' \to S$, cf. Step 2.
Furthermore, $B'' = B'' + (g \circ \pi)^\ast\Sigma'$ over the generic point of $D'$.
Since our claim can be verified over the generic point of $D'$, we can assume $\Sigma'=0$ in the rest of the proof of this claim.
Similarly, as $F''$ is mapped into $\Supp( B \subs S'. \sups \geq 1.)$, we can assume $F''=0$ for the purpose of this claim.
Thus, by these two observations, it follows that $P'''$ is a component of $(B''')\sups \leq 0.$.
Now, since $D'$ is not $\alpha$-exceptional, $S' \rar S$ is an isomorphism at the generic point of $D'$.
Thus, by definition of $X'$, $X' \rar X$ is an isomorphism over the generic point of $D'$.
For this reason, as $B \geq 0$, it follows that $B' \geq 0$ over the generic point of $D'$.
Since $P'''$ is a component of $(B''')\sups \leq 0.$, it follows that $P'''$ is exceptional for $X''' \rar X'$.
In particular, as $P'''$ is a component of $\sigma^*(g^* D')$, there exists a prime divisor $Q''' \subset X'''$ that is not $\sigma$-exceptional and that dominates $D'$. 
\end{proof}
If $G'''$ has any prime component dominating a prime divisor $D' \subset S'$, then by Claim 2 and~\cite{Lai11}*{Lemma~2.10}, $G''' \not \in  \overline{{\rm Mov}}(X'''/S')$; thus, by~\cite{Fuj11}*{Theorem 2.3} the $(\K X'''. + \Delta''' + \epsilon F''' + \mathbf M \subs X'''.)$-MMP over $S'$ with ample scaling terminates at the generic point of $D'$ and it contracts those components of $G'''$ that are of insufficient fiber type over $D'$.
Hence, after finitely many steps of running the $(\K X'''. + \Delta''' + \epsilon F''' + \mathbf M \subs X'''.)$-MMP relative to $S'$, we reach a model $X'''' \to S'$ such that no component of $G''''$ dominates a divisor in $S'$, where $G''''$ is the strict transform of $G'''$ on $X''''$.
If $G'''' = 0$, we stop.
Otherwise, as the image of $G'''$ in $S'$ has codimension $\geq 2$,   by~\cite{Lai11}*{Lemma~2.10}, there is a component $P'''' \subset \Supp(G'''')$ such that $P'''' \subset \mathbf{B}_-(G''''/S')$.
Therefore, $G''''\not \in \overline{{\rm Mov}}(X''''/S')$ and a further run of the MMP contracts $P''''$.\\

{\bf Step 6.} {\it In this step we list all the properties of the model $\overline{X}$ that we obtain after contracting the divisors in $G'''$
and we reach the sought contradiction to conclude the proof.}
\newline
After finitely many steps, we reach a model $\overline{X}$ with morphism $\tau \colon \overline X \rar S'$ on which $G''''$ has been contracted.
In particular, any prime divisor $\overline P \subset \overline{X}$ such that $\tau(\overline{P}) \subset \Supp ( B \subs S'. \sups \geq 1.)$ satisfies $\mult \subs \overline{P}. \overline{B} \geq 1$.
Indeed, the components of $(B'' + (g \circ \pi)^\ast\Sigma') \sups < 1.$ mapping to $\Supp ( B \subs S'. \sups \geq 1.)$ were supported on $\Supp((F'')\sups \geq 0.)$, which is contracted on the model $\overline{X}$.
Furthermore, the fact that $\Sigma' \leq 0$ guarantees that if $\mu \subs Q''.(B'')<1$ then $\mu \subs Q''.(B'' + (g \circ \pi)^\ast\Sigma')<1$.
Because of this and the fact that the support of $(B'' + (g \circ \pi)^\ast\Sigma')<0$ has been contracted on $\overline X$, as that was in the support of $G''$, cf.~\ref{eqn.def.G''}, it follows that $\overline{B}\geq 0$, where $\overline B$ is the strict transform of $B''$ on $\overline{X}$.
By Proposition~\ref{p.conn.gen.pairs} and Remark~\ref{remark nklt in gdlt model}, there is a unique connected component $\Omega'$ of $\Supp(B \subs S'. \sups \geq 1.)$ such that $y \in \alpha(\Omega')$.
By construction, $\Nklt(\overline{X},\overline{B},\mathbf{M})$ is connected over $\Omega'$, as any prime divisor $\overline P \subset \overline{X}$ such that $\tau(\overline{P}) \subset \Omega'$ satisfies $\mu \subs \overline{P}. \overline{B} \geq 1$.
\\
Let $\hat X$ be a common resolution of $X$ and $\overline X$.
Then, by Proposition~\ref{p.conn.gen.pairs} applied to $\phi \colon \hat X \rar X$ and $\overline{\phi} \colon  \hat X \rar \overline{X}$, the connected components of $\Nklt(X,B,\mathbf M)$ and of $\Nklt(\overline{X},\overline{B}, \mathbf{M})$ are in bijection, as these two generalized pair are crepant to each other; in fact, the connected components of $\Nklt(X,B,\mathbf M)$ and of $\Nklt(\overline{X},\overline{B}, \mathbf M)$ are in  1-1 correspondence with the components of $\Nklt(\hat X, \hat B, \bM.)$, where $(\hat X, \hat B, \bM.)$ denotes the trace of $(X,B, \bM.)$ on $\hat X$.
This leads to the sought contradiction.
Indeed, as $\alpha \colon S' \rar S$ has connected fibers, $\alpha \sups -1.(s) \subset \Omega '$ is connected.
As also $\tau \colon \overline{X} \rar S'$ has connected fibers, then $\tau \sups -1.(\alpha \sups -1.(s))$ is connected and, by construction, $\tau \sups -1.(\alpha \sups -1.(s)) \subset \Supp(\overline{B}\sups \geq 1.)$.
Applying Proposition~\ref{p.conn.gen.pairs} to $\overline \phi \colon \hat X \rar \overline{X}$, then $\overline{\phi} \sups -1. (\tau \sups -1.(\alpha \sups -1.(s))) \cap \Nklt(\hat X, \hat B ,\bM.)$ is connected.
On the other hand, by a similar argument,  it follows that $\phi \sups -1. (f \sups -1.(s)) \cap \Nklt(\hat X,\hat B,\bM.)$ is disconnected, since $f \sups -1.(s) \cap \Nklt(X,B,\bM.)$ is disconnected, by Proposition \ref{p.conn.gen.pairs} applied to $\phi \colon \hat X \rar X$.
\end{proof}

\begin{proof}[Proof of Theorem~\ref{main theorem}]
By Theorem~\ref{p.conn.gen.pairs}, we may replace $X$ with a generalized dlt model $f^m \colon X^m  \rar X$; 
thus, we can assume that the generalized pair $(X, B \wedge \Supp(B), \mathbf{M})$ is a $\mathbb{Q}$-factorial generalized dlt pair and that $K_X+B+M \sim_{\qq,f} 0$. 
Remark~\ref{remark nklt in gdlt model} implies that $\Nklt(X,B , \mathbf M)= {\rm Supp} (B \sups \geq 1.)$.
Moreover, by~\cite{Kol13}*{\S~4.38}, passing to an \'etale neighborhood of $s \in S$ we can assume that 
\begin{center}
{\bf ($\ast$)} {\it different connected components of $\Nklt(X,B, \mathbf M)\cap \pi \sups -1. (s)$ are contained\\ 
in different connected components of $\Nklt(X,B, \mathbf M)$}.
\end{center}
Under these assumption, we shall show that $X$ is a $\mathbb{P}^1$-link over $S$.
Moreover, by Proposition~\ref{key prop} we can assume that at least one component of $B \sups \geq 1.$ dominates $S$. 
Hence, $\K X. + B - \epsilon B \sups \geq 1. + M$ is not pseudo-effective over $S$, for any $\epsilon >0$. 
As in addition $(X, B \sups <1. + M)$ is generalized klt, we can run a $(K_X+ B \sups <1. +M)$-MMP over $S$
\begin{equation}
\label{mmp.diag.proof1}
\xymatrix{
X =X_0 \ar@{-->}[r]^{\pi_1}  \ar[rrd]_{f_0(=f)} &X_1 \ar@{-->}[r]^{\pi_2}  \ar[rd]^{f_1}&\dots \ar@{-->}[r]^{\pi_n} & X_n  \ar[r]^h \ar[ld]_{f_n} & Z \ar[lld]^{g}\\
& & S & &
}
\end{equation}
which terminates with a Mori fiber space, $h\colon X_n  \rar Z$ over $S$, cf.~\cite{BZ16}*{Lemma~4.4}.
At each step of the MMP in~\eqref{mmp.diag.proof1}, we define $B_k \coloneqq  \pi_{k \ast} B_{k-1}$ and $M_k \coloneqq  \mathbf{M}_{X_k}$, where $B_0=B$; 
hence, $K_{X_k}+B_k+M_k \sim_{\qq,f_k} 0$. 
Applying Lemmata~\ref{conn.div.contr.lemma} and~\ref{flop.nklt.lem} at a given step $\pi_k$ of~\eqref{mmp.diag.proof1}, the number of connected components of $\Nklt(X_{k-1}, B_{k-1}, \mathbf M)$ in a neighborhood of $f^{-1}_{k-1}(s')$, $s' \in S$, is the same as the number of connected components of $\Nklt(X_{k}, B_{k}, \mathbf M)$ around $f^{-1}_{k}(s')$. 
Moreover, while for $k>0$ the support of $\Nklt(X_k,B_k, \mathbf M)$ does not necessarily coincide anymore with $B_k \sups \geq 1.$, it still holds that $\Supp(B_k \sups \geq 1.) \subset \Nklt(X_k,B_k, \mathbf M)$ and every irreducible component of $\Nklt(X_k,B_k, \mathbf M)$ contains at least one component of $B_k \sups \geq 1.$, since at each step of this run of the MMP $B_k \sups \geq 1.$ has positive intersection with the contracted extremal ray.
\newline
Hence, if $\Nklt(X, B, \mathbf M)$ is disconnected in a neighborhood of the fiber $f^{-1}(s)$, then so is $\Nklt(X_n, B_n, \mathbf M)$ in a neighborhood of $f_{n}^{-1}(s)$. 
As $h \colon X_n \rar Z$ is a Mori fiber space and $B_n \sups \geq 1.$ is ample over $Z$, there exists at least one component $\widetilde{D}$ of $B_n \sups \geq 1.$ ample over $Z$; 
thus, $\widetilde{D}$ dominates both $Z$ and $S$. 
Let $D'$
be any other 
component of $B_n \sups \geq 1.$ in a neighborhood of $f_{n}^{-1}(s)$. 
As $\widetilde D$ is ample over $Z$, in particular, it is horizontal over $Z$, hence, as $h$ is a Mori fiber space, then also $D'$ must be ample
over $Z$, otherwise, $D' \supseteq f^{-1}(s)$ and $D' \cap \widetilde{D} \cap f_n^{-1}(s) \neq \emptyset$ which would prompt a contradiction.
Then, $D'$ dominates $Z$ and it is $h$-ample.
Hence, we may argue as in~\cite{Kol13}*{proof of Proposition~4.37}.
In particular, all the reduced fibers of $h$ are smooth rational curves, and $\tilde D$ and $\tilde D '$ are disjoint sections of $h$.
Thus, as $(\tilde D +\tilde{D}') \cdot F=2$ for a general fiber $F$ of $h$, it follows that $B_n$ has to have coefficient one along $\tilde D$ and  $\tilde{D}'$.
Furthermore, every other component of $B_n$ is vertical for $h$.
Since we are assuming that $\Nklt(X_n,B_n,\bM.) \cap f_n^{-1}(s)$ is disconnected, the vertical components of $B_n \sups \geq 1.$ have to be disjoint from $f_n^{-1}(s)$.
Hence, up to shrinking around $s \in S$, $B_n \sups \geq 1.=\tilde D +\tilde{D}'$.
\newline
By construction of $X \drar X_n$ and the fact that $X_n$ is $\qq$-factorial, it follows that $(X_n,B_n \sups <1.)$ is a klt pair.
Thus, it follows from~\cite{Kol13}*{Proposition 4.37} and its proof that $(X_n,B_n) \rar Z$ is a standard $\pr 1.$-link.
Thus, since $\K X_n. + B_n \sim \subs \qq,h. 0$, it follows that $M_n \sim \subs \qq,h. 0$.
In particular, conditions (0), (1), (3), and (5) of Definition~\ref{def p1 link} are satisfied.

In order to show that also condition (4) in Definition~\ref{def p1 link} is satisfied,
it suffices to show that $\tilde D$ and $\tilde D '$ are the only generalized log canonical centers of $(X_n,B_n,\bM.)$.
Assume by contradiction that it is not the case.
Then, as $(X_n,B_n)$ is plt with two log canonical centers, there exists $\alpha \in (0,1]$ so that $(X_n,B_n, \alpha \bM.)$ is generalized log canonical and has three or more generalized log canonical centers.
What we have shown so far in particular implies that the only divisorial components of $\Nklt(X_n,B_n,\bM.)$ are $\tilde D$ and $\tilde D '$.
Therefore, since $X_n \rar Z$ has relative dimension 1, it follows that $\tilde D$ and $\tilde D '$ are the only log canonical centers of $(X_n,B_n, \alpha \bM.)$ that dominate $Z$.
Let $\phi \colon X_n' \rar X_n$ be a generalized dlt model for $(X_n,B_n, \alpha \bM.)$, and let $(X_n',B_n'+\alpha M_n')$ denote its trace on $X_n'$.
Since $M_n \sim \subs \qq,h. 0$, we have $\K X_n . + B_n + \alpha M_n \sim \subs \qq,h.0$.
Hence, we have $\K X_n'. + B_n' + \alpha M_n' \sim \subs \qq,Z. 0$.
By construction, we have $B_n' \geq 0$, as $\phi$ only extracts divisors with generalized log discrepancy 0.
Now, let $E'$ be a component of $(B_n') \sups =1.$ that is not $\tilde D$ nor $\tilde D '$; such divisor exists by the absurd assumption.
Since $0 < \alpha \leq 1$, we have $\Nklt(X_n,B_n,\alpha \bM.) \subset \Nklt(X_n,B_n,\bM.)$.
Since $\tilde D$ and $\tilde D '$ belong to disjoint connected components of $\Nklt(X_n,B_n,\bM.)$,
at least one among $\tilde D$ and $\tilde D '$ belongs to a connected component of $\Nklt(X'_n,B'_n,\alpha \bM.)$ that is disjoint from the component containing $E'$.
Up to swapping the roles, we may assume $E' \cap \tilde D = \emptyset$ and that these belong to different connected components of $\Nklt(X_n',B_n',\alpha \bM.)$, where we identify $\tilde D$ with its strict transform on $X_n'$.
Thus, $\K X_n'. + (B_n'- \tilde D - E')+ \alpha M_n' \sim \subs \qq,Z. -\tilde D - E'$ is not pseudo-effective over $Z$, as $\tilde D$ dominates $Z$.
Then, by~\cite{BZ16}*{Lemma 4.4}, we may run a $(\K X_n'. + (B_n'- \tilde D - E')+ \alpha M_n')$-MMP over $Z$, which terminates with a Mori fiber space $\hat X _n \rar \hat Z$ over $Z$.
Since $X_n \rar Z$ has relative dimension 1, $\hat Z \rar Z$ is birational.
Arguing as in the first part of the proof, we know that distinct connected components of $\Nklt(X'_n,B'_n,\alpha \bM.)$ have to remain disjoint after the run of the MMP.
Therefore, as the MMP is positive for $\tilde D + E'$ and these divisors need to remain disjoint, these two divisors are not contracted by $X_n' \drar \hat X _n$.
Call $\hat D$ and $\hat E$ the corresponding strict transforms on $\hat X$.
Since $\hat X _n \rar \hat Z$ is a Mori fiber space and $\hat D$ is relatively big, it is relatively ample.
Since $\hat X _n \rar \hat Z$ has relative dimension 1, $\hat D$ is horizontal and $\hat D \cap \hat E = \emptyset$, then $\hat E$ has to be horizontal over $\hat Z$.
This is absurd, as its corresponding center on $X$ does not dominate $Z$.

Finally, Lemma~\ref{condition.2.lemma} implies that condition (2) in Definition~\ref{def p1 link} holds.
\end{proof}

\begin{proof}[Proof of Theorem~\ref{Theoremp1 link}]
We follow the proof of~\cite{Kol13}*{Theorem~4.40} and divide the proof into 2 steps.
\\

\textbf{Step 1:} 
{\it 
In this step  we prove the statement of the theorem over an \'etale neighborhood $(s' \in S') \rar (s \in S)$ such that $k(s) \simeq k(s')$.}
\\
We proceed by induction on $\dim (X)$ and $\dim (Z)$.
If $f \sups -1.(s) \cap \lfloor B \rfloor$ is disconnected, then, by Theorem~\ref{main theorem}, after an \'etale base change $(s' \in S') \rar (s \in S)$ there are exactly two generalized log canonical centers intersecting the fiber over $s'$, and they are $\pr 1.$-linked.
Thus, the claim follows.
\newline
Now, we can assume that $f \sups -1. (s) \cap \lfloor B \rfloor$ is connected.
Write $\lfloor B \rfloor = \sum D_i$, where each $D_i$ is a prime Weil divisor.
Then, up to an \'etale base change that does not change $k(s)$~\cite{Kol13}*{\S~4.38}, we can assume that each $D_i$ has connected fibers over $s$, and that every generalized log canonical center of $(X,B+M)$ intersects $f \sups -1. (s)$.
By the connectedness of $f \sups -1.(s) \cap \lfloor B \rfloor$, up to reordering, we can assume that $Z \subset D_1$, $W \subset D_r$, and $f \sups -1. (s) \cap D_i \cap D \subs i+1. \neq \emptyset$ for $i=1,\ldots , r-1$.
\newline
By induction on the dimension, we may apply Theorem~\ref{Theoremp1 link} to $D_1 \rar S$ with $Z$ as minimal generalized log canonical center and $D_1 \cap D_2$ as the other center.
It follows that there is a generalized log canonical center $Z_2 \subset D_1 \cap D_2$ that is $\pr 1.$-linked to $Z$.
By Remark~\ref{remark minimality}, $Z_2$ is also minimal with respect to inclusion among the generalized log canonical centers of $(X,B+M)$ that intersect $f \sups -1.(s)$.
Let $(D_1,B_1 + M_1)$ and $(D_2,B_2+M_2)$ denote the generalized pairs induced by generalized adjunction on $D_1$ and $D_2$ respectively.
Notice that $Z_2$ is a generalized log canonical center of $(D_1,B_1+M_1)$.
Then, by the generalized dlt property and generalized adjunction~\cite{Bir16a}{\S~3.1}, it follows that $Z_2$ is a generalized log canonical center also for $(X,B+M)$ and $(D_2,B_2+M_2)$.
To conclude, we apply this argument inductively to consecutive prime component $D_i$ and $D \subs i+1.$, until we have $i+1 = r$.
This process produces a minimal generalized log canonical center $Z_r \subset D_r$ which is $\pr 1.$-linked to $Z$.
Since $Z_r$ may not be contained in $W$, we apply the inductive hypothesis to the morphism $D_r \rar S$ with $Z_r$ and $W$ as the centers involved.
This process produced a new generalized log canonical center $Z_W \subset W$ with the claimed properties.
\\

\textbf{Step 2:} 
{\it We prove that the \'etale base change is not necessary}.
\newline
Let $g \colon X \rar T$ be the Stein factorization of $f$, and let $t \in T$ be the unique preimage of $s$ in $T$.
Let $Z_1, \ldots, Z_k$ be the minimal log canonical centers with respect to inclusion such that $s \in f(Z_i)$.
Generalized log canonical centers commute with \'etale base change.
Thus, by the previous step, all the $Z_i$ are $\pr 1.$-linked to each other after a suitable base change.
Therefore, there is a unique subvariety $V \subset T$ such that $g(Z_i) = V$ for every $i$.
\newline
Let $v \in T$ be the generic point of $V$.
Since $g$ has connected fibers, we can apply the Step 1 to $g \colon (X,B+M) \rar T$ and $v$.
Thus, we get an \'etale base change $\pi \colon (v' \in T') \rar (v \in T)$ that induces an isomorphism
\begin{equation} 
\label{equation canonical iso}
\pi \colon (g') \sups -1.(v') \simeq g \sups -1.(v).
\end{equation}
Thus, each $Z_i$ is canonically isomorphic to a minimal generalized log canonical center $Z'_i \subset X \times_T T'$.
The centers $Z'_i$ are $\pr 1.$-linked to each other by Step 1.
By~\eqref{equation canonical iso}, the $\pr 1.$-links descend to $\pr 1.$-links between the $Z_i$.
\end{proof}

\begin{proof}
[Proof of Theorem~\ref{dual.main.thm}]
By Theorem~\ref{dual complex equidimensional}, we know that $\mathcal{DMR}(X,B, \mathbf M)$ is equidimensional.
If the dual complex is contractible to a point, there is nothing to prove.
Otherwise, the observation in Remark~\ref{remark reduction} implies that the result follows from the analogous result for log pairs proved in~\cite{KX16}.
\end{proof}

\begin{proof}
[Proof of~\ref{thm non lc}]
Without loss of generality, we can assume that $X$ is $\qq$-factorial, and that $(X,\Delta,\mathbf M)$ is generalized dlt, where $\Delta \coloneqq B \wedge \Supp(B)$.
As $(X,B,\mathbf M)$ is not generalized log canonical, then $B - \Delta > 0$ and $\K X. + \Delta + \mathbf M _X$ is not pseudo-effective.
Hence, we can run a $(\K X. + \Delta + \mathbf M _X)$-MMP with scaling of an ample divisor $H$
\begin{align}
\label{eqn.proof.1_7}
\xymatrix{
X=X_0 \ar@{-->}[r] & X_1 \ar@{-->}[r] & \dots \ar@{-->}[r]& X_{n-1} \ar@{-->}[r] & X_n
}, 
\end{align}
which terminates with a Mori fiber space $g \colon X_n\rar Z$.
This MMP is also a $-(B - \Delta)$-MMP.
We shall denote by $\Gamma_i$ the strict transform on $X_i$ of a divisor $\Gamma$ on $X$.
\newline
Let $R_i$ be the extremal ray corresponding to the $i$-th step $X_{i-1} \drar X \subs i.$ of~\eqref{eqn.proof.1_7}.
Thus, we have $(B_i - \Delta_i) \cdot R_i > 0$ and there exists a prime component $D_i$ of $\Supp(B_i \sups > 1.) \subset \Supp (B_i \sups \geq 1.)$ satisfying $D_i \cdot R_i > 0$.
\\

{\bf Claim}. {\it For all $i$, $\mathcal{D}(B_i \sups \geq 1.)$ and $\mathcal{D}(B \subs i+1. \sups \geq 1.)$ are simple homotopy equivalent.}
\begin{proof}[Proof of Claim]
As the MMP in~\eqref{eqn.proof.1_7} terminates, there exists $0 < \epsilon \ll 1$ such that MMP is also a run of the $(K_X+\Delta+(\mathbf{M}_X+\epsilon {H}))$-MMP.
In particular, at each step of~\eqref{eqn.proof.1_7}, $H_i$ is a big divisor and $\mathbb{B}_+(H_i)$ does not contain any generalized log canonical center of $(X_i, \Delta_i, \mathbf{M})$.
Since the generalized dlt property is preserved under the steps of the MMP in~\eqref{eqn.proof.1_7}, see Definition~\ref{def dual complex nonlc}, for all $i$, $\mathcal{DMR}(X_i,B_i,\mathbf M) = \mathcal D (B_i \sups \geq 1.)$.
Hence, the claim is a direct consequence of Lemma~\ref{aux.dlt.lemma1} and~\cite{dFKX}*{Theorem~19}, since, as we noted above, there exists a prime component $D_i$ of $\Supp(\Delta_i)$ such that $D_i \cdot R_i >0$.
\end{proof}
\noindent
The claim implies that 
\begin{align*}
\mathcal{DMR}(X,B,\mathbf M) = \mathcal{DMR}(X_n,B_n,\mathbf M).
\end{align*}
As $\mathbf M \subs X_n.$ is pseudo-effective and $g$ is a Mori fiber space, then $-(\K X_n. + \Delta_n)$ is $g$-ample; 
as $\mathcal D (B_n \sups \geq 1.)= \mathcal D (\Delta_n \sups =1.)$, we can apply~\cite{Nak19}*{Lemma~3.1} to conclude that $\mathcal D (\Delta_n \sups =1.)=\mathcal{DMR}(X,B,\mathbf M)$ is contractible.
\end{proof}
\begin{lemma}
\label{aux.dlt.lemma1}
Let $(X,B,\bM.)$ be a $\qq$-factorial generalized dlt pair.
Let $H$ be a big divisor such that $\mathbb{B}_+(H)$ does not contain any generalized lc center of $(X,B,\bM.)$.
Then, for any $0<\epsilon \ll 1$, there exists an effective divisor $\Gamma_\epsilon$ such that $(X,B+\Gamma_\epsilon)$ is a dlt pair,
\begin{align*}
\K X. + B + \bM X. + \epsilon H \sim_\qq \K X. + B + \Gamma_\epsilon,
\end{align*}
and $\Supp(B \sups =1.)=\Nklt(X,B,\bM.)=\Nklt(X,B+\Gamma_\epsilon)$.
Furthermore, the dual complexes of the pair $(X,B+\Gamma_\epsilon)$ and of the generalized pair $(X,B,\bM.)$ agree. 
\end{lemma}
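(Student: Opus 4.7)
The strategy is to construct $\Gamma_\epsilon$ as the pushforward of a sufficiently general ample member on a model where $\mathbf{M}$ descends, plus an error term from a decomposition of $H$. Since $\mathbb{B}_+(H)$ contains no generalized lc center of $(X,B,\bM.)$, I can write $H \sim_\qq A + N$ with $A$ an ample $\qq$-divisor on $X$ and $N \geq 0$ whose support avoids every generalized lc center of $(X,B,\bM.)$. Let $\pi \colon X' \rar X$ be a log resolution of $(X, B + N)$ on which $\mathbf{M}$ descends; then $\mathbf{M}_{X'}$ is nef and $\mathbf{M}_{X'} + \epsilon \pi^\ast A$ is ample for every $\epsilon > 0$. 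For a sufficiently divisible integer $m$, $|m(\mathbf{M}_{X'} + \epsilon \pi^\ast A)|$ is basepoint-free; I pick a general member $D_m$, set $P'_\epsilon \coloneqq \tfrac{1}{m} D_m$, and define
\[
\Gamma_\epsilon \coloneqq \pi_\ast P'_\epsilon + \epsilon N.
\]
By Bertini, $D_m$ is smooth, meets $B' + \pi^\ast N + \mathrm{Exc}(\pi)$ transversally, and avoids the generic point of every generalized lc place of $(X,B,\bM.)$ (realized on $X'$).

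\textbf{$\qq$-linear equivalence and non-klt locus.} Since $\pi_\ast P'_\epsilon \sim_\qq \pi_\ast \mathbf{M}_{X'} + \epsilon A = \bM X. + \epsilon A$, one has $\Gamma_\epsilon \sim_\qq \bM X. + \epsilon H$, yielding the required equivalence $\K X. + B + \bM X. + \epsilon H \sim_\qq \K X. + B + \Gamma_\epsilon$. For $\epsilon$ and $1/m$ small enough, every coefficient of $\Gamma_\epsilon$ is strictly less than $1$, and by construction $\Supp(\Gamma_\epsilon)$ shares no component with $\Supp(B^{=1})$, so $(B+\Gamma_\epsilon)^{=1} = B^{=1}$; hence $\Supp(B^{=1}) \subseteq \Nklt(X, B+\Gamma_\epsilon)$. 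The reverse inclusion follows because $(X,B,\bM.)$ is generalized klt away from $\Supp(B^{=1})$ (a consequence of generalized dlt-ness, since every generalized lc center is contained in $\Supp(B^{=1})$), and the small effective perturbation $\Gamma_\epsilon$ preserves this generalized klt-ness outside $\Supp(B^{=1})$ for $\epsilon, 1/m \ll 1$.

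\textbf{Dlt-ness and dual complex.} At the generic point $\eta$ of any stratum of $B^{=1}$, generalized dlt-ness provides both log smoothness of $(X,B)$ near $\eta$ and the equality $\pi^\ast \bM X. = \mathbf{M}_{X'}$ in a neighborhood of $\pi^{-1}(\eta)$. Transversality of $\Gamma_\epsilon$ to the snc structure at $\eta$ then implies that $(X, B+\Gamma_\epsilon)$ is log smooth near $\eta$; combined with log canonicity, this yields the dlt property. Moreover, the lc places of $(X, B+\Gamma_\epsilon)$ lying over $\eta$ are exactly the generalized lc places of $(X,B,\bM.)$ over $\eta$, since the only valuations with log discrepancy $\leq 0$ remain those captured by the snc structure of $B^{=1}$. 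Therefore both $(X, B+\Gamma_\epsilon)$ and $(X,B,\bM.)$ share the same structure of lc places, and their dual complexes coincide.

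\textbf{Main obstacle.} The delicate point is controlling the two effective exceptional correction divisors $F \coloneqq \pi^\ast \bM X. - \mathbf{M}_{X'} \geq 0$ and $E' \coloneqq \pi^\ast \pi_\ast P'_\epsilon - P'_\epsilon \geq 0$, both forced to be effective by the negativity lemma; these enter the log pullback of $\K X. + B + \Gamma_\epsilon$ and could a priori push the coefficient of some $\pi$-exceptional divisor up to or beyond $1$, spuriously creating a new lc center of $(X, B+\Gamma_\epsilon)$. The argument is saved by three observations: $F$ vanishes in a neighborhood of every generalized lc center since $\mathbf{M}$ descends to $X$ there; $E'$ has coefficients of order $1/m$ and can thus be made arbitrarily small; and the generalized klt-ness of $(X,B,\bM.)$ away from $\Supp(B^{=1})$ provides a uniform strictly positive lower bound on generalized log discrepancies at exceptional divisors with center outside $\Supp(B^{=1})$, which no sufficiently small perturbation can overcome.
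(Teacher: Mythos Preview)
There is a genuine gap in your construction. You assert that on the log resolution $X'$ the divisor $\mathbf{M}_{X'} + \epsilon \pi^\ast A$ is ample, and this is what lets you choose a general member $P'_\epsilon$ via Bertini. But $\pi^\ast A$ is only big and nef on $X'$ --- it is numerically trivial on every curve contracted by $\pi$ --- and $\mathbf{M}_{X'}$ is only nef, so their sum is big and nef but in general not ample. Without ampleness (or at least semi-ampleness, which is also not automatic here), the linear system $|m(\mathbf{M}_{X'}+\epsilon\pi^\ast A)|$ may have base locus along the exceptional set, and your transversality claims for $P'_\epsilon$ collapse. A related symptom: your assertion that $E'=\pi^\ast\pi_\ast P'_\epsilon - P'_\epsilon$ has coefficients ``of order $1/m$'' is not justified; the exceptional multiplicities in $\pi^\ast\pi_\ast P'_\epsilon$ are governed by how $\pi_\ast P'_\epsilon$ meets the blown-up centers, not by $1/m$ alone.

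The paper avoids this by two devices. First, it does not take a full log resolution: invoking \cite{KM98}*{Lemma~2.45}, it produces a model $\pi\colon X'\rar X$ that is an \emph{isomorphism} over the complement of a closed set $Z$ containing no generalized lc center and on which $\mathbf{M}$ descends; this confines all exceptional corrections away from the lc centers from the start. Second, it builds an honestly ample divisor $H'$ on $X'$ with $\pi_\ast H'$ ample on $X$, so that $\mathbf{M}_{X'}+\epsilon H'$ is genuinely ample and Bertini applies; the exceptional defect $F'=\pi^\ast(\pi_\ast H')-H'$ is then effective, $\pi$-exceptional, and supported over $Z$, hence away from every lc center, so adding $\epsilon F'$ cannot create new lc places. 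Your outline has the right shape --- decompose $H$ using $\mathbb{B}_+(H)$, pass to a model where $\mathbf{M}$ descends, pick a general ample representative --- but it needs these two corrections to go through.
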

\begin{proof}
As $\mathbb{B}_+(H)$ does not contain any generalized log canonical center of $(X,B,\bM.)$, we can write $H \sim_\mathbb{Q} A+E$, where $A$ is ample and $E$ effective in such a way that for $0 < \epsilon \ll 1$, $(X,B+\epsilon E,\bM.+\epsilon\overline{A})$ is still generalized dlt, its non-klt locus coincides with that of $(X, B, \bM.)$ and its dual complex coincides with that of $(X, B, \bM.)$.
In view of this, thus, it suffices to show that the conclusion of the lemma holds if we substitute $H$ with $A$.
Hence, we shall assume that $H$ is an ample divisor.
\\
As $(X,B,\bM.)/\cc$ is generalized dlt, $\bM .$ descends to a neighborhood of the generic point of each generalized log canonical center.
In particular, there exists a closed subset $Z \subset X$ so that $\bM.$ descends to $X \setminus Z$ and no generalized log canonical center is contained in $Z$.
By~\cite{KM98}*{Lemma~2.45}, we can find a birational morphism $\pi \colon X' \rar X$ from a normal birational model $X'$ so that $\pi$ is an isomorphism over $X \setminus Z$ and $\bM.$ descends to $X'$.
We write $(X',B',\bM.)$ for the trace of $(X,B,\bM.)$ on $X'$.
\\
As $X$ is $\qq$-factorial, we can find an ample divisor $H'$ on $X'$ so that $H \coloneqq \pi_*(H')$ is ample on $X$ and $\pi^\ast (H)-H'=F' \geq 0$ is $\pi$-exceptional\footnote{Let $A'$ be an ample divisor on $X'$, and let $D$ be an ample divisor on $X$.
Then, for every $\delta>0$, $\pi^\ast (D)+\delta A'$ is ample.
As ampleness is an open condition, $D+\delta \pi_*(A')$ is ample for $0 < \delta \ll 1$.
We fix an ample divisor $H'$ on $X'$ such that $H\coloneqq \pi_*(H')$ is ample.
By the negativity lemma~\cite{KM98}*{Lemma~3.39}, $\pi^\ast (H)-H'=F' \geq 0$, where $F'$ is $\pi$-exceptional.
As $\bM X'.$ is nef and $H'$ is ample on $X'$, then $\bM X'. + \epsilon H'$ is ample.}.
As $\pi(\Supp(F')) \subset Z$ and $Z$ does not contain any generalized log canonical center of $(X,B,\bM.)$, $\Supp(F')$ does not contain any generalized log canonical center of $(X',B',\bM.)$.
In particular, for $0 < \epsilon \ll 1$, we have 
\begin{equation} \label{eq_nklt}
\Nklt(X',B') = \Nklt(X',B',\bM.) = \Nklt(X',B'+\epsilon F',\bM.)=\Nklt(X',B'+\epsilon F'),
\end{equation}
where the first and last equalities comes from the fact that $\bM.$ descends to $X'$.
As the equalities in~\eqref{eq_nklt} also hold for each generalized log canonical place, then adding $\epsilon F$ does not introduce new generalized log canonical places and preserves the generalized sub-log canonical property, so that 
\begin{align*}
\mathcal{D}((B') \sups = 1.) = \mathcal{D}((B' +\epsilon F') \sups = 1.)
\end{align*}
Let us fix $0 < \epsilon \ll  1$ so that the properties just discussed hold.
As $\bM X'. + \epsilon H'$ is ample, there exists $\Gamma'_\epsilon \sim_\qq \bM X'. + \epsilon H'$ such that $\Nklt(X',B'+\epsilon F')=\Nklt(X',B'+\Gamma'_\epsilon + \epsilon F')$, and adding $\Gamma'_\epsilon$ does not introduce any new log canonical places.
Moreover, by construction,
\begin{align*}
\K X'. + B' + \Gamma'_\epsilon + \epsilon F' \sim_\qq \pi^\ast (\K X. + B + \bM X. + \epsilon H), \ \text{and}\\
\mathcal{D}((B') \sups = 1.) = 
\mathcal{D}((B' +\epsilon F') \sups = 1.) =
\mathcal{D}((B' +\Gamma_\epsilon'+\epsilon F') \sups = 1.).
\end{align*}
Defining $\Gamma_\epsilon \coloneqq \pi_*(\Gamma'_\epsilon) = \pi_*(\Gamma'_\epsilon + \epsilon F')$, the pair $(X,B+\Gamma_\epsilon)$ satisfies the claims of the statement.
\end{proof}

\begin{bibdiv}
\begin{biblist}

\bib{Amb99}{misc}{
 author = {Ambro, Florin},
 title={The adjunction conjecture and its applications},
 year = {1999},
 eprint = {arXiv:9903060},
 note = {arXiv e-print, https://arxiv.org/abs/math/9903060v3},
}

\bib{Amb04}{article}{
   author={Ambro, Florin},
   title={Shokurov's boundary property},
   journal={J. Differential Geom.},
   volume={67},
   date={2004},
   number={2},
   pages={229--255},
   issn={0022-040X},
   review={\MR{2153078}},
}

\bib{Amb05}{article}{
   author={Ambro, Florin},
   title={The moduli $b$-divisor of an lc-trivial fibration},
   journal={Compos. Math.},
   volume={141},
   date={2005},
   number={2},
   pages={385--403},
   issn={0010-437X},
   review={\MR{2134273}},
   doi={10.1112/S0010437X04001071},
}

\bib{BCHM}{article}{
   author={Birkar, Caucher},
   author={Cascini, Paolo},
   author={Hacon, Christopher D.},
   author={M\textsuperscript{c}{K}ernan, James},
   title={Existence of minimal models for varieties of log general type},
   journal={J. Amer. Math. Soc.},
   volume={23},
   date={2010},
   number={2},
   pages={405--468},
   issn={0894-0347},
   review={\MR{2601039}},
}

\bib{Bir16a}{article}{
   author={Birkar, Caucher},
   title={Anti-pluricanonical systems on Fano varieties},
   journal={Ann. of Math. (2)},
   volume={190},
   date={2019},
   number={2},
   pages={345--463},
   issn={0003-486X},
   review={\MR{3997127}},
   doi={10.4007/annals.2019.190.2.1},
}

\bib{B20}{misc}{
Author = {Birkar, Caucher},
Title = {On connectedness of non-klt loci of singularities of pairs},
Year = {2020},
Eprint = {arXiv:2010.08226},
note={arXiv e-print, arXiv:2010.08226v1. Accepted for publication in {\it J. Differential Geom.}.},
}

\bib{BZ16}{article}{
   author={Birkar, Caucher},
   author={Zhang, De-Qi},
   title={Effectivity of Iitaka fibrations and pluricanonical systems of
   polarized pairs},
   journal={Publ. Math. Inst. Hautes \'Etudes Sci.},
   volume={123},
   date={2016},
   pages={283--331},
   issn={0073-8301},
   review={\MR{3502099}},
}

\bib{dFKX}{article}{
   author={de Fernex, Tommaso},
   author={Koll\'{a}r, J\'{a}nos},
   author={Xu, Chenyang},
   title={The dual complex of singularities},
   conference={
      title={Higher dimensional algebraic geometry---in honour of Professor
      Yujiro Kawamata's sixtieth birthday},
   },
   book={
      series={Adv. Stud. Pure Math.},
      volume={74},
      publisher={Math. Soc. Japan, Tokyo},
   },
   date={2017},
   pages={103--129},
   review={\MR{3791210}},
}

\bib{FG14}{article}{
   author={Fujino, Osamu},
   author={Gongyo, Yoshinori},
   title={On the moduli b-divisors of lc-trivial fibrations},
   language={English, with English and French summaries},
   journal={Ann. Inst. Fourier (Grenoble)},
   volume={64},
   date={2014},
   number={4},
   pages={1721--1735},
   issn={0373-0956},
   review={\MR{3329677}},
}

\bib{Fil19}{misc}{
author = {Filipazzi, Stefano},
title={Generalized pairs in birational geometry},
year = {2019},
note = {PhD thesis, University of Utah},
}

\bib{Fil18}{article}{
   author={Filipazzi, Stefano},
   title={On a generalized canonical bundle formula and generalized adjunction},
   journal={Ann. Sc. Norm. Super. Pisa Cl. Sci. (5)},
   volume={21},
   date={2020},
   pages={1187--1221},
   doi={10.2422/2036-2145.201810\_001},
}

\bib{Fuj00}{article}{
   author={Fujino, Osamu},
   title={Abundance theorem for semi log canonical threefolds},
   journal={Duke Math. J.},
   volume={102},
   date={2000},
   number={3},
   pages={513--532},
   issn={0012-7094},
   review={\MR{1756108}},
   doi={10.1215/S0012-7094-00-10237-2},
}

\bib{Fuj11}{article}{
   author={Fujino, Osamu},
   title={Semi-stable minimal model program for varieties with trivial
   canonical divisor},
   journal={Proc. Japan Acad. Ser. A Math. Sci.},
   volume={87},
   date={2011},
   number={3},
   pages={25--30},
   issn={0386-2194},
   review={\MR{2802603}},
}

\bib{GHS03}{article}{
   author={Graber, Tom},
   author={Harris, Joe},
   author={Starr, Jason},
   title={Families of rationally connected varieties},
   journal={J. Amer. Math. Soc.},
   volume={16},
   date={2003},
   number={1},
   pages={57--67},
   issn={0894-0347},
   review={\MR{1937199}},
   doi={10.1090/S0894-0347-02-00402-2},
}

\bib{MR3165017}{article}{
   author={Hacon, Christopher D.},
   title={On the log canonical inversion of adjunction},
   journal={Proc. Edinb. Math. Soc. (2)},
   volume={57},
   date={2014},
   number={1},
   pages={139--143},
   issn={0013-0915},
   review={\MR{3165017}},
   doi={10.1017/S0013091513000837},
}

\bib{Har77}{book}{
   author={Hartshorne, Robin},
   title={Algebraic geometry},
   series={Graduate Texts in Mathematics, No. 52},
   publisher={Springer-Verlag, New York-Heidelberg},
   date={1977},
   pages={xvi+496},
   isbn={0-387-90244-9},
   review={\MR{0463157}},
}

\bib{Hat02}{book}{
   author={Hatcher, Allen},
   title={Algebraic topology},
   publisher={Cambridge University Press, Cambridge},
   date={2002},
   pages={xii+544},
   isbn={0-521-79160-X},
   isbn={0-521-79540-0},
   review={\MR{1867354}},
}

\bib{HH19}{article}{
   author={Hacon, Christopher D.},
   author={Han, Jingjun},
   title={On a connectedness principle of Shokurov-Koll\'{a}r type},
   journal={Sci. China Math.},
   volume={62},
   date={2019},
   number={3},
   pages={411--416},
   issn={1674-7283},
   review={\MR{3905556}},
   doi={10.1007/s11425-018-9360-5},
}

\bib{HK10}{book}{
   author={Hacon, Christopher D.},
   author={Kov\'{a}cs, S\'{a}ndor J.},
   title={Classification of higher dimensional algebraic varieties},
   series={Oberwolfach Seminars},
   volume={41},
   publisher={Birkh\"{a}user Verlag, Basel},
   date={2010},
   pages={x+208},
   isbn={978-3-0346-0289-1},
   review={\MR{2675555}},
   doi={10.1007/978-3-0346-0290-7},
}

\bib{HM07}{article}{
   author={Hacon, Christopher D.},
   author={M\textsuperscript{c}{K}ernan, James},
   title={On Shokurov's rational connectedness conjecture},
   journal={Duke Math. J.},
   volume={138},
   date={2007},
   number={1},
   pages={119--136},
   issn={0012-7094},
   review={\MR{2309156}},
   doi={10.1215/S0012-7094-07-13813-4},
}

\bib{kawakita}{article}{
   author={Kawakita, Masayuki},
   title={Inversion of adjunction on log canonicity},
   journal={Invent. Math.},
   volume={167},
   date={2007},
   number={1},
   pages={129--133},
   issn={0020-9910},
   review={\MR{2264806}},
   doi={10.1007/s00222-006-0008-z},
}

\bib{KK10}{article}{
   author={Koll\'{a}r, J\'{a}nos},
   author={Kov\'{a}cs, S\'{a}ndor J.},
   title={Log canonical singularities are Du Bois},
   journal={J. Amer. Math. Soc.},
   volume={23},
   date={2010},
   number={3},
   pages={791--813},
   issn={0894-0347},
   review={\MR{2629988}},
   doi={10.1090/S0894-0347-10-00663-6},
}

\bib{KM98}{book}{
   author={Koll\'ar, J\'anos},
   author={Mori, Shigefumi},
   title={Birational geometry of algebraic varieties},
   series={Cambridge Tracts in Mathematics},
   volume={134},
   note={With the collaboration of C. H. Clemens and A. Corti;
   Translated from the 1998 Japanese original},
   publisher={Cambridge University Press, Cambridge},
   date={1998},
   pages={viii+254},
   isbn={0-521-63277-3},
   review={\MR{1658959}},
}

\bib{KM99}{article}{
   author={Keel, Se\'{a}n},
   author={M\textsuperscript{c}{K}ernan, James},
   title={Rational curves on quasi-projective surfaces},
   journal={Mem. Amer. Math. Soc.},
   volume={140},
   date={1999},
   number={669},
   pages={viii+153},
   issn={0065-9266},
   review={\MR{1610249}},
   doi={10.1090/memo/0669},
}

\bib{Kol92}{book}{
   Editor = {Koll\'ar, J\'anos},
   title={Flips and abundance for algebraic threefolds},
   note={Papers from the Second Summer Seminar on Algebraic Geometry held at
   the University of Utah, Salt Lake City, Utah, August 1991;
   Ast\'{e}risque No. 211 (1992)},
   publisher={Soci\'{e}t\'{e} Math\'{e}matique de France, Paris},
   date={1992},
   pages={1--258},
   issn={0303-1179},
   review={\MR{1225842}},
}

\bib{Kol96}{book}{
   author={Koll\'{a}r, J\'{a}nos},
   title={Rational curves on algebraic varieties},
   series={Ergebnisse der Mathematik und ihrer Grenzgebiete. 3. Folge. A
   Series of Modern Surveys in Mathematics [Results in Mathematics and
   Related Areas. 3rd Series. A Series of Modern Surveys in Mathematics]},
   volume={32},
   publisher={Springer-Verlag, Berlin},
   date={1996},
   pages={viii+320},
   isbn={3-540-60168-6},
   review={\MR{1440180}},
   doi={10.1007/978-3-662-03276-3},
}

\bib{Kol13}{book}{
   author={Koll\'{a}r, J\'{a}nos},
   title={Singularities of the minimal model program},
   series={Cambridge Tracts in Mathematics},
   volume={200},
   note={With a collaboration of S\'{a}ndor Kov\'{a}cs},
   publisher={Cambridge University Press, Cambridge},
   date={2013},
   pages={x+370},
   isbn={978-1-107-03534-8},
   review={\MR{3057950}},
   doi={10.1017/CBO9781139547895},
}

\bib{KX16}{article}{
   author={Koll\'{a}r, J\'{a}nos},
   author={Xu, Chenyang},
   title={The dual complex of Calabi-Yau pairs},
   journal={Invent. Math.},
   volume={205},
   date={2016},
   number={3},
   pages={527--557},
   issn={0020-9910},
   review={\MR{3539921}},
   doi={10.1007/s00222-015-0640-6},
}

\bib{Lai11}{article}{
   author={Lai, Ching-Jui},
   title={Varieties fibered by good minimal models},
   journal={Math. Ann.},
   volume={350},
   date={2011},
   number={3},
   pages={533--547},
   issn={0025-5831},
   review={\MR{2805635}},
   doi={10.1007/s00208-010-0574-7},
}

\bib{MM86}{article}{
   author={Miyaoka, Yoichi},
   author={Mori, Shigefumi},
   title={A numerical criterion for uniruledness},
   journal={Ann. of Math. (2)},
   volume={124},
   date={1986},
   number={1},
   pages={65--69},
   issn={0003-486X},
   review={\MR{847952}},
   doi={10.2307/1971387},
}

\bib{Nak19}{article}{
   author={Nakamura, Yusuke},
   title={Dual complex of log Fano pairs and its application to Witt vector
   cohomology},
   journal={Int. Math. Res. Not. IMRN},
   date={2021},
   number={13},
   pages={9802--9833},
   issn={1073-7928},
   review={\MR{4283566}},
   doi={10.1093/imrn/rnz356},
}

\bib{Pro01}{book}{
   author={Prokhorov, Yuri G.},
   title={Lectures on complements on log surfaces},
   series={MSJ Memoirs},
   volume={10},
   publisher={Mathematical Society of Japan, Tokyo},
   date={2001},
   pages={viii+130},
   isbn={4-931469-12-4},
   review={\MR{1830440}},
}

\bib{raynaud_gruson}{article}{
   author={Raynaud, Michel},
   author={Gruson, Laurent},
   title={Crit\`eres de platitude et de projectivit\'{e}. Techniques de
   ``platification'' d'un module},
   language={French},
   journal={Invent. Math.},
   volume={13},
   date={1971},
   pages={1--89},
   issn={0020-9910},
   review={\MR{308104}},
   doi={10.1007/BF01390094},
}

\bib{MR1162635}{article}{
   author={Shokurov, V. V.},
   title={Three-dimensional log perestroikas},
   language={Russian},
   journal={Izv. Ross. Akad. Nauk Ser. Mat.},
   volume={56},
   date={1992},
   number={1},
   pages={105--203},
   issn={1607-0046},
   translation={
      journal={Russian Acad. Sci. Izv. Math.},
      volume={40},
      date={1993},
      number={1},
      pages={95--202},
      issn={1064-5632},
   },
   review={\MR{1162635}},
   doi={10.1070/IM1993v040n01ABEH001862},
}

\bib{SS}{article}{
   author={Spicer, C.},
   author={Svaldi, R.},
   title={Local and global applications of the Minimal Model Program for
   co-rank 1 foliations on threefolds},
   journal={J. Eur. Math. Soc. (JEMS)},
   volume={24},
   date={2022},
   number={11},
   pages={3969--4025},
   issn={1435-9855},
   review={\MR{4493618}},
   doi={10.4171/jems/1173},
}

\bib{Sva}{article}{
    AUTHOR = {Svaldi, Roberto},
     TITLE = {Hyperbolicity for log canonical pairs and the cone theorem},
   JOURNAL = {Selecta Math. (N.S.)},
    VOLUME = {25},
      YEAR = {2019},
    NUMBER = {5},
     PAGES = {Paper No. 67, 23},
      ISSN = {1022-1824},
       DOI = {10.1007/s00029-019-0512-9},
       URL = {https://doi.org/10.1007/s00029-019-0512-9},
}

\bib{Wit17}{article}{
   author={Witaszek, Jakub},
   title={On the canonical bundle formula and log abundance in positive characteristic},
   journal={Math. Ann.},
   volume={381},
   date={2021},
   number={3--4},
   pages={1309–-1344},
   issn={0025-5831},
   doi={10.1007/s00208-021-02231-5},
}

\end{biblist}
\end{bibdiv}

\end{document}